\documentclass[11pt,english]{extarticle}

\usepackage[latin9]{inputenc}
\usepackage{geometry}
\usepackage{color}
\usepackage{babel}
\usepackage{mathrsfs}
\usepackage{amsmath}
\usepackage{amsthm}
\usepackage{amssymb}
\usepackage[pdfusetitle,
 bookmarks=true,bookmarksnumbered=false,bookmarksopen=false,
 breaklinks=false,pdfborder={0 0 1},backref=false,colorlinks=true]
 {hyperref}
\hypersetup{
 pdfborderstyle=,linkcolor=blue,urlcolor=cyan,citecolor=red}

\makeatletter
\numberwithin{equation}{section}
\numberwithin{figure}{section}
\numberwithin{table}{section}
\usepackage{paralist}
\theoremstyle{plain}
\newtheorem{thm}{\protect\theoremname}[section]
\theoremstyle{plain}
\newtheorem{lyxalgorithm}[thm]{\protect\algorithmname}
\theoremstyle{definition}
\newtheorem{defn}[thm]{\protect\definitionname}
\theoremstyle{definition}
\newtheorem{rem}[thm]{\protect\remarkname}
\theoremstyle{plain}
\newtheorem{prop}[thm]{\protect\propositionname}
\theoremstyle{plain}
\newtheorem{cor}[thm]{\protect\corollaryname}
\theoremstyle{definition}
\newtheorem{example}[thm]{\protect\examplename}
\theoremstyle{plain}
\newtheorem{lem}[thm]{\protect\lemmaname}

\@ifundefined{date}{}{\date{}}
\usepackage{babel}
\usepackage{babel}
\usepackage{babel}
\usepackage{babel}
\usepackage{babel}
\usepackage{babel}
\usepackage{babel}
\usepackage{babel}
\usepackage{babel}
\usepackage{babel}
\usepackage{xpatch}
\usepackage{babel}
\usepackage{babel}
\usepackage{babel}

\usepackage[misc]{ifsym}

\usepackage{color}
\def\mystrut(#1,#2){\vrule height #1pt depth #2pt width 0pt}
\usepackage[normalem]{ulem}
\newcommand{\culine}{\bgroup\markoverwith
{\textcolor{red}{\rule[-0.5ex]{2pt}{0.5pt}}}\ULon}

\def\cdashuline{\bgroup
\UL@setULdepth
\markoverwith{\textcolor{red}{\kern.13em
\vtop{\kern\ULdepth \hrule width .3em}%
\kern.13em}}\ULon}
\def\cuuline{\bgroup \UL@setULdepth
\markoverwith{\textcolor{red}{\lower\ULdepth\hbox
{\kern-.03em\vbox{\hrule width.2em\kern1.2\p@\hrule}\kern-.03em}}}
\ULon}
\usepackage{enumitem}
\setlist[itemize,1]{label={\fontfamily{cmr}\fontencoding{T1}\selectfont\textbullet}}

\renewenvironment{proof}[1][\proofname] {\par\pushQED{\qed}\normalfont\topsep6\p@\@plus6\p@\relax\trivlist\item[\hskip\labelsep\bfseries#1\@addpunct{.}]\ignorespaces}{\popQED\endtrivlist\@endpefalse}

\AtBeginDocument{\xpatchcmd{\@thm}{\thm@headpunct{.}}{\thm@headpunct{.}}{}{}}

\usepackage{mdwlist}

\providecommand{\definitionname}{Definition}
\providecommand{\examplename}{Example}
\providecommand{\lemmaname}{Lemma}
\providecommand{\propositionname}{Proposition}
\providecommand{\remarkname}{Remark}
\providecommand{\theoremname}{Theorem}

\providecommand{\definitionname}{Definition}
\providecommand{\examplename}{Example}
\providecommand{\lemmaname}{Lemma}
\providecommand{\propositionname}{Proposition}
\providecommand{\remarkname}{Remark}
\providecommand{\theoremname}{Theorem}

\providecommand{\definitionname}{Definition}
\providecommand{\examplename}{Example}
\providecommand{\lemmaname}{Lemma}
\providecommand{\propositionname}{Proposition}
\providecommand{\remarkname}{Remark}
\providecommand{\theoremname}{Theorem}

\providecommand{\definitionname}{Definition}
\providecommand{\examplename}{Example}
\providecommand{\lemmaname}{Lemma}
\providecommand{\propositionname}{Proposition}
\providecommand{\remarkname}{Remark}
\providecommand{\theoremname}{Theorem}

\providecommand{\corollaryname}{Corollary}
\providecommand{\definitionname}{Definition}
\providecommand{\examplename}{Example}
\providecommand{\lemmaname}{Lemma}
\providecommand{\propositionname}{Proposition}
\providecommand{\remarkname}{Remark}
\providecommand{\theoremname}{Theorem}

\providecommand{\lemmaname}{Lemma}
\providecommand{\propositionname}{Proposition}
\providecommand{\remarkname}{Remark}
\providecommand{\theoremname}{Theorem}

\providecommand{\lemmaname}{Lemma}
\providecommand{\propositionname}{Proposition}
\providecommand{\remarkname}{Remark}
\providecommand{\theoremname}{Theorem}

\providecommand{\lemmaname}{Lemma}
\providecommand{\propositionname}{Proposition}
\providecommand{\remarkname}{Remark}
\providecommand{\theoremname}{Theorem}

\providecommand{\lemmaname}{Lemma}
\providecommand{\remarkname}{Remark}
\providecommand{\theoremname}{Theorem}

\providecommand{\lemmaname}{Lemma}
\providecommand{\remarkname}{Remark}
\providecommand{\theoremname}{Theorem}

\providecommand{\lemmaname}{Lemma}
\providecommand{\remarkname}{Remark}
\providecommand{\theoremname}{Theorem}

\providecommand{\corollaryname}{Corollary}
\providecommand{\definitionname}{Definition}
\providecommand{\examplename}{Example}
\providecommand{\lemmaname}{Lemma}
\providecommand{\remarkname}{Remark}
\providecommand{\theoremname}{Theorem}

\providecommand{\corollaryname}{Corollary}
\providecommand{\definitionname}{Definition}
\providecommand{\examplename}{Example}
\providecommand{\lemmaname}{Lemma}
\providecommand{\remarkname}{Remark}
\providecommand{\theoremname}{Theorem}
\renewcommand\theenumi{(\roman{enumi})}
\usepackage{microtype}
\usepackage{indentfirst}
\DeclareMathOperator{\im}{Im}

\makeatother

\providecommand{\algorithmname}{Algorithm}
\providecommand{\corollaryname}{Corollary}
\providecommand{\definitionname}{Definition}
\providecommand{\examplename}{Example}
\providecommand{\lemmaname}{Lemma}
\providecommand{\propositionname}{Proposition}
\providecommand{\remarkname}{Remark}
\providecommand{\theoremname}{Theorem}

\begin{document}
\title{Strong convergence, perturbation resilience and superiorization of
Generalized Modular String-Averaging with infinitely many input operators}
\author{Kay Barshad and Yair Censor\\
Department of Mathematics, University of Haifa, Mt. Carmel, \\
Haifa 3498838, Israel \\
\Letter ~ \href{mailto:kaybarshad@gmail.com}{kaybarshad@gmail.com};
\Letter ~ \href{mailto:yair@math.haifa.ac.il}{yair@math.haifa.ac.il}}
\date{September 15, 2025. Revised: December 23, 2025.}
\maketitle
\begin{abstract}
We study the strong convergence and bounded perturbation resilience
of iterative algorithms based on the Generalized Modular String-Averaging
(GMSA) procedure for infinite sequences of input operators under a
general admissible control. These methods address a variety of feasibility-seeking
problems in real Hilbert spaces, including the common fixed point
problem and the convex feasibility problem. In addition to the general
case, involving certain strongly quasi-nonexpansive input operators,
we consider a specific subclass of their corresponding relaxed firmly
nonexpansive operators. This subclass proves useful for establishing
bounded perturbation resilience. We further demonstrate the applicability
of our strong convergence results, within the GMSA framework, to the
Superiorization Methodology and to Dynamic String-Averaging, analyzing
the behavior of a superiorized version of our main algorithm. The
novelty and significance of this work is that it not only includes
a variety of earlier algorithms as special cases but, more importantly,
it allows the use of modular options of string-averaging that give
rise to new, hitherto unavailable, algorithmic schemes with emphasis
on infinitely many input operators. The strong convergence guarantees
and the applications for superiorization and dynamic string-averaging
are also important facets.
\end{abstract}
\renewcommand\theenumi{(\roman{enumi})}

\textbf{Keywords}: Approximately shrinking operator, bounded perturbation
resilience, bounded regularity, common fixed point problem, convex
feasibility problem, dynamic string-averaging, Fej{\'e}r monotonicity,
nonexpansive operators, strongly quasi-nonexpansive operators, superiorization.

\section{Introduction\protect\label{sec:Introduction}}

The Modular String-Averaging (MSA) procedure for a finite number of
input operators was developed by Reich and Zalas in \cite{MSA}, based
on the original string-averaging scheme introduced by \cite{CEH2001},
and on the work in \cite{CensorTom2003}, to provide a flexible algorithmic
framework for a large family of iterative methods. The MSA thus not
only includes a variety of earlier algorithms as special cases but,
more importantly, it allows the use of new modular options of string-averaging
that give rise to new, hitherto not available, algorithmic schemes.

On the other hand, Bauschke and Combettes suggested in \cite{BC}
the notion of coherence which turned out to be an important approach
in proving weak and strong convergence of such iterative processes.
The stronger version of coherence, was presented by Barshad, Reich
and Zalas in \cite{BRZ(R)}, as a useful tool for establishing the
coherence of a sequence of certain operators.

In 2023 Barshad, Gibali and Reich combined the ideas of both MSA and
strong coherence, by proposing in \cite{BGR2} the Generalized Modular
String-Averaging (GMSA) procedure for an infinite number of input
operators and showing the strong coherence of its output operators,
where the general admissible control was used. In that work, the weak
convergence of GMSA methods was proved based on the theory of coherence,
while the strong convergence necessitated the use of Haugazeau projections.

In many problems the computation of output operators can be inexact
and produce some small errors which we call perturbations. Unfortunately,
it is difficult and not convenient to consider perturbations by using
Haugazeau projection methods when we are interested in the strong
convergence of iterative algorithms in the infinite-dimensional Hilbert
space. For this reason formulating alternative conditions on the input
operators under which the strong convergence will hold in the infinite-dimensional
space is desirable. Such conditions were provided in \cite{MSA},
where the properties of the MSA procedure for a finite family of input
operators under an $s$-intermittent control were studied.

In this paper we develop extensions of these conditions to a much
more general method, which involves relaxation parameters, based on
the GMSA procedure for an infinite family of input operators under
a more general admissible control, where we additionally show its
bounded perturbation resilience. Of particular interest is the case
of relaxed firmly nonexpansive operators, where we can guarantee the
bounded perturbation resilience of our strongly convergent methods.
This enables us to study the properties of the superiorized version
of our GMSA algorithm.

The situation of an infinite family of input operators is mathematically
interesting but also has its roots in modeling of real-world problems.
Combettes \cite[1997]{PLC} noted that finding a common point of a
family of closed and convex sets in a Hilbert space, known as the
hilbertian convex feasibility problem, captures problems in disciplines
as diverse as approximation theory, integral equations, control theory,
signal and image processing, biomedical engineering, communications,
and geophysics. The ``extrapolated method of parallel projections''
(EMOPP), which allows the total number of sets to be countably infinite,
was proposed in that paper. Kong, Pajoohesh and Herman \cite{KPH}
study string-averaging algorithms for convex feasibility with infinitely
many sets and make a convincing case, backed by many references, for
the importance of infinitely many sets. The latter translate to infinitely
many input operators in our framework, see also \cite{AJZ1} and \cite{AJZ}.

Since its inception in 2007, the superiorization method (SM) has evolved
and gained ground. Recent review papers on the subject are Herman's
\cite{Herman2014} and \cite{Herman2020} and the review in \cite{asymmetric-2023}.
The superiorization method was born when the terms and notions \textquotedblleft superiorization\textquotedblright{}
and \textquotedblleft perturbation resilience\textquotedblright ,
in the present context, first appeared in the 2009 paper \cite{DavidiHermanCensor2009}
which followed its 2007 forerunner by Butnariu et al. \cite{ButnariuDavidiHermanKazantsev2007}.
The ideas have some of their roots in the 2006 and 2008 papers of
Butnariu et al. \cite{BuRZ1,BuRZ}, where it was shown that if iterates
of a nonexpansive operator converge for any initial point, then its
inexact iterates with summable errors also converge.

Bounded perturbation resilience of a parallel projection method was
observed as early as 2001 in \cite[Theorem 2]{Combettes_2001} (without
using this term). All these culminated in Ran Davidi\textquoteright s
2010 PhD dissertation \cite{Davidi2010} and the many papers that
appeared since then and are cited in \cite{SM-bib-page} which is
a Webpage dedicated to superiorization and perturbation resilience
of algorithms that contains a continuously updated bibliography on
the subject. This Webpage\footnote{http://math.haifa.ac.il/yair/bib-superiorization-censor.html\#top,
last updated on February 22, 2026, with 204 items} is source for the wealth of work done in this field to date, including
two special issues of journals \cite{CHJ-special-issue-2017} and
\cite{GibaliHermanSchnoerr2020} dedicated to research of the SM.
Interestingly, \cite{AndersenHansen2014} notices some structural
similarities of the SM with incremental proximal gradient methods.

The 2001 paper of Combettes \cite{Combettes2001} also investigated
the use summable perturbations on cutter-type methods. For example,
Theorem 5.2 of \cite{Combettes2001} is a bounded perturbation resilience
result for Algorithm 5.1 there, without using this term. Many more
results in \cite{Combettes2001} are related directly to the SM, without
using the language of the SM.

The ``adaptive steepest descent projections onto convex sets'' (ASD-POCS)
algorithm described in \cite{Sidky2008} and in many subsequent works
on it, has some similarities to the SM. However, it is not as general
as the SM; see \cite{Herman2012} for a comparison between it and
the SM.

The paper is laid out as follows. We set up notations and present
preliminaries in Section \ref{sec:Preliminaries} with emphasis on
various types of algorithmic operators in Subsection \ref{subsec:Various-types-of}.
General bounded regularity and approximate shrinking properties of
operators are discussed in Subsection \ref{subsec:The-general-bounded}
and some additional notions and results are presented in Subsection
\ref{subsec:Additional-notions-and}. In Subsection \ref{GMSA} the
general modular string-averaging procedure is recalled. In Section
\ref{sec:The-strong-convergence} we establish our results concerning
the strong convergence properties of the GMSA procedure based methods.
Namely, Subsection \ref{str_quas_non} describes results in the general
case of strongly quasi nonexpansive operators, while in Subsection
\ref{firm_nonex} we focus on the particular case of relaxed firmly
nonexpansive operators. In Section \ref{SM} we investigate certain
properties of the general superiorization algorithm. Finally, in the
last Section \ref{Apps} we present applications of our results to
the Superiorization Methodology and Dynamic String Averaging.

\section{\protect\label{sec:Preliminaries}Preliminaries}

Throughout this paper, $\mathbb{Z}$ denotes the set of integer numbers,
$\mathbb{N}$ denotes the set of natural numbers (including $0$),
and for any two integers $m$ and $n$, with $m\le n$, we denote
by $\left\{ m,m+1,\dots,n\right\} $ the set of all integers between
$m$ and $n$. For a set $A$, we denote by $\left|A\right|$ the
cardinality of $A$. For a real Hilbert space $\mathcal{H}$, we use
the following notations:
\begin{itemize}
\item $\langle\cdot,\cdot\rangle$ denotes the inner product on $\mathcal{H}.$
\item $\Vert\cdot\Vert$ denotes the norm on $\mathcal{H}$ induced by $\langle\cdot,\cdot\rangle.$
\item $Id$ denotes the identity operator on $\mathcal{H}$.
\item $\mathrm{Fix}T$ denotes the set $\mathrm{Fix}T:=\{x\in\mathcal{H}\mid T(x)=x\}$
of fixed points of an operator $T:\mathcal{H}\rightarrow\mathcal{H}$.
\item For a nonempty and convex subset $C$ of $\mathcal{H}$, we denote
by $P_{C}$ the (unique) metric projection onto $C$, the existence
of which is guaranteed if $C$ is, in addition, closed.
\item The expression $x^{k}\rightarrow x$ denotes the strong convergence
to $x$ of a sequence $\left\{ x^{k}\right\} _{k=0}^{\infty}$ in
$\mathscr{\left(\mathcal{H}\mathscr{,\Vert\cdot\Vert}\right)}$ when
$k\rightarrow\infty$.
\item For a convex function $\phi:\mathcal{H}\rightarrow\mathbb{R}$, where
$\mathbb{R}$ denotes the real line, and a point $x\in\mathcal{H}$,
we denote by $\partial\phi\left(x\right)$ the subdifferential set
of $\phi$ at $x$, that is,
\[
\partial\phi\left(x\right):=\left\{ g\in\mathcal{H}\,|\,\left\langle g,y-x\right\rangle \le\phi\left(y\right)-\phi\left(x\right)\,\,\mathrm{for}\,\,\mathrm{all}\,\,y\in\mathcal{H}\right\} .
\]
\item For a function $f:\mathcal{H}\rightarrow\mathbb{R}$ and a subset
$A$ of $\mathcal{H}$, we denote by $\underset{x\in A}{\mathrm{Argmin}}f\left(x\right)$
the set of minimizers of $f$ on the set $A$.
\item $B\left(x,r\right)$ denotes the open ball centered at $x\in\mathcal{H}$
of radius $r>0$.
\item For a nonempty subset $C$ of $\mathcal{H}$ and $x\in\mathcal{H}$,
we denote by $d\left(x,C\right)$ the distance from $x$ to $C$,
that is, $d\left(x,C\right):=\inf_{y\in C}\left\Vert x-y\right\Vert $.
\end{itemize}
We study the properties of the following algorithm, which serves as
the algorithmic framework for our GMSA methods.
\begin{lyxalgorithm}
[The algorithmic framework]\label{alg=0000201}

Given $\varepsilon\in\left(0,1\right]$, $x^{0}\in\mathcal{H}$ and
a sequence $\left\{ T_{k}\right\} _{k=0}^{\infty}$ of operators,
$T_{k}:\mathcal{H\rightarrow\mathcal{H}}$ for each $k\in\mathbb{N}$,
the algorithm is defined by the recurrence
\[
x^{k+1}:=x^{k}+\lambda_{k}\left(T_{k}\left(x^{k}\right)-x^{k}\right),
\]
where $\lambda_{k}\in\left[\varepsilon,2-\varepsilon\right]$ for
each $k\in\mathbb{N}$.
\end{lyxalgorithm}

\subsection{\protect\label{subsec:Various-types-of}Various types of algorithmic
operators and their properties}

We review the following classes of algorithmic operators together
with key results regarding their properties. For additional details,
see, for instance, \cite{C_book}.
\begin{defn}
Let $T:\mathcal{H}\rightarrow\mathscr{\mathcal{H}}$ be an operator
and let $\lambda\in\left[0,2\right]$. The operator $T_{\lambda}:\mathcal{H}\rightarrow\mathscr{\mathcal{H}}$
defined by $T_{\lambda}:=\left(1-\lambda\right)\mathrm{Id}+\lambda T$
is called a $\lambda$-$relaxation$ of the operator $T$. The operator
$T_{2}$ is called a \textit{reflection} of the operator $T$.
\end{defn}
\begin{rem}
\label{Relaxation=000020has=000020the=000020same=000020Fix} Clearly,
$\mathrm{Fix}T=\mathrm{Fix}T_{\lambda}$ for every operator $T:\mathcal{H}\rightarrow\mathcal{H}$
and every $\lambda\in\left(0,2\right]$.
\end{rem}
\begin{defn}
An operator $T:\mathcal{H}\rightarrow\mathcal{H}$ is \textit{nonexpansive}
if 
\[
\left(\forall x,y\in\mathcal{H}\right)\,\,\left\Vert T\left(x\right)-T\left(y\right)\right\Vert \leq\left\Vert x-y\right\Vert .
\]
\end{defn}
For every ordered pair $\left(x,y\right)\in\mathscr{\mathcal{H}}^{2}$,
we define the closed and convex set $H\left(x,y\right)$ by

\[
H\left(x,y\right):=\left\{ u\in\mathcal{H}\,\vert\,\langle u-y,x-y\rangle\le0\right\} .
\]

\begin{defn}
An operator $T:\mathscr{\mathcal{H}}\rightarrow\mathscr{\mathcal{H}}$
is called a \textit{cutter} if it satisfies 
\[
\textnormal{Fix}T\subseteq H\left(x,T\left(x\right)\right),\text{ }\forall x\in\mathcal{H},
\]
or, equivalently, if $\left\langle z-T(x),x-T(x)\right\rangle \leq0$
for each $z\in\mathrm{Fix}T$ and $x\in\mathcal{H}$. For $\lambda\in\left[0,2\right]$,
an operator $T:\mathcal{H}\rightarrow\mathcal{H}$ is a $\lambda$-\textit{relaxed
}cutter if $T$ is a $\lambda$-relaxation of a cutter $U$, that
is, $T=U_{\lambda}=\left(1-\lambda\right)Id+\lambda U$.
\end{defn}
\begin{prop}
[{\cite[Proposition 2.6]{BC}}]\label{cutter_fix}Let $T$ be a cutter.
Then $\mathrm{Fix}T=\cap_{x\in\mathcal{H}}H\left(x,T\left(x\right)\right)$
and hence $\mathrm{Fix}T$ is a closed and convex subset of $\mathcal{H}$,
as an intersection of half-spaces.
\end{prop}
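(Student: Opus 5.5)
We prove the two inclusions separately and then read off closedness and convexity.

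For $\mathrm{Fix}T\subseteq\bigcap_{x\in\mathcal{H}}H\left(x,T\left(x\right)\right)$ there is nothing to do: this is exactly the definition of a cutter. Every $z\in\mathrm{Fix}T$ satisfies $\left\langle z-T(x),x-T(x)\right\rangle \le0$ for every $x\in\mathcal{H}$, so $z\in H\left(x,T\left(x\right)\right)$ for all $x$.

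For the reverse inclusion, take $u\in\bigcap_{x\in\mathcal{H}}H\left(x,T\left(x\right)\right)$ and specialize the index to $x=u$. Membership $u\in H\left(u,T\left(u\right)\right)$ means
\[
\left\langle u-T(u),u-T(u)\right\rangle =\left\Vert u-T(u)\right\Vert ^{2}\le0 .
\]
Hence $T(u)=u$, that is, $u\in\mathrm{Fix}T$. Choosing $x=u$ is the only real idea in the argument, and it is the step I would write out first.

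For closedness and convexity, note that each $H(x,y)$ is a closed convex set. If $x\neq y$ it is the closed half-space $\left\{ u\,\vert\,\langle u,x-y\rangle\le\langle y,x-y\rangle\right\}$. If $x=y$ it is all of $\mathcal{H}$. Either way it is the sublevel set of the continuous affine functional $u\mapsto\langle u-y,x-y\rangle$, so it is closed and convex. An arbitrary intersection of closed convex sets is closed and convex, so $\mathrm{Fix}T$ is closed and convex.

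The only point needing care is the degenerate case $x=T(x)$, where $H$ is the whole space rather than a genuine half-space. This does not affect the argument, since the whole space is still closed and convex. The phrase ``intersection of half-spaces'' in the statement should therefore be read as allowing $\mathcal{H}$ itself as a trivial half-space. No earlier results are needed beyond the definition of a cutter.
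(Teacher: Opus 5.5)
Your proof is correct. The paper gives no proof of its own and simply cites Bauschke and Combettes; your argument is the standard one behind that result: the inclusion $\mathrm{Fix}T\subseteq\bigcap_{x\in\mathcal{H}}H(x,T(x))$ is the definition of a cutter, taking $x=u$ gives $\|u-T(u)\|^{2}\le0$ for the reverse inclusion, and closedness and convexity follow because each $H(x,T(x))$ is a closed convex sublevel set. Your handling of the degenerate case $H(x,x)=\mathcal{H}$ is fine, and the argument also covers $\mathrm{Fix}T=\emptyset$, which the paper's convention on cutters allows.
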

\begin{rem}
\label{Rem_cut}The class of cutters was originally introduced by
Bauschke and Combettes in \cite{BC} under a different terminology,
where it was referred to as the ``class $\mathfrak{\mathfrak{T}}$''.
The term ``cutter'' was later proposed in \cite{cegielski-censor-2011}.
Other names are used in the literature for these operators, for instance,
``firmly quasi-nonexpansive'' (see, for example, Definition 4.1
and Proposition 4.2 in \cite{BC_book}), which also contains various
properties and examples of these operators. It is important to alert
the reader about some ambiguity in the literature regarding this term.
Definition 9.2 of \cite{cegielski-censor-2011} provides the original
definition of cutters in which they are defined without any condition
on the non-emptiness of their fixed points sets. In this sense cutters
are simply another name for the members of the original ``class $\mathfrak{\mathfrak{T}}$''
of Bauschke and Combettes in \cite{BC} which are also defined there
without such a condition. However, in Definition 2.1.30 in \cite{C_book}
the definition was modified to require that the fixed points sets
of the cutter operators be nonempty. This has led to some ambiguity,
as some later publications either include or do not include this non-emptiness
assumption.

We follow the original definition (Definition 9.2 of \cite{cegielski-censor-2011})
and explicitly assume the non-emptiness of the fixed point set of
a cutter only when required.
\end{rem}
\begin{defn}
We say that an operator $T:\mathcal{H}\rightarrow\mathcal{H}$ is:
\begin{enumerate}
\item \textit{Quasi-nonexpansive} if 
\[
\left(\forall x\in\mathcal{H}\right)\left(z\in\mathrm{Fix}T\right)\left\Vert T\left(x\right)-z\right\Vert \le\left\Vert x-z\right\Vert .
\]
\item \textit{$\rho$-strongly quasi-nonexpansive} for some $0\le\rho\in\mathbb{R}$
if
\begin{equation}
\left(\forall x\in\mathcal{H}\right)\left(z\in\mathrm{Fix}T\right)\left\Vert T\left(x\right)-z\right\Vert ^{2}\le\left\Vert x-z\right\Vert ^{2}-\rho\left\Vert T\left(x\right)-x\right\Vert ^{2}.\label{eq:-1-1}
\end{equation}
If $T$ satisfies \eqref{eq:-1-1} for some $\rho>0$, then it is
called strongly quasi-nonexpansive.
\end{enumerate}
\end{defn}
\begin{thm}
[{\cite[Theorem 2.1.39]{C_book}}]\label{thm:2.1.39}Let $T:\mathcal{H}\rightarrow\mathcal{H}$
be an operator and let $\lambda\in\left(0,2\right]$. Then $T$ is
a cutter if and only if its relaxation $T_{\lambda}$ is $\frac{2-\lambda}{\lambda}$-strongly
quasi-nonexpansive, that is, 
\[
\left\Vert T_{\lambda}x-z\right\Vert ^{2}\le\left\Vert x-z\right\Vert ^{2}-\frac{2-\lambda}{\lambda}\left\Vert T_{\lambda}x-x\right\Vert ^{2}
\]
for all $x\in\mathcal{H}$ and for all $z\in\mathrm{Fix}T$.
\end{thm}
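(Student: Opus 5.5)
The plan is to reduce the statement to an algebraic identity relating $T_{\lambda}$ and $T$. By Remark \ref{Relaxation=000020has=000020the=000020same=000020Fix}, $\mathrm{Fix}T_{\lambda}=\mathrm{Fix}T$ for $\lambda\in(0,2]$, so the same set of points $z$ appears on both sides. Fix $x\in\mathcal{H}$ and $z\in\mathrm{Fix}T$, and write $u:=x-T(x)$. Then $T_{\lambda}x-x=-\lambda u$ and $T_{\lambda}x-z=(x-z)-\lambda u$.

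The key step is to expand the squared norm:
\[
\left\Vert T_{\lambda}x-z\right\Vert ^{2}=\left\Vert x-z\right\Vert ^{2}-2\lambda\langle x-z,u\rangle+\lambda^{2}\left\Vert u\right\Vert ^{2}.
\]
Since $\frac{2-\lambda}{\lambda}\left\Vert T_{\lambda}x-x\right\Vert ^{2}=(2-\lambda)\lambda\left\Vert u\right\Vert ^{2}$, the claimed inequality is equivalent to
\[
-2\lambda\langle x-z,u\rangle+\lambda^{2}\left\Vert u\right\Vert ^{2}\le-2\lambda\left\Vert u\right\Vert ^{2}+\lambda^{2}\left\Vert u\right\Vert ^{2}.
\]
Dividing by $2\lambda>0$, this becomes $\langle x-z,u\rangle\ge\left\Vert u\right\Vert ^{2}$.

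This condition can be rewritten as $\langle x-z-u,u\rangle\ge0$. Because $x-u=T(x)$, that is $\langle T(x)-z,x-T(x)\rangle\ge0$, i.e.
\[
\langle z-T(x),x-T(x)\rangle\le0,
\]
which is exactly the cutter inequality. Every step above is a reversible equivalence, valid for each fixed pair $(x,z)$. Taking all $x\in\mathcal{H}$ and all $z\in\mathrm{Fix}T=\mathrm{Fix}T_{\lambda}$ therefore gives both directions of the "if and only if" at once.

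There is no serious obstacle. The two places needing care are the equality of fixed point sets, which lets the quantifier over $z$ match on both sides and uses $\lambda\neq0$, and the division by $\lambda$, which is legitimate because $\lambda>0$. When $\lambda=2$ the coefficient $\frac{2-\lambda}{\lambda}$ vanishes, but the identity is still an equivalence, so this case needs no separate treatment.
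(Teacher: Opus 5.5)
Your proof is correct: expanding $\|T_{\lambda}x-z\|^{2}$ with $T_{\lambda}x-x=\lambda(T(x)-x)$ reduces the $\frac{2-\lambda}{\lambda}$-strong quasi-nonexpansiveness inequality, for each fixed pair $(x,z)$, to $\langle z-T(x),x-T(x)\rangle\le0$, and $\mathrm{Fix}T_{\lambda}=\mathrm{Fix}T$ makes the quantifiers match. The paper does not prove this result itself but cites Cegielski's book, where it follows from essentially the same expansion combined with the equivalent form $\langle z-x,T(x)-x\rangle\ge\|T(x)-x\|^{2}$ of the cutter inequality; your pointwise equivalence also covers $\mathrm{Fix}T=\emptyset$ (both sides hold vacuously), which matters here because the paper drops the nonemptiness assumption.
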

\begin{thm}
[{\cite[Theorems 2.1.48 and 2.1.50]{C_book}}]\label{2.1.50}Let $m$
be a positive integer. For each \textup{$i=1,2,\dots,m$}, let $\rho_{i}>0$
and assume that $U_{i}:\mathscr{\mathcal{H}\rightarrow\mathcal{H}}$
is a $\rho_{i}$-strongly quasi-nonexpansive operator. Suppose that
$\bigcap_{i=1}^{m}\mathrm{Fix}U_{i}\not=\emptyset$. Define $\rho:=\min_{i\in\left\{ 1,2,\dots,m\right\} }\rho_{i}$.
Then the following two assertions hold:
\begin{enumerate}
\item The convex combination $U:=\sum_{i=1}^{m}\omega_{i}U_{i}$, where
$\omega_{i}\in\left(0,1\right]$ for each \textup{$i=1,2,\dots,m$}
, and $\sum_{i=1}^{m}\omega_{i}=1$, is $\rho$-strongly quasi-nonexpansive.
\item The composition $V:=U_{m}\cdots U_{2}U_{1}$ is $\rho m^{-1}$-strongly
quasi-nonexpansive.
\end{enumerate}
\end{thm}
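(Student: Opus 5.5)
We first check that the fixed point sets behave as needed. Set $F:=\bigcap_{i=1}^{m}\mathrm{Fix}U_{i}\neq\emptyset$. The trivial inclusions are $F\subseteq\mathrm{Fix}U$ and $F\subseteq\mathrm{Fix}V$. Since quasi-nonexpansivity is only required with respect to $\mathrm{Fix}U$ and $\mathrm{Fix}V$, we will also need the reverse inclusions. The plan is to first prove the inequalities for every $z\in F$, and then use these inequalities to obtain $\mathrm{Fix}U=F$ and $\mathrm{Fix}V=F$.

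\emph{Convex combination.} Fix $x\in\mathcal{H}$ and $z\in F$. Write $U(x)-z=\sum_{i}\omega_{i}(U_{i}(x)-z)$ and apply the identity
\[
\Big\|\sum_{i}\omega_{i}a_{i}\Big\|^{2}=\sum_{i}\omega_{i}\|a_{i}\|^{2}-\tfrac12\sum_{i,j}\omega_{i}\omega_{j}\|a_{i}-a_{j}\|^{2}\le\sum_{i}\omega_{i}\|a_{i}\|^{2}.
\]
Together with the $\rho_{i}$-strong quasi-nonexpansivity of each $U_{i}$, this gives
\[
\|U(x)-z\|^{2}\le\|x-z\|^{2}-\rho\sum_{i}\omega_{i}\|U_{i}(x)-x\|^{2}.
\]
By convexity of $\|\cdot\|^{2}$, $\sum_{i}\omega_{i}\|U_{i}(x)-x\|^{2}\ge\|U(x)-x\|^{2}$, which yields the desired inequality for $z\in F$.

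For the reverse inclusion, let $x\in\mathrm{Fix}U$ and take any $z\in F$. The sharper inequality $\|U(x)-z\|^{2}\le\|x-z\|^{2}-\rho\sum_{i}\omega_{i}\|U_{i}(x)-x\|^{2}$ then forces $\sum_{i}\omega_{i}\|U_{i}(x)-x\|^{2}\le0$. Since all $\omega_{i}>0$, every $U_{i}(x)=x$, so $\mathrm{Fix}U=F$.

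\emph{Composition.} Set $x_{0}:=x$ and $x_{i}:=U_{i}(x_{i-1})$, so that $V(x)=x_{m}$. For $z\in F$, we telescope the strong quasi-nonexpansive inequalities of the $U_{i}$:
\[
\|x_{m}-z\|^{2}\le\|x-z\|^{2}-\rho\sum_{i=1}^{m}\|x_{i}-x_{i-1}\|^{2}.
\]
By Cauchy--Schwarz,
\[
\|V(x)-x\|^{2}=\Big\|\sum_{i}(x_{i}-x_{i-1})\Big\|^{2}\le m\sum_{i}\|x_{i}-x_{i-1}\|^{2},
\]
hence $\rho\sum_{i}\|x_{i}-x_{i-1}\|^{2}\ge\rho m^{-1}\|V(x)-x\|^{2}$, which is the claimed inequality for $z\in F$.

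\emph{Main obstacle: $\mathrm{Fix}V\subseteq F$.} Let $x\in\mathrm{Fix}V$ and take $z\in F$. The telescoped inequality gives $\sum_{i}\|x_{i}-x_{i-1}\|^{2}\le0$. Therefore $x_{1}=x$, and inductively every $U_{i}(x)=x$, so $x\in F$. The only point to be careful about is that strict positivity of $\rho$ (guaranteed since each $\rho_{i}>0$) and nonemptiness of $F$ are what make this argument work. With $\mathrm{Fix}V=F$ and $\mathrm{Fix}U=F$ established, the inequalities proved for $z\in F$ are exactly the statements (i) and (ii).
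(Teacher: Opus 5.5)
Your proof is correct and is essentially the standard argument behind the cited results from Cegielski's monograph (the paper does not prove this statement; it only quotes it). You use convexity of $\|\cdot\|^{2}$ for the convex combination, and for the composition you telescope the strong quasi-nonexpansivity inequalities along the partial compositions and apply $\|\sum_{i=1}^{m}a_{i}\|^{2}\le m\sum_{i=1}^{m}\|a_{i}\|^{2}$; you also correctly establish $\mathrm{Fix}U=\mathrm{Fix}V=\bigcap_{i=1}^{m}\mathrm{Fix}U_{i}$ from the sharper inequalities, using $\rho>0$ and $\omega_{i}>0$, so the conclusions hold for every fixed point and not only for common ones.
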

\begin{rem}
In a manner similar to Remark \ref{Rem_cut} regarding cutters, we
note that in the literature (for instance, in Definitions 2.1.19 and
2.1.38 of \cite{C_book}) quasi-nonexpansive and strongly quasi-nonexpansive
operators are defined with the requirement that their fixed point
sets be nonempty. Here we define these operators without imposing
this condition, while we assume the non-emptiness of their fixed points
sets only if we need it.
\end{rem}
The following corollary is an immediate consequence of Theorem \ref{thm:2.1.39}.
\begin{cor}
\label{cor:2.7} Let $U:\mathcal{H}\rightarrow\mathcal{H}$ be an
operator and let $T:=Id+\frac{1+\rho}{2}\left(U-Id\right)$ for some
$\rho\ge0$. Then $U$ is $\rho$-strongly quasi-nonexpansive if and
only if $T$ is a cutter. In particular, $U$ is quasi-nonexpansive
if and only if $T:=\frac{1}{2}\left(U+Id\right)$ is a cutter.
\end{cor}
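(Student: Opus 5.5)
The plan is to obtain Corollary \ref{cor:2.7} directly from Theorem \ref{thm:2.1.39} by writing $U$ as a relaxation of $T$. Set $\lambda:=\frac{2}{1+\rho}$. Since $\rho\ge0$, we have $\lambda\in(0,2]$. From $T=Id+\frac{1+\rho}{2}(U-Id)$ we get $U-Id=\lambda(T-Id)$, that is,
\[
U=(1-\lambda)Id+\lambda T=T_{\lambda}.
\]
So $U$ is exactly the $\lambda$-relaxation of $T$ with this choice of $\lambda$.

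Next, I check that $\mathrm{Fix}T=\mathrm{Fix}U$. By Remark \ref{Relaxation=000020has=000020the=000020same=000020Fix}, this holds because $\lambda\in(0,2]$. It can also be seen directly: $T(x)=x$ if and only if $U(x)=x$, since $\frac{1+\rho}{2}>0$. This matters because the definition of $\rho$-strong quasi-nonexpansiveness quantifies over $z\in\mathrm{Fix}U$, whereas Theorem \ref{thm:2.1.39} quantifies over $z\in\mathrm{Fix}T$.

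Then I compute the constant appearing in Theorem \ref{thm:2.1.39}:
\[
\frac{2-\lambda}{\lambda}=\frac{2}{\lambda}-1=(1+\rho)-1=\rho.
\]
Theorem \ref{thm:2.1.39} says $T$ is a cutter if and only if $T_{\lambda}=U$ satisfies
\[
\Vert Ux-z\Vert^{2}\le\Vert x-z\Vert^{2}-\rho\Vert Ux-x\Vert^{2}
\]
for all $x\in\mathcal{H}$ and all $z\in\mathrm{Fix}T=\mathrm{Fix}U$. This is precisely $\rho$-strong quasi-nonexpansiveness of $U$, which gives the equivalence.

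For the particular case, take $\rho=0$. Then $T=\frac{1}{2}(U+Id)$, and $0$-strong quasi-nonexpansiveness is equivalent to quasi-nonexpansiveness, because the inequality with $\rho=0$ says $\Vert Ux-z\Vert\le\Vert x-z\Vert$ after taking square roots of nonnegative quantities.

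None of these steps is a genuine obstacle. The only points requiring care are checking that $\lambda$ lies in the admissible range $(0,2]$ and keeping the fixed-point sets aligned. Both hold under the paper's convention that fixed-point sets may be empty, in which case both sides of the equivalence are vacuous.
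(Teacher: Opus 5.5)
Your proof is correct and follows the paper's route. The paper simply calls the corollary an immediate consequence of Theorem \ref{thm:2.1.39}. Your choice $\lambda=\frac{2}{1+\rho}\in(0,2]$, giving $U=T_{\lambda}$, $\frac{2-\lambda}{\lambda}=\rho$ and $\mathrm{Fix}\,T=\mathrm{Fix}\,U$, is exactly that deduction, and you also handle the $\rho=0$ case and empty fixed-point sets correctly.
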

\begin{defn}
\label{firm=000020def}We say that an operator $T:\mathscr{\mathcal{H}}\rightarrow\mathcal{H}$
is:
\begin{enumerate}
\item \textit{Firmly nonexpansive} if
\[
\left(\forall x,y\in\mathcal{H}\right)\left\langle T\left(x\right)-T\left(y\right),x-y\right\rangle \ge\left\Vert T\left(x\right)-T\left(y\right)\right\Vert ^{2}.
\]
\item $\rho$-\textit{firmly nonexpansive}, where $\rho\ge0$ is a real
number,\textit{ }if 
\[
\left(\forall x,y\in\mathcal{H}\right)\left\Vert T\left(x\right)-T\left(y\right)\right\Vert ^{2}\le\left\Vert x-y\right\Vert ^{2}-\rho\left\Vert \left(x-T\left(x\right)\right)-\left(y-T\left(y\right)\right)\right\Vert ^{2}.
\]
\end{enumerate}
\end{defn}
\begin{rem}
\label{FNE-SQNE}Clearly, for each $\rho\ge0$, a $\rho$-firmly nonexpansive
operator is, in particular, nonexpansive and $\rho$-strongly quasi-nonexpansive.
\end{rem}
\begin{defn}
For $\lambda\in\left[0,2\right]$, an operator $T:\mathcal{H}\rightarrow\mathcal{H}$
is called $\lambda$-\textit{relaxed firmly nonexpansive} if $T$
is a $\lambda$-relaxation of a firmly nonexpansive operator $U$,
that is, $T=U_{\lambda}=\left(1-\lambda\right)Id+\lambda U$.
\end{defn}
\begin{thm}
[{\cite[Theorems 2.2.4 and 2.2.5]{C_book}}]\label{thm:2.2.5} If
$T:\mathscr{\mathcal{H}}\rightarrow\mathcal{H}$ is firmly nonexpansive,
then $T$ is a nonexpansive cutter.
\end{thm}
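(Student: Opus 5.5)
The statement has two parts: nonexpansiveness and the cutter property. I would derive both directly from the defining inequality of firm nonexpansiveness in Definition \ref{firm=000020def}(i),
\[
\left\langle T(x)-T(y),x-y\right\rangle \ge\left\Vert T(x)-T(y)\right\Vert ^{2}\quad\text{for all }x,y\in\mathcal{H}.
\]

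\textbf{Nonexpansiveness.} Apply the Cauchy--Schwarz inequality to the left-hand side. This gives $\|T(x)-T(y)\|^{2}\le\|T(x)-T(y)\|\,\|x-y\|$. If $T(x)\neq T(y)$, dividing by $\|T(x)-T(y)\|$ yields $\|T(x)-T(y)\|\le\|x-y\|$. If $T(x)=T(y)$, the inequality holds trivially.

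\textbf{Cutter property.} Fix $x\in\mathcal{H}$ and $z\in\mathrm{Fix}T$, and substitute $y=z$ in the defining inequality. Since $T(z)=z$, it becomes $\langle T(x)-z,x-z\rangle\ge\|T(x)-z\|^{2}$, which rearranges to
\[
\left\langle T(x)-z,\,x-T(x)\right\rangle \ge0.
\]
To see this, write $x-z=(x-T(x))+(T(x)-z)$ and cancel $\|T(x)-z\|^{2}$ from both sides. Multiplying by $-1$ gives $\langle z-T(x),x-T(x)\rangle\le0$. This says exactly that $z\in H\left(x,T(x)\right)$, so $\mathrm{Fix}T\subseteq H\left(x,T(x)\right)$ for every $x$, i.e., $T$ is a cutter. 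If $\mathrm{Fix}T=\emptyset$, the inclusion holds vacuously, which is consistent with the convention of Remark \ref{Rem_cut}.

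Both steps are routine algebra. The only point needing care is the rearrangement of the inner product, in particular getting the sign right in the cutter inequality.
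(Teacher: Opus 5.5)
Your proof is correct and is the standard direct argument: Cauchy--Schwarz gives nonexpansiveness, and substituting $y=z\in\mathrm{Fix}T$ and then splitting $x-z=(x-T(x))+(T(x)-z)$ yields exactly $\langle z-T(x),x-T(x)\rangle\le 0$. The paper gives no proof of its own and only cites Cegielski's book, which uses this same route; your remark that the inclusion holds vacuously when $\mathrm{Fix}T=\emptyset$ is apt, since the paper, unlike the book, does not require cutters to have fixed points.
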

\begin{thm}
[{\cite[Corollary 2.2.15]{C_book}}]\label{FNE_relax_FNE}For any
$\lambda\in\left(0,2\right]$, an operator $T:\mathcal{H}\rightarrow\mathcal{H}$
is firmly nonexpansive if and only if its relaxation $T_{\lambda}$
is $\left(2-\lambda\right)\lambda^{-1}$-firmly nonexpansive, that
is, if and only if
\[
\left(\forall x,y\in\mathcal{H}\right)\left\Vert T_{\lambda}\left(x\right)-T_{\lambda}\left(y\right)\right\Vert ^{2}\le\left\Vert x-y\right\Vert ^{2}-\left(2-\lambda\right)\lambda^{-1}\left\Vert \left(x-T_{\lambda}\left(x\right)\right)-\left(y-T_{\lambda}\left(y\right)\right)\right\Vert ^{2}.
\]
\end{thm}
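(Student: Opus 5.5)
The plan is to prove both directions at once, by reducing the inequality for $T_{\lambda}$ to an inequality for $T$ through an exact identity. Fix $x,y\in\mathcal{H}$ and put $a:=x-y$ and $b:=T(x)-T(y)$. Since $T_{\lambda}=(1-\lambda)Id+\lambda T$, we get
\[
T_{\lambda}(x)-T_{\lambda}(y)=(1-\lambda)a+\lambda b
\]
and
\[
\left(x-T_{\lambda}(x)\right)-\left(y-T_{\lambda}(y)\right)=\lambda(a-b).
\]
Substituting these into the inequality of the statement, it reads
\[
\left\Vert (1-\lambda)a+\lambda b\right\Vert ^{2}\le\left\Vert a\right\Vert ^{2}-(2-\lambda)\lambda\left\Vert a-b\right\Vert ^{2}.
\]

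Next, expand both sides in terms of $\Vert a\Vert^{2}$, $\Vert b\Vert^{2}$ and $\langle a,b\rangle$. The left-hand side equals
\[
(1-\lambda)^{2}\Vert a\Vert^{2}+2\lambda(1-\lambda)\langle a,b\rangle+\lambda^{2}\Vert b\Vert^{2}.
\]
The right-hand side equals
\[
\Vert a\Vert^{2}-(2\lambda-\lambda^{2})\left(\Vert a\Vert^{2}-2\langle a,b\rangle+\Vert b\Vert^{2}\right)=(1-\lambda)^{2}\Vert a\Vert^{2}+2(2\lambda-\lambda^{2})\langle a,b\rangle-(2\lambda-\lambda^{2})\Vert b\Vert^{2}.
\]
The $\Vert a\Vert^{2}$ terms cancel. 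After collecting the remaining terms, the inequality becomes
\[
2\lambda\Vert b\Vert^{2}\le 2\lambda\langle a,b\rangle,
\]
because $\lambda^{2}+2\lambda-\lambda^{2}=2\lambda$ and $2(2\lambda-\lambda^{2})-2\lambda(1-\lambda)=2\lambda$.

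Since $\lambda>0$, this is equivalent to $\Vert T(x)-T(y)\Vert^{2}\le\langle T(x)-T(y),x-y\rangle$, which is exactly the definition of firm nonexpansiveness in Definition \ref{firm=000020def}. Every step above is an equivalence holding for each pair $(x,y)$. Quantifying over all pairs therefore gives both implications of the statement at once.

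The only delicate point is the bookkeeping of the coefficients in the expansion. In particular, one has to verify that the $\Vert a\Vert^{2}$ terms cancel exactly, which holds because $1-(2\lambda-\lambda^{2})=(1-\lambda)^{2}$. One also has to use $\lambda>0$ when dividing by $2\lambda$; this is why the endpoint $\lambda=0$ is excluded from the statement.
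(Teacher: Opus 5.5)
Your proof is correct: the displacement formulas, the expansion and the cancellation all check out. Together they give the exact identity
\[
\Vert x-y\Vert^{2}-\frac{2-\lambda}{\lambda}\Vert(x-T_{\lambda}x)-(y-T_{\lambda}y)\Vert^{2}-\Vert T_{\lambda}x-T_{\lambda}y\Vert^{2}=2\lambda\bigl(\langle Tx-Ty,x-y\rangle-\Vert Tx-Ty\Vert^{2}\bigr),
\]
which holds uniformly for $\lambda\in(0,2]$. At $\lambda=2$ the coefficient $(2-\lambda)\lambda^{-1}$ vanishes, and the statement reduces to nonexpansiveness of the reflection.

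The paper gives no proof of its own here; it simply quotes Corollary 2.2.15 of \cite{C_book}. Your computation is therefore a self-contained substitute for that citation rather than an alternative to an argument in the text.

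What your route adds is that the equivalence holds pointwise in $(x,y)$. Take $y=z\in\mathrm{Fix}T$, so that $T_{\lambda}z=z$. The same identity then makes the right-hand side $2\lambda\langle Tx-z,x-Tx\rangle$, which is nonnegative exactly when the cutter inequality holds at $(x,z)$. So the one calculation also proves Theorem~\ref{thm:2.1.39}.
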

The following corollary is immediate from Theorem \ref{FNE_relax_FNE}.
\begin{cor}
\label{cor:=000020frm-nexp}Let $U:\mathcal{H}\rightarrow\mathcal{H}$
be an operator and let $T:=Id+2^{-1}\left(1+\rho\right)\left(U-Id\right)$
for some $\rho\ge0$, that is, $T=U_{2^{-1}\left(1+\rho\right)}$.
Then $U$ is $\rho$-firmly nonexpansive if and only if $T$ is firmly
nonexpansive. In particular, $U$ is nonexpansive if and only if $T:=\frac{1}{2}\left(U+Id\right)$
is firmly nonexpansive.
\end{cor}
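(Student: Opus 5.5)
We derive Corollary \ref{cor:=000020frm-nexp} directly from Theorem \ref{FNE_relax_FNE} by choosing the relaxation parameter so that $U$ becomes a relaxation of $T$. Set $\lambda:=\frac{2}{1+\rho}$. Since $\rho\ge0$, we have $\lambda\in\left(0,2\right]$, so Theorem \ref{FNE_relax_FNE} applies to $T$ with this $\lambda$.

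\textbf{Step 1: $U$ is a relaxation of $T$.} From $T=Id+\frac{1+\rho}{2}\left(U-Id\right)$ we get $T-Id=\frac{1+\rho}{2}\left(U-Id\right)$. Hence
\[
U=Id+\frac{2}{1+\rho}\left(T-Id\right)=\left(1-\lambda\right)Id+\lambda T=T_{\lambda}.
\]

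\textbf{Step 2: compute the constant.} The constant in Theorem \ref{FNE_relax_FNE} is
\[
\left(2-\lambda\right)\lambda^{-1}=\frac{2\rho/(1+\rho)}{2/(1+\rho)}=\rho .
\]

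\textbf{Step 3: apply the theorem.} Theorem \ref{FNE_relax_FNE} states that $T$ is firmly nonexpansive if and only if $T_{\lambda}=U$ satisfies
\[
\left\Vert U(x)-U(y)\right\Vert ^{2}\le\left\Vert x-y\right\Vert ^{2}-\rho\left\Vert \left(x-U(x)\right)-\left(y-U(y)\right)\right\Vert ^{2}\quad\text{for all }x,y\in\mathcal{H}.
\]
By Definition \ref{firm=000020def}(ii), this is exactly the statement that $U$ is $\rho$-firmly nonexpansive. This proves the main equivalence.

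\textbf{The particular case.} Take $\rho=0$. Then $T=\frac{1}{2}\left(U+Id\right)$, and a $0$-firmly nonexpansive operator is by definition just a nonexpansive one. So $U$ is nonexpansive if and only if $\frac{1}{2}\left(U+Id\right)$ is firmly nonexpansive.

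The argument is purely algebraic. The only point needing care is checking that $\lambda=\frac{2}{1+\rho}$ lies in $\left(0,2\right]$, so that Theorem \ref{FNE_relax_FNE} is applicable; this holds because $\rho\ge0$.
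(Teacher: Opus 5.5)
Your proof is correct and follows the paper's approach: the paper simply states the corollary is immediate from Theorem \ref{FNE_relax_FNE}. Your derivation supplies exactly that step, writing $U=T_{\lambda}$ with $\lambda=2(1+\rho)^{-1}\in(0,2]$, checking $(2-\lambda)\lambda^{-1}=\rho$, and taking $\rho=0$ for the particular case.
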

\begin{thm}
[{\cite[Theorems 2.2.35 and 2.2.42]{C_book}}]\label{frm-nonex}For
a positive integer $m$, let $\left\{ U_{i}\right\} _{i=1}^{m}$ be
a finite family of $\lambda_{i}$-relaxed firmly nonexpansive operators,
where $U_{i}:\mathcal{H\rightarrow\mathcal{H}}$ and $\lambda_{i}\in\left(0,2\right]$
for each $i=1,2,\dots,m$. Then:
\begin{enumerate}
\item For each finite set of numbers $\left\{ \omega_{i}\right\} _{i=1}^{m}\subset\left[0,1\right]$
such that $\sum_{i=1}^{m}\omega_{i}=1$, the convex combination $U:=\sum_{i=1}^{m}\omega_{i}U_{i}$
is $\sum_{i=1}^{m}\omega_{i}\lambda_{i}$-relaxed firmly nonexpansive.
\item The composition $V:=U_{m}\cdots U_{2}U_{1}$ is $\lambda$-relaxed
firmly nonexpansive, where
\[
\left(0,2\right]\ni\lambda=\begin{cases}
2, & \mathrm{if}\,\max_{i\in\left\{ 1,2,\dots,m\right\} }=2,\\
2\left(\left(\sum_{i=1}^{m}\lambda_{i}\left(2-\lambda_{i}\right)^{-1}\right)^{-1}+1\right)^{-1}, & \mathrm{otherwise}.
\end{cases}
\]
\end{enumerate}
\end{thm}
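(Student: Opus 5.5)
The statement is the standard convex-combination and composition result for relaxed firmly nonexpansive operators. The plan is to reduce everything to the characterization in Theorem \ref{FNE_relax_FNE}: an operator $W$ is $\mu$-relaxed firmly nonexpansive, with $\mu\in(0,2]$, if and only if $W$ is $(2-\mu)\mu^{-1}$-firmly nonexpansive. Equivalently, and more conveniently for our purposes, $W$ is $\mu$-relaxed firmly nonexpansive if and only if
\[
\left\Vert W(x)-W(y)\right\Vert ^{2}+\tfrac{2-\mu}{\mu}\left\Vert (\mathrm{Id}-W)(x)-(\mathrm{Id}-W)(y)\right\Vert ^{2}\le\left\Vert x-y\right\Vert ^{2}.
\]
Writing $W=\mathrm{Id}-\mu G$ with $G$ firmly nonexpansive, this is the same as
\[
\left\langle (\mathrm{Id}-W)x-(\mathrm{Id}-W)y,x-y\right\rangle \ge\tfrac{1}{\mu}\left\Vert (\mathrm{Id}-W)x-(\mathrm{Id}-W)y\right\Vert ^{2}.
\]
That is, $\mu$-relaxed firm nonexpansiveness means $\mathrm{Id}-W$ is $\frac{1}{\mu}$-cocoercive. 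I would first record this equivalence; it is a one-line expansion of the square.

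\textbf{Part (i).} Set $D_i:=\mathrm{Id}-U_i$, which is $\lambda_i^{-1}$-cocoercive, and put $D:=\mathrm{Id}-U=\sum_i\omega_iD_i$ and $\lambda:=\sum_i\omega_i\lambda_i$. The required inequality is
\[
\langle Dx-Dy,x-y\rangle\ge\lambda^{-1}\Vert Dx-Dy\Vert^2 .
\]
Summing the cocoercivity inequalities gives $\langle Dx-Dy,x-y\rangle\ge\sum_i\omega_i\lambda_i^{-1}\Vert D_ix-D_iy\Vert^2$. Cauchy--Schwarz in the form
\[
\Big\Vert\sum_i\omega_i a_i\Big\Vert^2=\Big\Vert\sum_i(\omega_i\lambda_i)^{1/2}(\omega_i/\lambda_i)^{1/2}a_i\Big\Vert^2\le\Big(\sum_i\omega_i\lambda_i\Big)\sum_i\omega_i\lambda_i^{-1}\Vert a_i\Vert^2
\]
then gives the claim. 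Indices with $\omega_i=0$ are simply dropped.

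\textbf{Part (ii).} I would use the $\rho$-firm nonexpansiveness form, with $\rho_i=(2-\lambda_i)/\lambda_i$, and induct on $m$, so the essential case is $m=2$. Take $V=U_2U_1$ and write $a:=(\mathrm{Id}-U_1)x-(\mathrm{Id}-U_1)y$ and $b:=(\mathrm{Id}-U_2)U_1x-(\mathrm{Id}-U_2)U_1y$. Then $(\mathrm{Id}-V)x-(\mathrm{Id}-V)y=a+b$. Chaining the two inequalities yields
\[
\Vert Vx-Vy\Vert^2\le\Vert x-y\Vert^2-\rho_1\Vert a\Vert^2-\rho_2\Vert b\Vert^2 .
\]
By the weighted bound $\Vert a+b\Vert^2\le(\rho_1^{-1}+\rho_2^{-1})(\rho_1\Vert a\Vert^2+\rho_2\Vert b\Vert^2)$, we get
\[
\rho_1\Vert a\Vert^2+\rho_2\Vert b\Vert^2\ge\rho\Vert a+b\Vert^2,\qquad \rho^{-1}=\rho_1^{-1}+\rho_2^{-1}.
\]
Inductively, $\rho^{-1}=\sum_i\rho_i^{-1}=\sum_i\lambda_i(2-\lambda_i)^{-1}$. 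Converting back through $\lambda=2/(\rho+1)$ gives exactly the stated formula $\lambda=2\big((\sum_i\lambda_i(2-\lambda_i)^{-1})^{-1}+1\big)^{-1}$. Finally, Theorem \ref{FNE_relax_FNE} (with $\lambda\in(0,2)$) turns $\rho$-firm nonexpansiveness into $\lambda$-relaxed firm nonexpansiveness.

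\textbf{Degenerate cases and the main obstacle.} Some $\lambda_i=2$ means $\rho_i=0$. In that case only nonexpansiveness of $V$ is available, and nonexpansive is exactly $2$-relaxed firmly nonexpansive by Corollary \ref{cor:=000020frm-nexp}, which matches the first case of the formula. The main obstacle I expect is not the inequalities but this bookkeeping: checking the algebraic conversion between $\rho$ and $\lambda$ at each inductive step, and handling the boundary value $\lambda_i=2$ together with the convention that $\rho=0$ corresponds to $\lambda=2$.
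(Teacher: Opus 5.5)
Your proof is correct. The boundary case is handled properly: when some $\lambda_i=2$ you fall back on nonexpansiveness of the composition and Corollary \ref{cor:=000020frm-nexp}. Otherwise $\lambda=2/(1+\rho)\in(0,2)$ with $\rho^{-1}=\sum_i\lambda_i(2-\lambda_i)^{-1}$ reproduces the stated formula.

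The paper gives no proof of this statement; it imports it from \cite[Theorems 2.2.35 and 2.2.42]{C_book}. So there is nothing in the text to compare against, and your argument works as a self-contained proof.

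Your part (ii) is the standard route. You telescope the $\rho_i$-firm nonexpansiveness inequalities along the composition and apply $\Vert\sum_i a_i\Vert^2\le(\sum_i\rho_i^{-1})(\sum_i\rho_i\Vert a_i\Vert^2)$. The constant $(\sum_i\rho_i^{-1})^{-1}$ this yields is never smaller than the $\min_i\rho_i/m$ used in Corollary \ref{comp_conv_comb_firm}(ii).

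For part (i) there is a shorter route that avoids Cauchy--Schwarz. Write $U_i=\mathrm{Id}+\lambda_i(G_i-\mathrm{Id})$ with $G_i$ firmly nonexpansive, and set $\lambda:=\sum_i\omega_i\lambda_i$. Then $U=\mathrm{Id}+\lambda(G-\mathrm{Id})$ with $G:=\sum_i\lambda^{-1}\omega_i\lambda_iG_i$. This $G$ is firmly nonexpansive because $2G-\mathrm{Id}=\sum_i\lambda^{-1}\omega_i\lambda_i(2G_i-\mathrm{Id})$ is a convex combination of nonexpansive operators (Corollary \ref{cor:=000020frm-nexp}).

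Your cocoercivity formulation costs one extra inequality. In exchange, it treats all $\lambda_i\in(0,2]$ uniformly and keeps (i) and (ii) in the same quantitative framework as Theorem \ref{FNE_relax_FNE}.
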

An immediate consequence of of Theorems \ref{FNE_relax_FNE} and \ref{frm-nonex}
is presented in the following corollary.
\begin{cor}
\label{comp_conv_comb_firm}For a positive integer $m$, let $\left\{ U_{i}\right\} _{i=1}^{m}$
be a finite family of $\rho_{i}$-firmly nonexpansive operators, where
$U_{i}:\mathcal{H\rightarrow\mathcal{H}}$ and $\rho_{i}\in\left[0,\infty\right)$
for each $i=1,2,\dots,m$. Define $\rho:=\min_{i\in\left\{ 1,2,\dots,m\right\} }\rho_{i}$.
Then:
\begin{enumerate}
\item For each finite set of numbers $\left\{ \omega_{i}\right\} _{i=1}^{m}\subset\left[0,1\right]$
such that $\sum_{i=1}^{m}\omega_{i}=1$, the convex combination $U:=\sum_{i=1}^{m}\omega_{i}U_{i}$
is $\rho$- firmly nonexpansive.
\item The composition $V:=U_{m}\cdots U_{2}U_{1}$ is $\rho m^{-1}$- firmly
nonexpansive.
\end{enumerate}
\end{cor}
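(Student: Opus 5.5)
The plan is to translate between $\rho$-firmly nonexpansive operators and relaxed firmly nonexpansive operators using Theorem \ref{FNE_relax_FNE}, apply Theorem \ref{frm-nonex}, and translate back. For $\rho\ge0$ set $\mu(\rho):=2(1+\rho)^{-1}\in(0,2]$. Solving $(2-\lambda)\lambda^{-1}=\rho$ gives exactly $\lambda=\mu(\rho)$. So by Theorem \ref{FNE_relax_FNE} (equivalently Corollary \ref{cor:=000020frm-nexp}), an operator is $\rho$-firmly nonexpansive if and only if it is $\mu(\rho)$-relaxed firmly nonexpansive. Indeed, if $U$ is $\rho$-firmly nonexpansive, then $W:=U_{(1+\rho)/2}$ is firmly nonexpansive and $U=W_{\mu(\rho)}$. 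Since $\lambda\mapsto(2-\lambda)\lambda^{-1}$ is decreasing on $(0,2]$, we also get monotonicity: a $\lambda$-relaxed firmly nonexpansive operator is $\rho'$-firmly nonexpansive for every $\rho'\le(2-\lambda)\lambda^{-1}$.

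For (i), write $\lambda_i:=\mu(\rho_i)$, so each $U_i$ is $\lambda_i$-relaxed firmly nonexpansive. Theorem \ref{frm-nonex}(i) gives that $U$ is $\bar\lambda$-relaxed firmly nonexpansive with $\bar\lambda=\sum\omega_i\lambda_i\le\max_i\lambda_i=\mu(\rho)$. By Theorem \ref{FNE_relax_FNE}, $U$ is then $(2-\bar\lambda)\bar\lambda^{-1}$-firmly nonexpansive. This constant is at least $(2-\mu(\rho))\mu(\rho)^{-1}=\rho$. Finally, $\rho'$-firm nonexpansiveness trivially implies $\rho$-firm nonexpansiveness for $\rho\le\rho'$, since the subtracted term is nonnegative. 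Zero weights cause no trouble because those terms simply drop out.

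For (ii), if some $\rho_i=0$ then $\rho=0$, and the claim reduces to nonexpansiveness of a composition of nonexpansive maps, which is immediate. Otherwise all $\lambda_i<2$, and $\lambda_i(2-\lambda_i)^{-1}=\rho_i^{-1}$. Theorem \ref{frm-nonex}(ii) gives $V$ as $\lambda$-relaxed firmly nonexpansive with
\[
\lambda=2\Bigl(\bigl(\textstyle\sum_{i}\rho_i^{-1}\bigr)^{-1}+1\Bigr)^{-1}=\mu(\sigma),\qquad \sigma:=\bigl(\textstyle\sum_{i=1}^{m}\rho_i^{-1}\bigr)^{-1}.
\]
Hence $V$ is $\sigma$-firmly nonexpansive. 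Since $\sum_i\rho_i^{-1}\le m\rho^{-1}$, we get $\sigma\ge\rho m^{-1}$, and the claim follows by the same monotonicity.

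The main obstacle is bookkeeping rather than substance. We must check that the formula $\mu$ correctly inverts the relaxation correspondence, including the endpoint case $\lambda=2$ versus $\rho=0$. We must also make sure the inequalities on the constants go in the right direction.
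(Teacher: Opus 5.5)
Your proposal is correct and follows the same route as the paper, which presents this corollary as an immediate consequence of Theorems \ref{FNE_relax_FNE} and \ref{frm-nonex}. You have made explicit the details the paper leaves implicit: the dictionary $\lambda=2(1+\rho)^{-1}$, the direction of the inequalities on the constants, and the degenerate case $\rho_i=0$ (equivalently $\lambda_i=2$) in the composition formula.
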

\begin{example}
\label{metric=000020projection}Given a nonempty, closed and convex
subset $C$ of $\mathcal{H}$, the metric projection $P_{C}$ onto
$C$ is firmly nonexpansive (see, for instance, Theorem 2.2.21 in
\cite{C_book}) and hence, it is a nonexpansive cutter (by Theorem
\ref{thm:2.2.5}). Moreover, $\mathrm{Fix}P_{C}=C$.
\end{example}
\begin{prop}
[{\cite[Propositions 4.5 and 4.6]{CiegZ_app_s}}]\label{Inequalities=000020for=000020compositions=000020and=000020convex=000020combinations}
Let $m$ be a positive integer. For each $i=1,2,\dots,m$ , let $\rho_{i}>0$
and assume that $U_{i}:\mathscr{\mathcal{H}\rightarrow\mathcal{H}}$
is a $\rho_{i}$-strongly quasi nonexpansive operator. Suppose that
$\bigcap_{i=1}^{m}\mathrm{Fix}U_{i}\not=\emptyset$. Let $U:=\sum_{i=1}^{m}\omega_{i}U_{i}$,
where $\omega_{i}\in\left(0,1\right]$ for each $i=1,2,\dots,m$ and
$\sum_{i=1}^{m}\omega_{i}=1$, be a convex combination of $\left\{ U_{i}\right\} _{i=1}^{m}$
and let $V:=U_{m}U_{m-1}\cdots U_{1}$ be a composition of $\left\{ U_{i}\right\} _{i=1}^{m}$.
Moreover, let $x\in\mathscr{\mathcal{H}}$ and let $z\in\bigcap_{i=1}^{m}\mathrm{Fix}U_{i}$
be arbitrary. Then the following inequalities hold:
\[
\frac{1}{2R}\sum_{i=1}^{m}\omega_{i}\rho_{i}\left\Vert U_{i}x-x\right\Vert ^{2}\le\left\Vert Ux-x\right\Vert 
\]
and
\[
\frac{1}{2R}\sum_{i=1}^{m}\rho_{i}\left\Vert S_{i}x-S_{i-1}x\right\Vert ^{2}\le\left\Vert Vx-x\right\Vert ,
\]
for any positive $R\ge\left\Vert x-z\right\Vert $, where $S_{i}=\prod_{j=1}^{i}U_{j}=U_{i}U_{i-1}\cdots U_{1}$
for each $i=1,2,\dots,m$ (by definition, the composition $S_{0}=Id$).
\end{prop}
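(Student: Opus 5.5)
We prove the two inequalities separately. Both use only the $\rho_{i}$-strong quasi-nonexpansivity of each $U_{i}$ at the common fixed point $z$ and the bound $\left\Vert x-z\right\Vert \le R$.

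\emph{Convex combination.} We would first convert strong quasi-nonexpansivity into an inner-product inequality. For each $i$, expand $\left\Vert U_{i}x-z\right\Vert ^{2}=\left\Vert (U_{i}x-x)+(x-z)\right\Vert ^{2}$. The defining inequality then becomes
\[
\frac{1+\rho_{i}}{2}\left\Vert U_{i}x-x\right\Vert ^{2}\le\left\langle z-x,U_{i}x-x\right\rangle ,
\]
and in particular $\frac{\rho_{i}}{2}\left\Vert U_{i}x-x\right\Vert ^{2}\le\left\langle z-x,U_{i}x-x\right\rangle $. Multiplying by $\omega_{i}$ and summing, the right-hand side becomes $\left\langle z-x,Ux-x\right\rangle $, since $Ux-x=\sum_{i}\omega_{i}(U_{i}x-x)$. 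Cauchy--Schwarz bounds this by $R\left\Vert Ux-x\right\Vert $. Dividing by $R$ gives the first inequality.

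\emph{Composition.} Set $y_{i}:=S_{i}x$, so that $y_{0}=x$ and $y_{m}=Vx$. Applying strong quasi-nonexpansivity of $U_{i}$ at the point $y_{i-1}$ gives
\[
\left\Vert y_{i}-z\right\Vert ^{2}\le\left\Vert y_{i-1}-z\right\Vert ^{2}-\rho_{i}\left\Vert y_{i}-y_{i-1}\right\Vert ^{2}.
\]
Telescoping over $i=1,\dots,m$ yields
\[
\sum_{i=1}^{m}\rho_{i}\left\Vert S_{i}x-S_{i-1}x\right\Vert ^{2}\le\left\Vert x-z\right\Vert ^{2}-\left\Vert Vx-z\right\Vert ^{2}.
\]
We then factor the right-hand side as a difference of squares:
\[
\left\Vert x-z\right\Vert ^{2}-\left\Vert Vx-z\right\Vert ^{2}=\bigl(\left\Vert x-z\right\Vert -\left\Vert Vx-z\right\Vert \bigr)\bigl(\left\Vert x-z\right\Vert +\left\Vert Vx-z\right\Vert \bigr).
\]
By the triangle inequality, the first factor is at most $\left\Vert Vx-x\right\Vert $. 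The same telescoping (with $\rho_{i}\ge0$) shows $\left\Vert Vx-z\right\Vert \le\left\Vert x-z\right\Vert \le R$, so the second factor is at most $2R$. Dividing by $2R$ gives the second inequality.

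\emph{Sign and degenerate cases.} The first factor might be negative, but that case cannot occur: the left-hand side of the telescoped inequality is nonnegative, and the second factor is positive whenever $x\ne z$. Hence the first factor is nonnegative, and bounding each factor separately is legitimate. If $x=z$, every term vanishes and both inequalities hold trivially.

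The only step needing some care is the composition case: the telescoping itself is routine, and the point to get right is this factorization together with the sign check. The convex-combination case is a direct computation once the inner-product form of strong quasi-nonexpansivity is written down.
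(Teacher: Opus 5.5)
The paper does not prove this proposition; it quotes it from \cite{CiegZ_app_s}, so there is no in-paper argument to compare against. Your proof is correct and complete.

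For the composition, your argument is the natural one. You telescope the strong quasi-nonexpansivity inequalities along $S_0x,\dots,S_mx$, then bound $\|x-z\|^2-\|Vx-z\|^2$ by $2R\|Vx-x\|$ via the difference of squares.

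For the convex combination, you take a slightly different route from the one that mirrors the composition case. That route would use convexity of $\|\cdot\|^2$ to get $\|Ux-z\|^2\le\|x-z\|^2-\sum_i\omega_i\rho_i\|U_ix-x\|^2$, then apply the same difference-of-squares bound. Instead, you rewrite strong quasi-nonexpansivity in the inner-product form $\frac{1+\rho_i}{2}\|U_ix-x\|^2\le\langle z-x,U_ix-x\rangle$. You then use the fact that $Ux-x=\sum_i\omega_i(U_ix-x)$ is linear in the increments, and finish with Cauchy--Schwarz.

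Your route gives a marginally sharper estimate, with the factor $\frac{1+\rho_i}{2}$ in place of $\frac{\rho_i}{2}$. The difference-of-squares route has the advantage of treating both cases with a single estimate. Your linearity trick does not transfer directly to compositions, because the increments $S_ix-S_{i-1}x$ are based at different points $S_{i-1}x$.

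One small simplification: your sign discussion is unnecessary. Write $a:=\|x-z\|-\|Vx-z\|$ and $b:=\|x-z\|+\|Vx-z\|$. Since $b\ge0$, you already have $ab\le\|Vx-x\|\,b\le2R\|Vx-x\|$ whatever the sign of $a$.
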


\subsection{\protect\label{subsec:The-general-bounded}The general bounded regularity
and approximate shrinking properties of operators}

The notions of bounded regularity and approximate shrinking are needed
for establishing the strong convergence of the methods which we discuss
in this paper. The bounded regularity of a finite family of sets was
examined in \cite[Section 5]{BB96} and \cite{BB93}. This property
was extended to an infinite family of sets in \cite{GDSA_inconsist}.
We recall it briefly below.
\begin{defn}
\label{bound_reg}For a nonempty index set $I$ (either finite or
infinite), the family $\left\{ C_{i}\right\} _{i\in I}$ of nonempty,
closed and convex subsets of $\mathcal{H}$ with nonempty intersection
$C$ is \textit{boundedly regular }if for any bounded sequence $\left\{ x^{k}\right\} _{k=0}^{\infty}$
in $\mathcal{H}$, the following implication holds:\textit{
\[
\lim_{k\rightarrow\infty}d\left(x^{k},C_{i}\right)=0\,\,\mathrm{for}\,\,\mathrm{each\,\,}i\in I\,\,\Longrightarrow\,\,\lim_{k\rightarrow\infty}d\left(x^{k},C\right)=0.
\]
}
\end{defn}
The next proposition establishes sufficient conditions for bounded
regularity. Recall that a topological space $X$ is locally compact
if each $x\in X$ has a compact neighborhood with respect to the topology
inherited from $X$.
\begin{prop}
[{\cite[Proposition 3.2]{GDSA_inconsist}}]\label{bounded_reg}Let
$\left\{ C_{i}\right\} _{i\in I}$ be a family of nonempty, closed
and convex subsets of $\mathcal{H}$ with a nonempty intersection
$C$. Then the following assertions hold:
\begin{enumerate}
\item If there is an $i_{0}\in I$ for which the set $C_{i_{0}}$ is a locally
compact topological space (with respect to the norm topology inherited
from $\mathcal{H}$), then the family $\left\{ C_{i}\right\} _{i\in I}$
is boundedly regular.
\item If $\mathcal{H}$ is of finite dimension, then the family $\left\{ C_{i}\right\} _{i\in I}$
is boundedly regular.
\end{enumerate}
\end{prop}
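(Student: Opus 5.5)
Assertion (ii) reduces to assertion (i), so I will prove (i) first. If $\mathcal{H}$ is finite dimensional, every closed subset of $\mathcal{H}$ is locally compact: a closed ball intersected with a closed set is closed and bounded, hence compact by Heine--Borel. Any $C_{i_{0}}$ then satisfies the hypothesis of (i).

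For (i), I argue by contradiction. Let $\{x^{k}\}_{k=0}^{\infty}$ be bounded with $d(x^{k},C_{i})\to0$ for every $i\in I$, and suppose $d(x^{k},C)\not\to0$. Passing to a subsequence, we may assume $d(x^{k},C)\ge\delta>0$ for all $k$.

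Now set $y^{k}:=P_{C_{i_{0}}}x^{k}$. Then $\|y^{k}-x^{k}\|=d(x^{k},C_{i_{0}})\to0$, so $\{y^{k}\}$ is bounded and lies in $C_{i_{0}}$.

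The key step, and the main obstacle, is to extract a norm-convergent subsequence of $\{y^{k}\}$. Local compactness only gives compact neighbourhoods of individual points, not compactness of bounded sets, so this needs care. The tool I would use is the standard fact that a locally compact convex subset of a normed space is finite dimensional in the following sense: its affine hull $A$ is a finite-dimensional affine subspace, and $C_{i_{0}}$ is closed in $A$.

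To prove this fact, I would take $c\in C_{i_{0}}$ and a compact neighbourhood $K=\overline{B}(c,r)\cap C_{i_{0}}$. Assume the linear span $L$ of $C_{i_{0}}-c$ is infinite dimensional. Then there are points $c+v_{n}\in C_{i_{0}}$ with the $v_{n}$ linearly independent. By convexity, the segments from $c$ to $c+v_{n}$ lie in $C_{i_{0}}$, so $c+t_{n}v_{n}\in K$ for small scalars $t_{n}>0$. Applying Gram--Schmidt, or Riesz's lemma, I would choose these points to be $\eta$-separated for some fixed $\eta>0$. This contradicts compactness of $K$, so $L$ is finite dimensional.

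With that fact, $\{y^{k}\}$ is a bounded sequence in the finite-dimensional space $A$, and $C_{i_{0}}$ is closed there. Hence a subsequence satisfies $y^{k_{j}}\to y\in C_{i_{0}}$, and so $x^{k_{j}}\to y$ as well.

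By continuity of the distance function, $d(y,C_{i})=\lim_{j}d(x^{k_{j}},C_{i})=0$ for every $i\in I$. Each $C_{i}$ is closed, so $y\in C_{i}$ for all $i$, which gives $y\in C$. Therefore $d(x^{k_{j}},C)\le\|x^{k_{j}}-y\|\to0$, contradicting $d(x^{k},C)\ge\delta$. This proves (i), and (ii) follows from the reduction in the first paragraph.
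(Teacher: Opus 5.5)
Your overall architecture is sound: argue by contradiction, project onto $C_{i_0}$, extract a convergent subsequence, and pass to the limit using closedness of each $C_i$. The reduction of (ii) to (i) is also fine. The gap is in the key step, because the ``standard fact'' you invoke is false. A locally compact convex subset of a Hilbert space need not have finite-dimensional affine hull. For instance, $Q:=\{x\in\ell^{2}\mid |x_{n}|\le 1/n\}$ (for all $n$) is closed, convex and totally bounded, hence compact and certainly locally compact. Yet it contains $n^{-1}e_{n}$ for every $n$, so its span is infinite dimensional. Your sketched proof breaks exactly at the separation step. The scalars $t_{n}$ with $c+t_{n}v_{n}\in C_{i_0}$ may be forced to tend to $0$: in $Q$ with $c=0$ and $v_{n}=e_{n}$ you need $t_{n}\le 1/n$. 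Gram--Schmidt or Riesz's lemma cannot rescue this, because the orthonormalized directions need not be directions in which $C_{i_0}$ extends any fixed distance from $c$. To obtain $\eta$-separated points in $C_{i_0}\cap\overline{B}(c,r)$ this way, you would need $C_{i_0}-c$ to contain a fixed multiple of the unit ball of an infinite-dimensional subspace. In infinite dimensions a closed convex set can fail this badly, having empty interior even relative to its closed affine hull, as $Q$ does.

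The missing idea is to use convexity through a homothety rather than through dimension. Fix $c\in C_{i_0}$ and $r>0$ such that $K:=C_{i_0}\cap\overline{B}(c,r)$ is compact; this follows from local compactness at $c$. For $R>r$ and $x\in C_{i_0}\cap\overline{B}(c,R)$, the point $c+\frac{r}{R}(x-c)$ lies in $C_{i_0}$ by convexity and in $\overline{B}(c,r)$, hence in $K$. Therefore $C_{i_0}\cap\overline{B}(c,R)\subseteq c+\frac{R}{r}(K-c)$, which is compact, and being closed, $C_{i_0}\cap\overline{B}(c,R)$ is itself compact. Choose $R$ large enough that all $y^{k}$ lie in $\overline{B}(c,R)$. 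This yields the norm-convergent subsequence $y^{k_j}\to y\in C_{i_0}$ that you need, and the rest of your argument goes through unchanged.
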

We also use the concept of approximate shrinking, which has been the
subject of extensive study in \cite{CiegZ_app_s}.
\begin{defn}
A quasi-nonexpansive operator $T:\mathcal{H}\rightarrow\mathcal{H}$
\textit{is approximately shrinking} if for each bounded sequence $\left\{ x^{k}\right\} _{k=0}^{\infty}$
in $\mathcal{H}$, the following implication holds:
\[
\lim_{k\rightarrow\infty}\left\Vert T\left(x^{k}\right)-x^{k}\right\Vert =0\,\,\Longrightarrow\,\,\lim_{k\rightarrow\infty}d\left(x^{k},\mathrm{Fix}T\right)=0.
\]
\end{defn}
\begin{example}
\label{ex_as}Given a nonempty, closed and convex subset $C$ of $\mathcal{H}$,
the metric projection $P_{C}$ onto $C$ is approximately shrinking
(see Example 3.5 in \cite{CiegZ_app_s}).
\end{example}

\subsection{\protect\label{subsec:Additional-notions-and}Additional notions
and results which we use in our work}
\begin{defn}
For a nonempty, closed and convex subset $C$ of $\mathcal{H}$, a
sequence $\left\{ x^{k}\right\} _{k=0}^{\infty}$ in $\mathcal{H}$
is
\begin{enumerate}
\item \textit{Fej{\'e}r monotone} w\textit{ith respect to $C$} if for
each $z\in C$ and each $k\in\mathbb{N}$,
\[
\left\Vert x^{k+1}-z\right\Vert \le\left\Vert x^{k}-z\right\Vert .
\]
\item \textit{Strictly Fej{\'e}r monotone} \textit{with respect to $C$}
if for each $z\in C$ and each $k\in\mathbb{N}$,
\[
\left\Vert x^{k+1}-z\right\Vert <\left\Vert x^{k}-z\right\Vert .
\]
\item \textit{Strongly Fej{\'e}r monotone} \textit{with respect to $C$}
if there exists a constant $\alpha>0$ such that 
\[
\left\Vert x^{k+1}-z\right\Vert ^{2}\le\left\Vert x^{k}-z\right\Vert ^{2}-\alpha\left\Vert x^{k+1}-x^{k}\right\Vert ^{2}.
\]
for each $z\in C$ and each $k\in\mathbb{N}$.
\end{enumerate}
\end{defn}
For a discussion of properties of \textit{Fej{\'e}r} monotone sequences
see, for example, Subsection 3.3 in \cite{C_book}.
\begin{thm}
[{\cite[Theorem 2.16(v)]{BB96}}]\label{Fejer}Let $C$ be a nonempty,
closed and convex subset of $\mathcal{H}$. If $\left\{ x^{k}\right\} _{k=0}^{\infty}$
is Fej{\'e}r monotone with respect to $C$, then it converges in
the norm of $\mathcal{H}$ to some point in $C$ if and only if 
\[
\lim_{k\rightarrow\infty}d\left(x^{k},C\right)=0.
\]
\end{thm}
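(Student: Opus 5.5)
The statement is Theorem \ref{Fejer}: a Fej\'er monotone sequence with respect to $C$ converges strongly to a point of $C$ if and only if $d(x^{k},C)\to0$. I will prove the two directions separately, and the converse will come from showing the sequence is Cauchy.

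\emph{Necessity.} Suppose $x^{k}\rightarrow x\in C$. Then $d(x^{k},C)\le\left\Vert x^{k}-x\right\Vert \rightarrow0$, and nothing more is needed.

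\emph{Sufficiency.} Assume $d(x^{k},C)\rightarrow0$, and write $z^{k}:=P_{C}(x^{k})$; this is well defined because $C$ is nonempty, closed and convex. For $m\ge k$, iterating the Fej\'er inequality with the fixed point $z^{k}\in C$ gives
\[
\left\Vert x^{m}-z^{k}\right\Vert \le\left\Vert x^{k}-z^{k}\right\Vert =d(x^{k},C).
\]
The triangle inequality then yields $\left\Vert x^{m}-x^{k}\right\Vert \le2d(x^{k},C)$ for all $m\ge k$. Hence $\{x^{k}\}$ is Cauchy, and since $\mathcal{H}$ is complete it converges strongly to some $x\in\mathcal{H}$.

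It remains to show $x\in C$. The distance function $d(\cdot,C)$ is $1$-Lipschitz, hence continuous, so $d(x,C)=\lim_{k}d(x^{k},C)=0$. Because $C$ is closed, this gives $x\in C$.

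The only nontrivial step is the Cauchy estimate. It depends on fixing the anchor point $z^{k}$ once and then applying Fej\'er monotonicity for all later indices $m\ge k$; the projection is used only so that $\left\Vert x^{k}-z^{k}\right\Vert$ equals $d(x^{k},C)$ exactly.
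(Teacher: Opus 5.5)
Your proof is correct and complete. The necessity direction is immediate. The Cauchy estimate $\Vert x^{m}-x^{k}\Vert\le 2d(x^{k},C)$ for $m\ge k$ follows, exactly as you say, from anchoring the Fej\'er inequality at $z^{k}=P_{C}(x^{k})$. Completeness of $\mathcal{H}$, continuity of $d(\cdot,C)$ and closedness of $C$ then place the limit in $C$.

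The paper gives no proof of Theorem \ref{Fejer}; it imports it from \cite[Theorem 2.16(v)]{BB96}. So there is no in-paper argument to compare with, and yours is the standard direct proof of this equivalence. Two small additions come for free from your argument. Letting $m\to\infty$ in your estimate gives the rate bound $\Vert x^{k}-x\Vert\le 2d(x^{k},C)$. And convexity is used only to make $P_{C}$ available: for a merely closed $C$, choosing $z^{k}\in C$ with $\Vert x^{k}-z^{k}\Vert\le d(x^{k},C)+2^{-k}$ gives the same conclusion.
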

The next definition is fundamental in the analysis of the superiorization
methodology. It defines bounded perturbations resilience of an iterative
algorithm that is governed by an infinite sequence of algorithmic
operators. See \cite{BuRZ1}, where it was first shown that if the
exact iterates of a nonexpansive operator converge, then the inexact
iterates, subject to summable errors, also converge.
\begin{defn}
[{\cite[Definition 2.24]{GDSA_inconsist}}]Let $\Gamma\subseteq\mathcal{H}$
be a given nonempty subset of $\mathcal{H}$ and $\left\{ T_{k}\right\} _{k=0}^{\infty}$
be a sequence of operators, $T_{k}:\mathcal{H}\rightarrow\mathcal{H}$
for each $k\in\mathbb{N}$. The algorithm $x^{k+1}:=T_{k}(x^{k}),$
for all $k\in\mathbb{N},$ is said to be\textit{ bounded perturbations
resilient} with respect to\textbf{ $\Gamma$}\emph{ }if the following
is true: If a sequence $\{x^{k}\}_{k=0}^{\infty},$ generated by the
algorithm, converges in the norm of $\mathcal{H}$ to a point in $\Gamma$
for all $x^{0}\in\mathcal{H}$, then any sequence $\{y^{k}\}_{k=0}^{\infty}$
in $\mathcal{H}$ that is generated by the algorithm $y^{k+1}:=T_{k}(y^{k}+\beta_{k}v^{k}),$
for all $k\in\mathbb{N},$ also converges in the norm of $\mathcal{H}$
to a point in $\Gamma$ for all $y^{0}\in\mathcal{H},$ provided that
$\{\beta_{k}v^{k}\}_{k=0}^{\infty}$ are bounded perturbations, meaning
that $\left\{ \beta_{k}\right\} _{k=0}^{\infty}$ is a sequence of
positive real numbers such that $\sum_{k=0}^{\infty}\beta_{k}<\infty$
and that the vector sequence $\{v^{k}\}_{k=0}^{\infty}$ is a bounded
sequence in $\mathcal{H}$.
\end{defn}
\begin{rem}
In this work we consider inner perturbations, where bounded perturbations
are applied before the evaluation of the underlying operators. An
alternative perturbation model consists of outer perturbations in
which summable error terms are added to the operator outputs; such
perturbations naturally model inexact operator evaluations and have
been discussed in relation to the superiorization method in Section
8.1 in \cite{CensorReem2015}, see Proposition 5 therein. Further
results on the the Krasnosel'ski\u{\i}-Mann iterative method with
perturbations can be found in \cite{Dong2022}. Outer perturbations
in projection methods for the split equality problem appeared in \cite{DongLiHe2018}.
Although inner and outer perturbations are defined differently, under
the nonexpansiveness assumptions imposed throughout our paper they
are asymptotically equivalent in the sense that both lead to the same
convergence behavior of the generated iterates. This shows that the
present analysis is consistent and compatible with results established
in the outer-perturbation framework.
\end{rem}
\begin{thm}
[{\cite[Theorems 3.2 and 5.2]{BuRZ}}]\label{error}Let $C\subset\mathcal{H}$
be a nonempty and closed subset. Let \textup{$\left\{ T_{k}\right\} _{k=0}^{\infty}$},
$T_{k}:\mathscr{\mathcal{H}\rightarrow\mathcal{H}}$ for each $k\in\mathbb{N}$,
be a sequence of nonexpansive operators satisfying $C\subset\cap_{k=0}^{\infty}\mathrm{Fix}T_{k}$.
Assume that for each $y\in\mathcal{H}$ and each $q\in\mathbb{N}$,
the sequence $\left\{ T_{q+k}\cdots T_{q+1}T_{q}\left(y\right)\right\} _{k=0}^{\infty}$
converges in the norm of $\mathcal{H}$ to an element of $C$. Let
$\left\{ \gamma_{k}\right\} _{k=0}^{\infty}\subset\left[0,\infty\right)$
be a real sequence such that $\sum_{\text{k}=0}^{\infty}\gamma_{k}<\infty$
and let $\left\{ y^{k}\right\} _{k=0}^{\infty}\subset\mathcal{H}$.
Further assume that for each $k\in\mathbb{N}$,
\[
\left\Vert y^{k+1}-T_{k}\left(y^{k}\right)\right\Vert \le\gamma_{k}.
\]
Then the sequence $\left\{ y^{k}\right\} _{k=0}^{\infty}$converges
in the norm of $\mathcal{H}$ to an element of $C$.
\end{thm}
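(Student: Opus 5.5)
The plan is to compare the inexact trajectory $\left\{ y^{k}\right\} _{k=0}^{\infty}$ with exact trajectories started at later times from the current inexact iterate. Fix $q\in\mathbb{N}$ and define the exact orbit issued from $y^{q}$ at time $q$ by $z^{q,q}:=y^{q}$ and $z^{q,k+1}:=T_{k}\left(z^{q,k}\right)$ for $k\ge q$. Then $z^{q,q+k+1}=T_{q+k}\cdots T_{q+1}T_{q}\left(y^{q}\right)$. By the hypothesis, applied with $y:=y^{q}$ and this $q$, the sequence $\left\{ z^{q,k}\right\} _{k\ge q}$ converges in norm to some point $c_{q}\in C$.

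The key estimate, proved by induction on $k\ge q$, is
\[
\left\Vert y^{k}-z^{q,k}\right\Vert \le\sum_{j=q}^{k-1}\gamma_{j}.
\]
The case $k=q$ is trivial. For the inductive step, use the triangle inequality and the nonexpansiveness of $T_{k}$:
\[
\left\Vert y^{k+1}-z^{q,k+1}\right\Vert \le\left\Vert y^{k+1}-T_{k}\left(y^{k}\right)\right\Vert +\left\Vert T_{k}\left(y^{k}\right)-T_{k}\left(z^{q,k}\right)\right\Vert \le\gamma_{k}+\left\Vert y^{k}-z^{q,k}\right\Vert .
\]
Letting $k\rightarrow\infty$ then gives $\limsup_{k\rightarrow\infty}\left\Vert y^{k}-c_{q}\right\Vert \le\sum_{j=q}^{\infty}\gamma_{j}=:\delta_{q}$. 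Since $\sum_{k=0}^{\infty}\gamma_{k}<\infty$, we have $\delta_{q}\rightarrow0$ as $q\rightarrow\infty$.

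From this, $\left\{ y^{k}\right\} _{k=0}^{\infty}$ is Cauchy. Given $\epsilon>0$, choose $q$ with $\delta_{q}<\epsilon$. For all sufficiently large $k,l$ we then have
\[
\left\Vert y^{k}-y^{l}\right\Vert \le\left\Vert y^{k}-c_{q}\right\Vert +\left\Vert c_{q}-y^{l}\right\Vert <2\epsilon+o\left(1\right).
\]
By completeness of $\mathcal{H}$, $y^{k}\rightarrow y^{*}$ for some $y^{*}\in\mathcal{H}$. Moreover, $d\left(y^{*},C\right)\le\left\Vert y^{*}-c_{q}\right\Vert \le\delta_{q}$ for every $q$, so $d\left(y^{*},C\right)=0$. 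Since $C$ is closed, $y^{*}\in C$.

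The only delicate point is setting up the comparison orbits correctly. The hypothesis has to be used with a time-shifted starting index $q$ and the random starting point $y^{q}$, which is exactly why the assumption is stated for every $y\in\mathcal{H}$ and every $q\in\mathbb{N}$. Once this is in place, the inductive error bound is routine. The inclusion $C\subset\cap_{k}\mathrm{Fix}T_{k}$ is not needed in this argument.
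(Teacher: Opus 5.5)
Your argument is correct. The paper does not prove this statement itself but imports it from Butnariu--Reich--Zaslavski, and your proof is essentially the standard argument behind that result: compare $\{y^{k}\}$ with the exact orbit restarted from $y^{q}$ at time $q$, bound the deviation by the tail $\sum_{j\ge q}\gamma_{j}$ via nonexpansiveness, then conclude by the Cauchy criterion and the closedness of $C$. You are also right that the inclusion $C\subset\cap_{k}\mathrm{Fix}T_{k}$ plays no role in it.
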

The following well-known property of a convex and continuous function
will be used in the sequel.
\begin{thm}
[{\cite[Theorem 16.17(ii)]{BC_book}}]\label{subdiff_ne+b} Let $\phi:\mathcal{H}\rightarrow\mathbb{R}$
be convex and continuous function at the point $x\in\mathcal{H}$.
Then the subgradient set $\partial\phi\left(x\right)$ is nonempty.
\end{thm}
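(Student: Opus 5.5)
We have to prove Theorem \ref{subdiff_ne+b}: if $\phi:\mathcal{H}\rightarrow\mathbb{R}$ is convex and continuous at $x$, then $\partial\phi\left(x\right)\neq\emptyset$. The plan is the standard separation argument applied to the epigraph of $\phi$.

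First, set $\mathrm{epi}\,\phi:=\left\{ \left(y,t\right)\in\mathcal{H}\times\mathbb{R}\,|\,\phi\left(y\right)\le t\right\}$. This is a convex subset of the Hilbert space $\mathcal{H}\times\mathbb{R}$, because $\phi$ is convex. Continuity of $\phi$ at $x$ gives some $\delta>0$ with $\phi\left(y\right)<\phi\left(x\right)+1$ for all $y\in B\left(x,\delta\right)$. Hence the open set $B\left(x,\delta\right)\times\left(\phi\left(x\right)+1,\infty\right)$ lies in $\mathrm{epi}\,\phi$, so the interior of the epigraph is nonempty. The point $\left(x,\phi\left(x\right)\right)$ is not an interior point, since $\left(x,\phi\left(x\right)-\eta\right)\notin\mathrm{epi}\,\phi$ for every $\eta>0$.

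Second, apply the Hahn--Banach separation theorem, geometric form, in $\mathcal{H}\times\mathbb{R}$. It separates the point $\left(x,\phi\left(x\right)\right)$ from the nonempty open convex set $\mathrm{int}\,\mathrm{epi}\,\phi$. By the Riesz representation theorem this gives $\left(u,s\right)\neq\left(0,0\right)$ in $\mathcal{H}\times\mathbb{R}$ with
\[
\left\langle u,y\right\rangle +st\le\left\langle u,x\right\rangle +s\phi\left(x\right)\quad\text{for all }\left(y,t\right)\in\mathrm{epi}\,\phi .
\]
The inequality holds on the whole epigraph because, for a convex set with nonempty interior, the epigraph lies in the closure of its interior.

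Third, rule out $s\ge0$. If $s>0$, let $t\rightarrow\infty$ with $y=x$; this contradicts the inequality. If $s=0$, then $u\neq0$ and $\left\langle u,y-x\right\rangle \le0$ for all $y\in B\left(x,\delta\right)$. Taking $y=x+\frac{\delta}{2}u/\left\Vert u\right\Vert$ gives $\left\langle u,y-x\right\rangle=\frac{\delta}{2}\left\Vert u\right\Vert>0$, a contradiction. So $s<0$, and we normalize by dividing by $-s$. Put $g:=-u/s$ and take $t=\phi\left(y\right)$. The inequality becomes $\left\langle g,y-x\right\rangle \le\phi\left(y\right)-\phi\left(x\right)$ for all $y\in\mathcal{H}$, that is, $g\in\partial\phi\left(x\right)$.

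The main obstacle is showing that the separating functional is non-vertical, i.e.\ $s\neq0$. This is exactly where continuity at $x$ is used, through the ball $B\left(x,\delta\right)$. The rest is routine. Note that only continuity at the single point $x$ is needed, together with convexity and finiteness of $\phi$ everywhere.
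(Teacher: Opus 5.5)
Your proof is correct. The paper gives no argument of its own and only cites \cite[Theorem 16.17(ii)]{BC_book}; your supporting-hyperplane argument for $\mathrm{epi}\,\phi$ at $(x,\phi(x))$ is the standard proof of that result. One refinement: because $\phi$ is finite on all of $\mathcal{H}$, the case $s=0$ is excluded simply by taking $y=x+u$, so the step you flag as the main obstacle is automatic. The essential use of continuity is to get $\mathrm{int}\,\mathrm{epi}\,\phi\neq\emptyset$ for the separation; it cannot be dropped, since a discontinuous linear functional on an infinite-dimensional $\mathcal{H}$ is convex and real-valued yet has no subgradients.
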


\subsection{\protect\label{GMSA}The general modular string-averaging procedure}

The MSA procedure for a finite number of input operators was introduced
by Reich and Zalas in \cite{MSA}, based on the original string-averaging
scheme introduced by Censor, Elfving and Herman in \cite{CEH2001}.
It was later generalized by Barshad, Gibali and Reich to the GMSA
procedure in \cite{BGR2}, where an infinite number of input operators
along with further properties of this procedure were studied. Below
we recall some of these properties which we need for our results in
the sequel.

Assume that $\left\{ U_{n}\right\} _{n=0}^{\infty}$ is a sequence
of input operators and $\varepsilon\in\left(0,1\right]$. Let $\left\{ N_{k}\right\} _{k=0}^{\infty}$
be a sequence of positive integers which we call ``the sequence of
numbers of recursive steps''. For every $n\in\mathbb{N}$, define
$L_{n}:=\left\{ m\in\mathbb{Z}\mid\,m\le n\right\} $. For each $k\in\mathbb{N}$,
we consider the following settings:
\begin{itemize}
\item Let $c_{k}:\mathbb{N}\backslash\left\{ 0\right\} \cap L_{N_{k}}\rightarrow\left\{ 0,1,2\right\} $
be a function.
\item For each positive integer $n\in L_{N_{k}}$, let $J_{n}^{k}$ be a
nonempty finite subset of $L_{n-1}$ satisfying the following assertion:
if $n\in c_{k}^{-1}\left(\left\{ 0\right\} \right)$, then $\left|J_{n}^{k}\right|=1$
and $J_{n}^{k}\subset L_{0}$.
\item For each $n\in c_{k}^{-1}\left(\left\{ 0\right\} \right)$, let $j_{n}^{k}$
be the unique element of $J_{n}^{k}$ and define $P_{n}^{k}:=1$ .
\item For each $n\in c_{k}^{-1}\left(\left\{ 1\right\} \right)$, let $\omega_{n}^{k}:J_{n}^{k}\rightarrow\left[\varepsilon,1\right]$
be a function which we call ``a weight function'', satisfying $\sum_{j\in J_{n}^{k}}\omega_{n}^{k}\left(j\right)=1$,
and define $P_{n}^{k}:=\left|J_{n}^{k}\right|$.
\item For each $n\in c_{k}^{-1}\left(\left\{ 2\right\} \right)$, let $o_{n}^{k}:\left\{ 1,2,\dots,P_{n}^{k}\right\} \rightarrow J_{n}^{k}$
be a function onto $J_{n}^{k}$, which we call ``an order function'',
and $P_{n}^{k}\ge\left|J_{n}^{k}\right|$ is a positive integer.
\item Let $\alpha_{k}:c_{k}^{-1}\left(\left\{ 0\right\} \right)\rightarrow\left[\varepsilon,2-\varepsilon\right]$
be another function.
\item For each $n\in L_{N_{k}}$, we define a set $I_{n}^{k}\subset\mathbb{N}$
(recursively) by 
\begin{equation}
I_{n}^{k}:=\begin{cases}
\begin{split}\left\{ -n\right\} \end{split}
 & \mathrm{\,\,if}\,\,n\le0,\\
\cup_{j\in J_{n}^{k}}I_{j}^{k} & \mathrm{\,\,if}\,\,n>0.
\end{cases}\label{eq:}
\end{equation}
\end{itemize}
Now we are ready to describe our GMSA procedure.

For each $k\in\mathbb{N}$, define a sequence $\left\{ V_{n}^{k}\right\} _{n=-\infty}^{N_{k}}$
of intermediate modules $V_{n}^{k}$ (recursively) with respect to
$N_{k}$ recursive steps by 
\begin{equation}
V_{n}^{k}:=\begin{cases}
V_{n}^{k}:=U_{-n} & \mathrm{\,\,if}\,\,n\le0,\\
(\mathrm{Relaxation)\,\,}Id+\alpha_{k}\left(n\right)\left(V_{j_{n}^{k}}^{k}-Id\right), & \,\mathrm{\,if}\,\,n\in c_{k}^{-1}\left(\left\{ 0\right\} \right),\\
(\mathrm{Convex\,\,combination})\,\,\sum_{j\in J_{n}^{k}}\omega_{n}^{k}\left(j\right)V_{j}^{k}, & \,\,\mathrm{if}\,\,n\in c_{k}^{-1}\left(\left\{ 1\right\} \right),\\
\mathrm{(Composition)\,\,}V_{o_{n}^{k}\left(P_{n}^{k}\right)}^{k}\cdots V_{o_{n}^{k}\left(2\right)}^{k}V_{o_{n}^{k}\left(1\right)}^{k}, & \,\,\mathrm{if}\,\,n\in c_{k}^{-1}\left(\left\{ 2\right\} \right)
\end{cases}\label{eq:-3}
\end{equation}
for each $n\in L_{N_{k}}$. Following the work described in \cite{MSA},
we say that the sequence $\left\{ T_{k}\right\} _{k=0}^{\infty}$,
defined by $T_{k}:=V_{N_{k}}^{k}$ for each $k\in\mathbb{N}$, is
``generated by the GMSA procedure''.

Note that for each $k\in\mathbb{N}$ and each positive integer $n\in L_{N_{k}}$,
the set $J_{n}^{k}$ stores the indices of all the previous modules
which were used in order to define the module $V_{n}^{k}$, while
for each $n\in L_{N_{k}}$, the set $I_{n}^{k}$ stores the indices
of those input operators which were actually applied in the construction
of $V_{n}^{k}$.
\begin{lem}
[{\cite[Lemma 3.1]{BGR2}}]\label{lem1}Assume that $\left\{ \ell_{k}\right\} _{k=0}^{\infty}$
is a strictly increasing sequence of natural numbers. For each $k\in\mathbb{N}$
and $n\in L_{N_{\ell_{k}}}$, Define $N_{k}^{\prime}:=N_{\ell_{k}}$,
$J_{n}^{\prime k}:=J_{n}^{\ell_{k}}$, $c_{k}^{\prime}:=c_{\ell_{k}},$
$\alpha_{k}^{\prime}:=\alpha_{\ell_{k}}$, $P_{n}^{\prime k}:=P_{n}^{\ell_{k}}$,
$j_{n}^{\prime k}:=j_{n}^{\ell_{k}}$ the unique element of $J_{n}^{\prime k}$
if $n\in c_{k}^{\prime-1}\left(\left\{ 0\right\} \right)$, $\omega_{n}^{\prime k}:=\omega_{n}^{\ell_{k}}$
if $n\in c_{k}^{\prime-1}\left(\left\{ 1\right\} \right)$ and $o_{n}^{\prime k}:=o_{n}^{\ell_{k}}$
if $n\in c_{k}^{\prime-1}\left(\left\{ 2\right\} \right)$. Then for
each $k\in\mathbb{N}$ and each $n\in L_{N_{k}^{\prime}}$, we have
$I_{n}^{\prime k}=I_{n}^{\ell_{k}}$ and $V_{n}^{\prime k}=V_{n}^{\ell_{k}}$,
where the family $\left\{ \left\{ I_{n}^{\prime k}\right\} _{n\in L_{N_{k}^{\prime}}}\right\} _{k=0}^{\infty}$
is defined by \eqref{eq:} with respect to the sets $\left\{ J_{n}^{\prime k}\right\} _{k\in\mathbb{N},\,n\in L_{N_{k}^{\prime}}}$and
the family $\left\{ \left\{ V_{n}^{\prime k}\right\} _{n=-\infty}^{N_{k}}\right\} _{k\in\mathbb{N}}$
is defined by \eqref{eq:-3} with respect to the sequence $\left\{ N_{k}^{\prime}\right\} _{k=0}^{\infty}$
and the above parameters. Consequently, the sequence $\left\{ S_{k}\right\} _{k=0}^{\infty}$
that is generated by the GMSA procedure with respect to the sequence
$\left\{ N_{k}^{\prime}\right\} _{k=0}^{\infty}$ and the above parameters
satisfies $S_{k}=T_{\ell_{k}}$ for each $k\in\mathbb{N}$.
\end{lem}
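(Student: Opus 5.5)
The claim is a bookkeeping identity: the primed parameters are literally the parameters of step $\ell_k$, so every object built from them should coincide with the one built at step $\ell_k$. Fix $k\in\mathbb{N}$ and note that $N_k^{\prime}=N_{\ell_k}$, so the index sets $L_{N_k^{\prime}}$ and $L_{N_{\ell_k}}$ are the same. The first step is to check that the primed data form a legitimate GMSA setting. Since $c_k^{\prime}=c_{\ell_k}$, the preimages $c_k^{\prime-1}(\{i\})$ and $c_{\ell_k}^{-1}(\{i\})$ agree for $i=0,1,2$. The requirements on $J_n^{\prime k}$, $j_n^{\prime k}$, $\omega_n^{\prime k}$, $o_n^{\prime k}$, $P_n^{\prime k}$ and $\alpha_k^{\prime}$ are exactly those already satisfied by the step-$\ell_k$ data. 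Consequently \eqref{eq:} and \eqref{eq:-3} are well defined for the primed data.

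The core of the argument is a strong induction on $n\in L_{N_k^{\prime}}$. Here $n$ ranges over all integers $\le N_k^{\prime}$, and the recursion only ever refers from $n>0$ back to indices in $J_n^{\prime k}\subset L_{n-1}$. So I would treat all $n\le 0$ as the base case and induct on $n=1,\dots,N_k^{\prime}$.

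\textbf{Base case ($n\le0$).} Both definitions give $I_n^{\prime k}=\{-n\}=I_n^{\ell_k}$ and $V_n^{\prime k}=U_{-n}=V_n^{\ell_k}$, since neither depends on $k$.

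\textbf{Inductive step ($n>0$).} Assume $I_j^{\prime k}=I_j^{\ell_k}$ and $V_j^{\prime k}=V_j^{\ell_k}$ for all $j\le n-1$. Because $J_n^{\prime k}=J_n^{\ell_k}\subset L_{n-1}$, we get
\[
I_n^{\prime k}=\cup_{j\in J_n^{\prime k}}I_j^{\prime k}=\cup_{j\in J_n^{\ell_k}}I_j^{\ell_k}=I_n^{\ell_k}.
\]
For the modules, I split according to the value of $c_k^{\prime}(n)=c_{\ell_k}(n)$:
\begin{itemize}
\item In the relaxation case, $V_n^{\prime k}=Id+\alpha_{\ell_k}(n)(V^{\ell_k}_{j_n^{\ell_k}}-Id)=V_n^{\ell_k}$.
\item In the convex combination case, the weights $\omega_n^{\ell_k}$ and the summands coincide, so the sums are equal.
\item In the composition case, $o_n^{\prime k}=o_n^{\ell_k}$ and $P_n^{\prime k}=P_n^{\ell_k}$, so the two products have identical factors in the same order.
\end{itemize}

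Taking $n=N_k^{\prime}$ gives $S_k=V^{\prime k}_{N_k^{\prime}}=V^{\ell_k}_{N_{\ell_k}}=T_{\ell_k}$. There is no real obstacle here. The only point needing care is justifying induction over an index set unbounded below, which is handled by observing that every index $n\le0$ is covered directly by the base case.
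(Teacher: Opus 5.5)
Your proposal is correct. The induction on $n$ is exactly the bookkeeping argument this lemma calls for: all $n\le 0$ are handled at once as the base case, and the inductive step treats relaxation, convex combination and composition using $J_n^{\prime k}=J_n^{\ell_k}\subset L_{n-1}$. The paper itself gives no proof here and simply imports the lemma from \cite[Lemma 3.1]{BGR2}. As you implicitly observed, the strict monotonicity of $\{\ell_k\}_{k=0}^{\infty}$ plays no role in the identity; it matters only for the later use of the lemma when passing to subsequences.
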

\begin{lem}
[{\cite[Lemma 3.2]{BGR2}}]\label{lem2} In the settings of the GMSA
procedure above, for each $k\in\mathbb{N}$ and each non-positive
integer $n$ such that $U_{-n}$ is a $2^{-1}$-strongly quasi-nonexpansive
operator, the intermediate module $V_{n}^{k}$, generated by the GMSA
procedure, is $2^{-1}\varepsilon$-strongly quasi-nonexpansive and
\[
\mathrm{Fix}V_{n}^{k}=\cap_{i\in I_{n}^{k}}\mathrm{Fix}U_{i}.
\]
\end{lem}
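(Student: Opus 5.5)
The plan is a strong induction on $n\in L_{N_k}$ for fixed $k$, following the recursive definition \eqref{eq:-3}, and proving simultaneously that $V_n^k$ is strongly quasi-nonexpansive and that $\mathrm{Fix}V_n^k=\cap_{i\in I_n^k}\mathrm{Fix}U_i$. I read the hypothesis as applying to every input operator $U_i$ with $i\in I_n^k$, together with the standing assumption that $\cap_{i\in I_n^k}\mathrm{Fix}U_i\neq\emptyset$. Some such assumption is needed, since Theorem \ref{2.1.50} and Proposition \ref{Inequalities=000020for=000020compositions=000020and=000020convex=000020combinations} both require a common fixed point. The constant must be tracked carefully; I expect this to be the main obstacle, and it may force a strengthening or a correction of the hypothesis, as explained below.

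\emph{Base case and fixed points.} For $n\le0$ we have $V_n^k=U_{-n}$ and $I_n^k=\{-n\}$, so the fixed point identity is trivial. Since $\varepsilon\le1$, a $2^{-1}$-strongly quasi-nonexpansive operator is also $2^{-1}\varepsilon$-strongly quasi-nonexpansive. For $n>0$ the fixed point identity follows from the induction hypothesis in each of the three cases:
\begin{itemize}
\item Relaxation: $\mathrm{Fix}V_n^k=\mathrm{Fix}V_{j_n^k}^k$ by Remark \ref{Relaxation=000020has=000020the=000020same=000020Fix}, because $\alpha_k(n)>0$.
\item Convex combination and composition: for strongly quasi-nonexpansive operators with a common fixed point, $\mathrm{Fix}$ of a convex combination with positive weights, or of a composition, equals the intersection of the individual $\mathrm{Fix}$ sets. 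The inclusion ``$\supseteq$'' is trivial. For ``$\subseteq$'', take $x\in\mathrm{Fix}V_n^k$ and $z$ a common fixed point; the left-hand sides in Proposition \ref{Inequalities=000020for=000020compositions=000020and=000020convex=000020combinations} vanish, so $\|V_j^kx-x\|=0$ for every $j\in J_n^k$ (using $\omega_n^k(j)\ge\varepsilon>0$), respectively $S_ix=S_{i-1}x$ for every $i$, which gives $V_{o_n^k(i)}^kx=x$ for all $i$.
\end{itemize}
In all cases $I_n^k=\cup_{j\in J_n^k}I_j^k$, which yields the claimed identity.

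\emph{Strong quasi-nonexpansiveness.} For the convex combination, Theorem \ref{2.1.50}(i) preserves the minimal constant. For the relaxation step, which only acts on input operators because $J_n^k\subset L_0$, I will use the direct computation. If $U$ is $\rho$-strongly quasi-nonexpansive and $V=Id+\alpha(U-Id)$, then $2\langle Ux-x,x-z\rangle\le-(1+\rho)\|Ux-x\|^2$, and therefore
\[
\|Vx-z\|^2\le\|x-z\|^2-\frac{1+\rho-\alpha}{\alpha}\|Vx-x\|^2 .
\]
For $\rho=1$, i.e.\ for cutters-type inputs, and $\alpha\le2-\varepsilon$, this gives the constant $\varepsilon/(2-\varepsilon)\ge\varepsilon/2$.

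\emph{Main obstacle.} I expect the main obstacle to be the constants. With $\rho=2^{-1}$ the coefficient $(3/2-\alpha)/\alpha$ in the relaxation estimate becomes negative for $\alpha$ close to $2$. Likewise, Theorem \ref{2.1.50}(ii) only gives the constant $\rho/P_n^k$ for compositions, not a uniform $2^{-1}\varepsilon$. My first attempt will therefore be to check the intended normalization in \cite{BGR2}, most likely that the input operators are cutters, or that the $2^{-1}\varepsilon$ refers to the $\varepsilon$-dependent relaxed-cutter constant. I would then re-run the induction with the invariant ``$V_n^k$ is a $\beta$-relaxed cutter with $\beta\le2-\varepsilon$'' in place of a fixed strong quasi-nonexpansiveness constant. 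If a uniform constant is genuinely unavailable for long compositions, I would prove strong quasi-nonexpansiveness with a constant depending on $k$ and $n$, which suffices for the fixed point identity.
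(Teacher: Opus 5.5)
Your base-case paragraph is already a complete proof, and it is the intended argument; the paper quotes the lemma from \cite{BGR2} without proof, and the same reasoning appears in the opening lines of the proof of Lemma \ref{lem4}. The statement quantifies only over non-positive $n$. For such $n$, $V_n^k=U_{-n}$ by \eqref{eq:-3} and $I_n^k=\{-n\}$ by \eqref{eq:}, so the fixed-point identity is immediate, and $\varepsilon\le 1$ turns the constant $2^{-1}$ into $2^{-1}\varepsilon$.

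The rest of your proposal addresses a claim about every $n\in L_{N_k}$, which the lemma does not make. This includes the strong induction over positive $n$, the added assumption $\cap_{i\in I_n^k}\mathrm{Fix}U_i\neq\emptyset$, and your ``main obstacle'', none of which you need.

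Your suspicion about positive $n$ is nevertheless correct. In $\mathbb{R}$, take $U=Id+\frac43(P_{\{0\}}-Id)$, so $Ux=-x/3$. It is $2^{-1}$-strongly quasi-nonexpansive but not a cutter. Its relaxation with $\alpha>3/2$ is $x\mapsto(1-\frac{4\alpha}{3})x$, which is not even quasi-nonexpansive. This is exactly why the paper handles positive $n$ separately in Lemma \ref{lem3}, using:
\begin{itemize}
\item the nonempty-intersection hypothesis;
\item the condition that the relaxed input is a cutter or $\alpha_k(n)=1$;
\item the weaker constant $\frac{\varepsilon}{2\Pi_{i=1}^{n}P_{i}^{k}}$.
\end{itemize}
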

\begin{lem}
[{\cite[Lemma 3.4]{BGR2}}]\label{lem3}Let $\left\{ U_{n}\right\} _{n=0}^{\infty}$
be a sequence of $2^{-1}$-strongly quasi-nonexpansive operators such
that $\cap_{i\in I_{n}^{k}}\mathrm{Fix}U_{i}\not=\emptyset$ for each
$n\in\mathbb{N}$, let $k\in\mathbb{N}$ and let $n\in L_{N_{k}}$.
Then 
\[
\mathrm{Fix}V_{n}^{k}=\cap_{i\in I_{n}^{k}}\mathrm{Fix}U_{i}\not=\emptyset.
\]
Moreover, if $n$ is positive, then we have
\[
\cap_{j\in J_{n}^{k}}\mathrm{Fix}V_{j}^{k}=\cap_{i\in I_{n}^{k}}\mathrm{Fix}U_{i}\not=\emptyset
\]
and the following assertions hold:
\begin{enumerate}
\item If $n\in c_{k}^{-1}\left(\left\{ 0\right\} \right)$ and, in addition,
$U_{-j_{n}^{k}}$ is a cutter, or if $\alpha_{k}\left(n\right)=1$,
then the intermediate module $V_{n}^{k},$ generated by the GMSA procedure,
is an $\frac{\varepsilon}{2\Pi_{i=1}^{n}P_{i}^{k}}$-strongly quasi-nonexpansive
operator.
\item If $n\in c_{k}^{-1}\left(\left\{ 1\right\} \right)\cup c_{k}^{-1}\left(\left\{ 2\right\} \right)$,
then the intermediate module $V_{n}^{k}$, generated by the GMSA procedure,
is an $\frac{\varepsilon}{2\Pi_{i=1}^{n}P_{i}^{k}}$-strongly quasi-nonexpansive
operator.
\end{enumerate}
We use the convention that for a non-positive integer $n$, the empty
product $\Pi_{i=1}^{n}P_{i}^{k}=1$.
\end{lem}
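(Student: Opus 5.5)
The plan is strong induction on $n\in L_{N_{k}}$ with $k$ fixed. The induction carries the identity $\mathrm{Fix}V_{n}^{k}=\cap_{i\in I_{n}^{k}}\mathrm{Fix}U_{i}$. The nonemptiness is read off from the hypothesis (interpreting it as $\cap_{i\in I_{n}^{k}}\mathrm{Fix}U_{i}\neq\emptyset$ for all relevant $k$ and $n$). For the strong quasi-nonexpansiveness constants I will not induct with the sharper constants $\frac{\varepsilon}{2\Pi_{i=1}^{j}P_{i}^{k}}$. Instead I use the uniform constant from Lemma \ref{lem2}: every intermediate module $V_{j}^{k}$ is $2^{-1}\varepsilon$-strongly quasi-nonexpansive.

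\emph{Base case.} If $n\le0$, then $V_{n}^{k}=U_{-n}$ and $I_{n}^{k}=\{-n\}$, so the identity is immediate.

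\emph{Fixed-point identity for $n>0$.} Since $J_{n}^{k}\subset L_{n-1}$, the induction hypothesis gives $\mathrm{Fix}V_{j}^{k}=\cap_{i\in I_{j}^{k}}\mathrm{Fix}U_{i}$ for every $j\in J_{n}^{k}$. Taking the intersection over $j$ and using \eqref{eq:} yields
\[
\cap_{j\in J_{n}^{k}}\mathrm{Fix}V_{j}^{k}=\cap_{i\in I_{n}^{k}}\mathrm{Fix}U_{i},
\]
which is nonempty. It remains to show $\mathrm{Fix}V_{n}^{k}=\cap_{j\in J_{n}^{k}}\mathrm{Fix}V_{j}^{k}$, and I split by $c_{k}(n)$.
\begin{itemize}
\item In the relaxation case this follows from Remark \ref{Relaxation=000020has=000020the=000020same=000020Fix}, since $\alpha_{k}(n)>0$.
\item In the convex-combination and composition cases, the inclusion $\supseteq$ is trivial.
\item For $\subseteq$, take $x\in\mathrm{Fix}V_{n}^{k}$ and pick $z$ in the nonempty common fixed point set. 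Apply Proposition \ref{Inequalities=000020for=000020compositions=000020and=000020convex=000020combinations} to the operators $V_{j}^{k}$, which are $2^{-1}\varepsilon$-strongly quasi-nonexpansive, with any $R>\|x-z\|$ (and $R>0$). The right-hand side $\|V_{n}^{k}x-x\|$ vanishes.
\item For the convex combination, this gives $V_{j}^{k}x=x$ for all $j$, because the weights are positive.
\item For the composition, with the list $V_{o_{n}^{k}(1)}^{k},\dots,V_{o_{n}^{k}(P_{n}^{k})}^{k}$ (repetitions allowed), it gives $S_{i}x=S_{i-1}x$ for every $i$. Hence each $V_{o_{n}^{k}(i)}^{k}$ fixes $x$, and surjectivity of $o_{n}^{k}$ gives $x\in\cap_{j}\mathrm{Fix}V_{j}^{k}$.
\end{itemize}

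\emph{Assertion (i).} Here $j_{n}^{k}\in L_{0}$, so $V_{j_{n}^{k}}^{k}=U_{-j_{n}^{k}}$. If $\alpha_{k}(n)=1$, then $V_{n}^{k}=U_{-j_{n}^{k}}$ is $2^{-1}$-strongly quasi-nonexpansive. If instead $U_{-j_{n}^{k}}$ is a cutter, Theorem \ref{thm:2.1.39} shows that $V_{n}^{k}$ is $\frac{2-\alpha_{k}(n)}{\alpha_{k}(n)}$-strongly quasi-nonexpansive. Because $\alpha_{k}(n)\in[\varepsilon,2-\varepsilon]$, we get
\[
\frac{2-\alpha_{k}(n)}{\alpha_{k}(n)}\ge\frac{\varepsilon}{2-\varepsilon}\ge\frac{\varepsilon}{2}.
\]
In both cases the constant dominates $\frac{\varepsilon}{2\Pi_{i=1}^{n}P_{i}^{k}}$, since every $P_{i}^{k}\ge1$, and a larger constant implies the smaller one.

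\emph{Assertion (ii).} Apply Theorem \ref{2.1.50} to the modules $V_{j}^{k}$, all $2^{-1}\varepsilon$-strongly quasi-nonexpansive with a common fixed point. The convex combination is $2^{-1}\varepsilon$-strongly quasi-nonexpansive. The composition of $P_{n}^{k}$ factors is $\frac{\varepsilon}{2P_{n}^{k}}$-strongly quasi-nonexpansive. Both constants are at least $\frac{\varepsilon}{2\Pi_{i=1}^{n}P_{i}^{k}}$.

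The only delicate point I expect is the inclusion $\mathrm{Fix}V_{n}^{k}\subseteq\cap_{j}\mathrm{Fix}V_{j}^{k}$ for compositions with repeated factors. Proposition \ref{Inequalities=000020for=000020compositions=000020and=000020convex=000020combinations} handles it once it is applied to the full ordered list of $P_{n}^{k}$ factors. That application needs a common fixed point of the factors, which is exactly what the induction supplies.
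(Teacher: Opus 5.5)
Your skeleton is the natural one and matches how the lemma must be proved: strong induction on $n$, the fixed-point identity via Proposition~\ref{Inequalities=000020for=000020compositions=000020and=000020convex=000020combinations}, Theorem~\ref{thm:2.1.39} at relaxation nodes, and Theorem~\ref{2.1.50} at combination and composition nodes. Your base case and assertion (i) are correct. The gap is the shortcut you take for the constants.

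Lemma~\ref{lem2} is a statement about non-positive $n$ only, i.e.\ about $V_n^k=U_{-n}$. It says nothing about modules with positive index, and the uniform claim you draw from it is false. Take $\varepsilon=1$, let the inputs be the metric projections onto the lines $L_1,L_2,L_3\subset\mathbb{R}^2$ through $0$ at angles $\pi/6,\pi/3,\pi/2$, and form the composition node $V_1^k=P_{L_3}P_{L_2}P_{L_1}$. For $x=(1,0)$ you get $\|V_1^kx\|^2=\frac{27}{64}$ and $\|V_1^kx-x\|^2=\frac{91}{64}$. With $z=0$, the only common fixed point, the best possible constant is therefore at most $\frac{37}{91}<\frac12=\frac{\varepsilon}{2}$, and with more, more slowly rotating lines it tends to $0$. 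So both places where you use the uniform constant are unsupported:
\begin{itemize}
\item the application of Proposition~\ref{Inequalities=000020for=000020compositions=000020and=000020convex=000020combinations} in the fixed-point step needs a genuine positive constant for each factor $V_j^k$;
\item your conclusion in (ii) that a composition node is $\frac{\varepsilon}{2P_n^k}$-strongly quasi-nonexpansive is false when its factors are themselves composite modules.
\end{itemize}
The decay of constants through nested compositions is exactly what the product $\prod_{i=1}^{n}P_i^k$ in the statement records.

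The repair is to do what you chose to avoid and carry the constants in the induction. Take as hypothesis: for every $j<n$ in $L_{N_k}$, $\mathrm{Fix}V_j^k=\cap_{i\in I_j^k}\mathrm{Fix}U_i$ and $V_j^k$ is $\frac{\varepsilon}{2\prod_{i=1}^{j}P_i^k}$-strongly quasi-nonexpansive. Since $P_i^k\ge1$ and $J_n^k\subset L_{n-1}$, every factor of $V_n^k$ is then $\frac{\varepsilon}{2\prod_{i=1}^{n-1}P_i^k}$-strongly quasi-nonexpansive. Your fixed-point argument then goes through verbatim. Theorem~\ref{2.1.50} gives the constant $\frac{\varepsilon}{2\prod_{i=1}^{n-1}P_i^k}$ for a convex combination, and $\frac{\varepsilon}{2P_n^k\prod_{i=1}^{n-1}P_i^k}=\frac{\varepsilon}{2\prod_{i=1}^{n}P_i^k}$ for a composition.

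Carrying the constants honestly also exposes a hypothesis the statement leaves implicit. Every relaxation node $j<n$ below $V_n^k$ must satisfy the cutter-or-$\alpha_k(j)=1$ condition of (i); this is assumed explicitly in Lemma~\ref{lem5} and Theorem~\ref{main_res}, where the lemma is used. Without it the statement fails:
\begin{itemize}
\item $U_0=Id+\frac43(P_L-Id)$ is $\frac12$-strongly quasi-nonexpansive but not a cutter;
\item for $\alpha_k(1)\in(\frac32,2-\varepsilon]$ (possible when $\varepsilon<\frac12$), the node $V_1^k=Id+\frac{4\alpha_k(1)}{3}(P_L-Id)$ is not even quasi-nonexpansive;
\item so a one-factor composition node $V_2^k=V_1^k$ (with $J_2^k=\{1\}$, $P_2^k=1$) violates (ii).
\end{itemize}
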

Under the assumptions made in Subsection \ref{GMSA}, we consider
the following condition on the sequence $\left\{ I_{N_{K}}^{k}\right\} _{k=0}^{\infty}$,
which is known in the literature as an ``admissible control'' (see,
for example, \cite{PLC}):

For each $n\in\mathbb{N}$, there is an integer $M_{n}>0$ such that
\begin{equation}
n\in\bigcup_{k=i}^{i+M_{n}-1}I_{N_{k}}^{k}\,\,\mathrm{for\,all}\,i\in\mathbb{N}.\label{eq:-16}
\end{equation}

In the case where for each $n\in\mathbb{N}$, $M_{n}$ in\textbf{
}the condition of\textbf{ $\eqref{eq:-16}$} equals some $s>0$, this
condition is known in the literature as an ``s-intermittent control''
(see, for instance, \cite[Definition 5.8.10]{C_book}, \cite{BRZ(R)}
and \cite{MSA}).

The next example (see Example 4.3 in \cite{BGR2}) demonstrates a
particular structure of the GMSA procedure, where \eqref{eq:-16}
is satisfied. It should be noted that the GMSA studied here is a general
modular algorithmic scheme which can cover algorithmic structures
investigated earlier. In this respect, one can find examples of similar
nature elsewhere, for instance, in \cite{Combettes1997}.
\begin{example}
\label{Concrete=000020example} For each $i\in\mathbb{N}$, let $\mathrm{max}_{i}$
be a maximal natural number such that $i+1=2^{\mathrm{max}_{i}}p$,
where $p\in\mathbb{N}$ is odd. Define a sequence $\left\{ f_{i}\right\} _{i=0}^{\infty}$
of natural numbers by $f_{i}:=\mathrm{max}_{i}$. For each $n\in\mathbb{N}$,
Define $M_{n}:=2^{n+1}$. Then for each $n,i\in\mathbb{N}$, we have
$n\in f\left(\left\{ i,i+1,\dots,i+M_{n}-1\right\} \right)$ because
$n=f_{2^{n}\left(2m+1\right)-1}$ for each $m\in\mathbb{N}$. The
first $20$ elements of $\left\{ f_{i}\right\} _{i=0}^{\infty}$ are:
\[
0,1,0,2,0,1,0,3,0,1,0,2,0,1,0,4,0,1,0,2.
\]
Now for each $k\in\mathbb{N}$, let $c_{k}:\left\{ 1\right\} \rightarrow\left\{ 0,1,2\right\} $
be a function. Define a sequence $\left\{ N_{k}\right\} _{k=0}^{\infty}$
by $N_{k}:=1$ for each $k\in\mathbb{N}$. Clearly, $L_{N_{k}}\cap\mathbb{N}\backslash\left\{ 0\right\} =\left\{ 1\right\} $.
Pick a bounded sequence $\left\{ P_{1}^{k}\right\} _{k=0}^{\infty}$
of positive integers and a sequence $\left\{ J_{1}^{k}\right\} _{k=0}^{\infty}$
of subsets of $L_{0}$ such that $-f_{k}\in J_{1}^{k}$, $\left|J_{1}^{k}\right|\le P_{1}^{k}$
and $\left|J_{1}^{k}\right|=\begin{cases}
1, & \mathrm{if}\,c_{k}\left(1\right)=0,\\
P_{1}^{k}, & \mathrm{if}\,c_{k}\left(1\right)=1,
\end{cases}$ for each $k\in\mathbb{N}$. Define the rest of parameters of GMSA
procedure presented in Section \ref{GMSA}. Clearly, then $I_{N_{k}}^{k}=-\text{\ensuremath{J_{1}^{k}}}$,
where $-J_{1}^{k}=\left\{ -j|\,j\in J_{1}^{k}\right\} $, and hence
for each $k\in\mathbb{N}$ and each $n\in\mathbb{N}$, Equation \eqref{eq:-16}
is satisfied.
\end{example}
In the next example we recall the MSA procedure presented in \cite{MSA},
where the finite sequence of input operators $\left\{ U_{i}^{\prime}\right\} _{i=0}^{\mathcal{M}}$
was considered for some positive integer $\mathcal{M}$ and $U_{0}^{\prime}$
was set to be the identity $Id$. We show that it is a particular
case of the GMSA procedure above. See also Example 4.4 in \cite{BGR2}
in this connection.
\begin{example}
[MSA methods with an admissible control] \label{MSA} For a finite
sequence of operators $\left\{ U_{i}^{\prime}\right\} _{i=0}^{\mathcal{M}}$
(where $\mathcal{M}\ge0$ is an integer and $U_{i}:\mathcal{H\rightarrow\mathcal{H}}$
for each $i\in\left\{ 1,2,\dots,\mathcal{M}\right\} $) and a sequence
$\left\{ N_{k}^{\prime}\right\} _{k=0}^{\infty}$ of positive integers,
the parameters of the MSA procedure are defined similarly, where the
sets $\left\{ \left\{ J_{n}^{\prime k}\right\} _{n\in\mathbb{N}\backslash\left\{ 0\right\} \cap L_{N_{k}^{\prime}}}\right\} _{k\in\mathbb{N}}$
are the subsets of $\left\{ -\mathcal{M},-\mathcal{M}+1,\dots,n-1\right\} $
and the intermediate modules $\left\{ \left\{ V_{n}^{\prime k}\right\} _{n=-\mathcal{M}}^{N_{k}}\right\} _{k\in\mathbb{N}}$
are considered (instead of $\left\{ \left\{ V_{n}^{\prime k}\right\} _{n=-\infty}^{N_{k}}\right\} _{k\in\mathbb{N}}$)
and, consequently, the finite sequence of sets of indices $\left\{ \left\{ I_{n}^{\prime N_{k}^{\prime}}\right\} _{n=-\mathcal{M}}^{N_{k}^{\prime}}\right\} _{k\in\mathbb{N}}$
are of interest (instead of $\left\{ \left\{ I_{n}^{\prime N_{k}^{\prime}}\right\} _{n=-\infty}^{N_{k}^{\prime}}\right\} _{k\in\mathbb{N}}$).
Let $\left\{ f_{n}\right\} _{n=0}^{\infty}$ be a sequence defined
in Example \ref{Concrete=000020example}. Define a sequence $\left\{ U_{n}\right\} _{n=0}^{\infty}$
by
\[
U_{n}:=\begin{cases}
U_{n}^{\prime}, & \mathrm{if}\,n\in\left\{ 0,1,\dots,\mathcal{M}\right\} ,\\
Id, & \mathrm{if}\,n\not\in\left\{ 0,1,\dots,\mathcal{M}\right\} ,
\end{cases}
\]
for each $n\in\mathbb{N}$, and define a sequence $\left\{ N_{k}\right\} _{k=0}^{\infty}$
by
\[
N_{k}:=\begin{cases}
N_{k}^{\prime}, & \mathrm{if}\,k\in f^{-1}\left(\left\{ 0,1,\dots,\mathcal{M}\right\} \right),\\
N_{k}^{\prime}+1, & \mathrm{if}\,k\not\in f^{-1}\left(\left\{ 0,1,\dots,\mathcal{M}\right\} \right),
\end{cases}
\]
for each $k\in\mathbb{N}$. Define

\[
J_{n}^{k}:=\begin{cases}
J_{n}^{\prime k}, & \mathrm{if}\,n\in L_{N_{k}^{\prime}},\\
\left\{ N_{k}^{\prime},-f_{k}\right\} , & \mathrm{if}\,n=N_{k}^{\prime}+1,
\end{cases}
\]
and 
\[
P_{n}^{k}:=\begin{cases}
P_{n}^{\prime k}, & \mathrm{if}\,n\in L_{N_{k}^{\prime}},\\
2, & \mathrm{if}\,n=N_{k}^{\prime}+1,
\end{cases}
\]
for each $k\in\mathbb{N}$ and each positive integer $n\in L_{N_{k}}$.
For each $k\in\mathbb{N}$, define a function $c_{k}:\mathbb{N}\backslash\left\{ 0\right\} \cap L_{N_{k}}\rightarrow\left\{ 0,1,2\right\} $
by 
\[
c_{k}\left(n\right):=\begin{cases}
c_{k}^{\prime}\left(n\right), & \mathrm{if}\,n\in L_{N_{k}^{\prime}},\\
2, & \mathrm{if}\,n=N_{k}^{\prime}+1,
\end{cases}
\]
for each positive integer $n\in L_{N_{k}}$, and for each $k\not\in f^{-1}\left(\left\{ 0,1,\dots,\mathcal{M}\right\} \right)$,
define a function $o_{N_{k}}^{k}:\left\{ 1,2\right\} \rightarrow J_{N_{k}}^{k}$
by 
\[
o_{N_{k}}^{k}\left(m\right):=\begin{cases}
N_{k}^{\prime}, & \mathrm{if}\,m=1,\\
-f_{k}, & \mathrm{if}\,m=2,
\end{cases}
\]
for each $m\in\left\{ 1,2\right\} $. Now set the rest of parameters
of the GMSA procedure with respect to the operators $\left\{ U_{n}\right\} _{n=0}^{\infty}$
to be the same as the parameters of the MSA procedure above. By \eqref{eq:}.
\begin{equation}
I_{N_{k}}^{k}=\begin{cases}
I_{N_{k}^{\prime}}^{\prime k}, & \mathrm{if}\,k\in f^{-1}\left(\left\{ 0,1,\dots,\mathcal{M}\right\} \right),\\
I_{N_{k}^{\prime}}^{\prime k}\cup\left\{ f_{k}\right\} , & \mathrm{if}\,k\not\in f^{-1}\left(\left\{ 0,1,\dots,\mathcal{M}\right\} \right).
\end{cases}\label{eq:-9}
\end{equation}
By \eqref{eq:-3}, the sequence $\left\{ T_{k}^{\prime}\right\} _{k=0}^{\infty}$
of the operators generated by the MSA procedure above satisfies $T_{k}^{\prime}=V_{N_{k}^{\prime}}^{\prime k}=V_{N_{k}}^{k}=T_{k}$
for each $k\in\mathbb{N}$. If, in addition, the sequence $\left\{ I_{N_{k}}^{\prime k}\right\} _{k=0}^{\infty}$
satisfies Equation\textbf{ }\eqref{eq:-16} with respect to the set
$\left\{ 0,1,\dots,\mathcal{M}\right\} $, that is, for each $n\in\left\{ 0,1,\dots,\mathcal{M}\right\} $,
Equation \eqref{eq:-16} holds, then it is easily verified by \eqref{eq:-9}
that the sequence $\left\{ I_{N_{k}}^{k}\right\} _{k=0}^{\infty}$
satisfies Equation \eqref{eq:-16} with respect to $\mathbb{N}$.
\end{example}
\begin{rem}
\label{fix_cl_cnv}Note that if for each $n\in\mathbb{N}$, the operator
$U_{n}$ has a fixed point and is $\rho$-strongly nonexpansive for
some $\rho\ge0$, then, by Theorem \ref{thm:2.1.39}, Remark \ref{Relaxation=000020has=000020the=000020same=000020Fix}
and Remark \ref{cutter_fix}, $\mathrm{Fix}U_{n}$ is closed and convex
for each $n\in\mathbb{N}$ and, consequently, $\cap_{n\in\mathbb{N}}\mathrm{Fix}U_{n}$
is closed and convex.
\end{rem}
Throughout the rest of the paper we consider the settings of the GMSA
procedure presented in this section.

\section{\protect\label{sec:The-strong-convergence}The strong convergence
properties of the GMSA procedure based methods}

In this section we investigate the strong convergence and the bounded
perturbation resilience of Algorithm \ref{alg=0000201} with respect
to the operators $\left\{ T_{k}\right\} _{k=0}^{\infty}$ generated
by the GMSA procedure described in Section \ref{GMSA}, and deduce
important corollaries for the case where the input operators are certain
relaxations of firmly nonexpansive ones.

\subsection{\protect\label{str_quas_non}General results concerning strongly
quasi-nonexpansive input operators}

We begin by proving the following key auxiliary lemma.
\begin{lem}
\label{lem5}Let $\varepsilon\in\left(0,1\right]$, let $\left\{ U_{n}\right\} _{n=0}^{\infty}$
be a sequence of $2^{-1}$-strongly quasi-nonexpansive and approximately
shrinking operators and suppose that the sequence of positive integers
$\left\{ N_{k}\right\} _{k=0}^{\infty}$ is bounded. Let $\left\{ T_{k}\right\} _{k=0}^{\infty}$
be a sequence of operators generated by the GMSA procedure as in \eqref{eq:-3}.
Assume also that for each $k\in\mathbb{N}$ and each $n\in c_{k}^{-1}\left(\left\{ 0\right\} \right)$,
the operator $U_{j_{n}^{k}}$ is a cutter, or that $\alpha_{k}\left(n\right)=1$.
Further assume that there exists $r>0$ such that for each $k\in\mathbb{N}$
and each $n\in c_{k}^{-1}\left(\left\{ 1\right\} \right)\cup c_{k}^{-1}\left(\left\{ 2\right\} \right)$,
we have $\cap_{i\in I_{n}^{k}}\mathrm{Fix}U_{i}\cap B\left(0,r\right)\not=\emptyset$,
that is, the finitely many members of the sequence $\left\{ U_{n}\right\} _{n=0}^{\infty}$
with a common fixed point in the ball $B\left(0,r\right)$ are used
in the construction of the intermediate modules $V_{n}^{k}$. Then
for any bounded sequence $\left\{ x^{k}\right\} _{k=0}^{\infty}\subset\mathcal{H}$,
the following implication holds:
\[
T_{k}\left(x^{k}\right)-x^{k}\rightarrow0\,\,\Longrightarrow\lim_{k\rightarrow\infty}\max_{i\in I_{N_{k}}^{k}}d\left(x^{k},\mathrm{Fix}U_{i}\right)=0.
\]
\end{lem}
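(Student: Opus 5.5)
We argue by downward propagation through the recursive tree of intermediate modules. Since $\{N_k\}$ is bounded, say $N_k\le N$ for all $k$, and since (as we will check) the numbers $P_n^k$ that matter are controlled, each $T_k$ has depth at most $N$. We prove, by induction on $n$ descending from $N_k$ to non-positive indices, the following claim. For every module index $n$ reachable from $N_k$ (that is, $n=N_k$ or $n\in J_m^k$ for some reachable $m>0$), choose a sequence of points $y_n^k$ built from $x^k$ along the tree. Then $\|V_n^k(y_n^k)-y_n^k\|\to0$ and $\|y_n^k-x^k\|\to0$ uniformly over reachable $n$. At the leaves $n\le0$ we get $\|U_{-n}(y_n^k)-y_n^k\|\to0$. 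Since the $U_i$ are approximately shrinking and $\{y_n^k\}$ is bounded, this gives $d(y_n^k,\mathrm{Fix}U_{-n})\to0$, and hence $d(x^k,\mathrm{Fix}U_{-n})\to0$. Every $i\in I_{N_k}^k$ arises as such a leaf, by the recursion \eqref{eq:}. To obtain the $\max$, we argue by contradiction: suppose there are $\delta>0$, a subsequence, and $i_k\in I_{N_k}^k$ with $d(x^k,\mathrm{Fix}U_{i_k})\ge\delta$. After passing to a subsequence via Lemma \ref{lem1}, we may fix the tree shape (the finite data $N_k$, $c_k$, and the combinatorial pattern of $J_n^k$, $o_n^k$ relative to leaves), so the leaf containing $i_k$ sits at a fixed position in the tree.

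The inductive step uses Proposition \ref{Inequalities=000020for=000020compositions=000020and=000020convex=000020combinations} together with the strong quasi-nonexpansiveness from Lemma \ref{lem3}. Fix a common fixed point $z_n^k\in\cap_{i\in I_n^k}\mathrm{Fix}U_i\cap B(0,r)$; by Lemma \ref{lem3} this set equals $\cap_{j\in J_n^k}\mathrm{Fix}V_j^k$. Then $R:=\sup_k\|x^k\|+r+1$, enlarged to account for the shifts $y-x$, bounds $\|y_n^k-z_n^k\|$ uniformly. We treat the three types of node as follows.
\begin{itemize}
\item \emph{Convex combination.} The first inequality of the proposition, applied with the $\frac{\varepsilon}{2\Pi P}$-strong quasi-nonexpansiveness constants of the children and with weights at least $\varepsilon$, gives $\|V_j^k(y_n^k)-y_n^k\|\to0$ for every $j\in J_n^k$. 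We take $y_j^k:=y_n^k$.
\item \emph{Composition.} The second inequality gives $\|S_i y_n^k-S_{i-1}y_n^k\|\to0$ for each factor. We take $y_{o(i)}^k:=S_{i-1}y_n^k$. Summing over at most $P_n^k$ factors gives $\|y_{o(i)}^k-y_n^k\|\to0$.
\item \emph{Relaxation.} We have $V_j^k(y)-y=\alpha_k(n)^{-1}(V_n^k(y)-y)$ with $\alpha_k(n)\ge\varepsilon$, so this case is immediate.
\end{itemize}
The coefficients $\frac{\varepsilon}{2\Pi P_i^k}$ and $\frac1{2R}$ must stay bounded away from $0$. For this we need $\sup_k P_n^k<\infty$. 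The statement does not provide this directly, so we would handle it by the subsequence/contradiction device above. A subtlety is that $I_n^k$ finite is needed for $z_n^k$ to exist; this is given by the ball hypothesis. Relaxation nodes require no fixed point.

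The main obstacle is uniformity: the depth is bounded, but the branching $P_n^k$ and the lengths of compositions may grow with $k$. In that case the constants in Proposition \ref{Inequalities=000020for=000020compositions=000020and=000020convex=000020combinations} degenerate, and summing $P_n^k$ small displacements along a composition need not stay small. We would first check whether boundedness of the $P_n^k$ is implicitly forced, since Lemma \ref{lem3} already carries $\Pi P_i^k$ and the statement is presumably meant with it bounded. Failing that, we would avoid summing along the composition. Instead we would use the squared form: $\sum_i\rho_i\|S_iy-S_{i-1}y\|^2\le 2R\|Vy-y\|$, combined with Fejér-type distance monotonicity $\|S_iy-z\|\le\|y-z\|$. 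This would bound $\|S_{i-1}y-y\|$ through $\|y-z\|^2-\|S_{i-1}y-z\|^2\le\|y-z\|^2-\|Vy-z\|^2\le 2R\|Vy-y\|$. We would then use strong quasi-nonexpansiveness of the partial composition $S_{i-1}$ to get $\|S_{i-1}y-y\|^2\lesssim \Pi P\cdot R\|Vy-y\|$. This still requires a bound on $\Pi P$, and we would ensure that bound by the subsequence argument.
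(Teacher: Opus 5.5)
Your skeleton is the same as the paper's: induction over the module tree, the constants $\frac{\varepsilon}{2\Pi_{i=1}^{n}P_i^k}$ from Lemma~\ref{lem3}, a uniform $R$ from the ball hypothesis, the two inequalities of Proposition~\ref{Inequalities=000020for=000020compositions=000020and=000020convex=000020combinations}, and, at composition nodes, moving the base point to $S_{q_0-1}(x^k)$ and returning by the triangle inequality. The step you leave open, however, cannot be closed, because the statement as written is false, and the paper's proof passes over the same point silently. You are right that everything hinges on $\sup_{k,n}P_n^k<\infty$. A subsequence/contradiction argument cannot produce this: if $P_n^k\to\infty$, every subsequence is unbounded, and ``fixing the tree shape'' presupposes finitely many shapes. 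Your squared-form variant still degrades with $\Pi P$. Here is a counterexample. In $\mathbb{R}^2$ take $N_k=1$ and $c_k(1)=2$, and let $T_k$ be the composition, in the order $q=1,\dots,P_k$, of the projections onto the lines through $0$ at angles $2\pi q/P_k$, where $P_k=4(k+2)$. Let $x^k=(1,0)$. Then $T_k(x^k)=\cos^{P_k}(2\pi/P_k)\,x^k$, so $T_k(x^k)-x^k\to0$. Yet the vertical line ($q=P_k/4$) is some $\mathrm{Fix}\,U_i$ with $i\in I_{N_k}^k$, at distance $1$ from $x^k$. All hypotheses hold: projections onto lines are firmly nonexpansive and approximately shrinking, and $0$ is a common fixed point in $B(0,r)$. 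The paper uses the same boundedness in two places. First, it concludes from $\|V_n^k(x^k)-x^k\|\ge c_k\sum_{j\in J_n^k}\|V_j^k(x^k)-x^k\|^2\to0$, with $c_k$ of order $(R\,\Pi_iP_i^k)^{-1}$, that each summand tends to $0$. Second, it sums $q_0-1$ displacements. So boundedness of $\{P_n^k\}$ must be added as a hypothesis; it is assumed in the later corollaries.

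There is a second gap at the leaves. Your step ``since the $U_i$ are approximately shrinking \dots\ $d(y_n^k,\mathrm{Fix}\,U_{-n})\to0$'' applies approximate shrinking to $U_{i_k}$, whose index depends on $k$. The property is assumed for each operator separately, not uniformly over the family, and fixing the leaf's position in the tree does not fix the operator. For a counterexample, take $N_k=1$, $c_k(1)=0$, $\alpha_k(1)=1$, $j_1^k=-k$, and $U_k:=\mathrm{Id}+\frac1{k+1}(P_C-\mathrm{Id})$ for a nonempty closed convex $C$. Let $x^k:=x$ with $d(x,C)=1$. Each $U_k$ is $\frac{2-\lambda}{\lambda}$-strongly quasi-nonexpansive with $\lambda=\frac1{k+1}$ (Theorem~\ref{thm:2.1.39}) and is approximately shrinking. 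Then $T_k=U_k$ and $\|T_k(x)-x\|=\frac1{k+1}\to0$, but $d(x,\mathrm{Fix}\,U_k)=1$. This is exactly the paper's Case~1, and the same issue affects the leaf children in its Cases~2 and~3. A correct version needs either a uniform approximate-shrinking assumption on $\{U_n\}$, or the weaker conclusion that, for each fixed $i$, $d(x^k,\mathrm{Fix}\,U_i)\to0$ along the $k$ with $i\in I_{N_k}^k$. With $N_k$ and $P_n^k$ bounded, your path-tracking argument does prove that weaker statement. Follow the root-to-leaf path to the fixed $i$, which has length at most $N+1$; each composition node shifts the point by at most $P$ steps whose squares sum to $o(1)$. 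That weaker statement is all Theorem~\ref{main_res}(ii) uses.
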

\begin{proof}
Since the sequence $\left\{ N_{k}\right\} _{k=0}^{\infty}$ is a bounded
sequence of positive integers, it attains only a finite number of
values. As a result, passing to subsequence if necessary and using
Lemma \ref{lem1}, we may assume without any loss of generality that
$N_{k}=N$ for all $k\in\mathbb{N}$, where $N$ is some positive
integer. Clearly, it is enough to show the following assertion:

For each positive integer $n\in L_{N}$, we have the implication (which
holds, in particular, for $n=N$)
\begin{equation}
V_{n}^{k}\left(x^{k}\right)-x^{k}\rightarrow0\,\,\Longrightarrow\,\,\lim_{k\rightarrow\infty}\max_{i\in I_{n}^{k}}d\left(x^{k},\mathrm{Fix}U_{i}\right)=0\label{eq:-6}
\end{equation}
for each bounded sequence $\left\{ x^{k}\right\} _{k=0}^{\infty}\subset\mathcal{H}$.
Let $n$ be a positive integer in $L_{N}$. We prove that $n$ satisfies
implication \eqref{eq:-6} by induction on the set $\mathbb{N}\backslash\left\{ 0\right\} \cap L_{N}$.
Assume that for each positive integer $j<n$ in $L_{N}$, this implication
holds for each bounded sequence $\left\{ x^{k}\right\} _{k=0}^{\infty}\subset\mathcal{H}$.
Let $\left\{ x^{k}\right\} _{k=0}^{\infty}\subset\mathcal{H}$ be
bounded. Suppose that 
\begin{equation}
V_{n}^{k}\left(x^{k}\right)-x^{k}\rightarrow0.\label{eq:-11}
\end{equation}
By the definition of the GMSA procedure and since for each $k\in\mathbb{N}$,
the image of the function $c_{k}$ is contained in the finite set
$\left\{ 0,1,2\right\} $, we consider, passing to subsequence if
necessary and by Lemma \ref{lem1}, the following three cases:

\textbf{\textit{Case 1:}} $c_{k}\left(n\right)=0$ for each $k\in\mathbb{N}$.
In this case (by \eqref{eq:-3}) we have for each $k\in\mathbb{N}$,
\begin{equation}
V_{n}^{k}=Id+\alpha_{k}\left(n\right)\left(V_{j_{n}^{k}}^{k}-Id\right)=Id+\alpha_{k}\left(n\right)\left(U_{-j_{n}^{k}}-Id\right),\label{eq:-7}
\end{equation}
where $\alpha_{k}\left(n\right)\in\left[\varepsilon,2-\varepsilon\right]$
and $j_{n}^{k}\in J_{n}^{k}\subset L_{0}$. Clearly, $J_{n}^{k}=\left\{ j_{n}^{k}\right\} $.
By \eqref{eq:}, 
\begin{equation}
I_{n}^{k}=\cup_{j\in J_{n}^{k}}I_{j}^{k}=I_{j_{n}^{k}}^{k}=\left\{ -j_{n}^{k}\right\} \label{eq:-8}
\end{equation}
for each $k\in\mathbb{N}$. By \eqref{eq:-7}, for each $k\in\mathbb{N}$,
we have
\[
\left\Vert V_{n}^{k}\left(x^{k}\right)-x^{k}\right\Vert =\alpha_{k}\left(n\right)\left\Vert U_{-j_{n}^{k}}\left(x^{k}\right)-x^{k}\right\Vert \ge\varepsilon\left\Vert U_{-j_{n}^{k}}\left(x^{k}\right)-x^{k}\right\Vert 
\]
and, hence, by \eqref{eq:-11}, $\left\Vert U_{-j_{n}^{k}}\left(x^{k}\right)-x^{k}\right\Vert \rightarrow0$.
Since $U_{-j_{n}^{k}}$ is approximately shrinking, it follows from
\eqref{eq:-8} that 
\[
\lim_{k\rightarrow\infty}\max_{i\in I_{n}^{k}}d\left(x^{k},\mathrm{Fix}U_{i}\right)=\lim_{k\rightarrow\infty}d\left(x^{k},\mathrm{Fix}U_{-j_{n}^{k}}\right)=0,
\]
which proves \eqref{eq:-6}.

For the next two cases, assume that $c_{k}\left(n\right)\not=0$ for
each $k\in\mathbb{N}$ and let $\left\{ z^{k}\right\} _{k=0}^{\infty}$
be a sequence such that $z^{k}\in\cap_{i\in I_{n}^{k}}\mathrm{Fix}U_{i}\cap B\left(0,r\right)$
for each $k\in\mathbb{N}$ and $R>0$ so that $\left\Vert x^{k}-z^{k}\right\Vert <R$
for all $k\in\mathbb{N}$ is satisfied.

\textbf{\textit{Case 2:}}\textit{ }$c_{k}\left(n\right)=1$ for each
$k\in\mathbb{N}$. In this case (by \eqref{eq:-3}) $V_{n}^{k}=\sum_{j\in J_{n}^{k}}\omega_{n}^{k}\left(j\right)V_{j}^{k}$,
where $J_{n}^{k}\subset L_{n-1}$ and the weights $\left\{ \omega_{n}^{k}\left(j\right)\right\} _{j\in J_{n}^{k}}$
are in the interval $\left[\varepsilon,1\right]$ for each $k\in\mathbb{N}$.
Moreover, $I_{n}^{k}=\cup_{j\in J_{n}^{k}}I_{j}^{k}$ for each $k\in\mathbb{N}$
(by \eqref{eq:}). By Lemmata \ref{lem2} and \ref{lem3}\textit{(ii)},
$V_{j}^{k}$ is $\frac{\varepsilon}{2\Pi_{i=1}^{j}P_{i}^{k}}$-strongly-quasi
nonexpansive and hence $\frac{\varepsilon}{2\Pi_{i=1}^{n}P_{i}^{k}}$-strongly-quasi
nonexpansive, for each $k\in\mathbb{N}$ and each $j\in J_{n}^{k}$,
and by Lemma \ref{lem3},
\[
\cap_{j\in J_{n}^{k}}\mathrm{Fix}V_{j}^{k}=\cap_{i\in I_{n}^{k}}\mathrm{Fix}U_{i}\not=\emptyset
\]
 for each $k\in\mathbb{N}$. By Proposition \ref{Inequalities=000020for=000020compositions=000020and=000020convex=000020combinations}
and \eqref{eq:-11}, for each $k\in\mathbb{N}$, we have
\[
\left\Vert V_{n}^{k}\left(x^{k}\right)-x^{k}\right\Vert \ge\frac{\varepsilon^{2}}{4R\Pi_{i=0}^{n}P_{i}^{k}}\sum_{j\in J_{n}^{k}}\left\Vert V_{j}^{k}\left(x^{k}\right)-x^{k}\right\Vert ^{2}\rightarrow0,
\]
which implies $\left\Vert V_{j}^{k}\left(x^{k}\right)-x^{k}\right\Vert ^{2}\rightarrow0$
for each $j\in J_{n}^{k}$. Now, by the induction hypothesis and \eqref{eq:}
(since $U_{-j}$ is approximately shrinking for a non-positive $j$),
$\lim_{k\rightarrow\infty}\max_{i\in I_{j}^{k}}d\left(x^{k},\mathrm{Fix}U_{i}\right)=0$
for each $j\in J_{n}^{k}$. It follows then from \eqref{eq:} that
\[
\lim_{k\rightarrow\infty}\max_{i\in I_{n}^{k}}d\left(x^{k},\mathrm{Fix}U_{i}\right)=0.
\]

\textbf{\textit{Case 3:}}\textit{ }$c_{k}\left(n\right)=2$ for each
$k\in\mathbb{N}$. In this case (by \eqref{eq:-3}) for each $k\in\mathbb{N}$,
we have $V_{n}^{k}=V_{o_{n}^{k}\left(P_{n}^{k}\right)}^{k}\cdots V_{o_{n}^{k}\left(2\right)}^{k}V_{o_{n}^{k}\left(1\right)}^{k}$,
where $o_{n}^{k}:\left\{ 1,2,\dots,P_{n}^{k}\right\} \rightarrow J_{n}^{k}$
is an order function and $J_{n}^{k}\subset L_{n-1}$. Moreover, $I_{n}^{k}=\cup_{j\in J_{n}^{k}}I_{j}^{k}$
for each $k\in\mathbb{N}$. By Lemmata \ref{lem2} and \ref{lem3}\textit{(ii)},
for each $k\in\mathbb{N}$ and each $q=1,2,\dots,P_{n}^{k}$, the
operator $V_{o_{n}^{k}\left(q\right)}^{k}$ is $\frac{\varepsilon}{2\Pi_{i=1}^{o_{n}^{k}\left(q\right)}P_{i}^{k}}$-strongly-quasi
nonexpansive and hence $\frac{\varepsilon}{2\Pi_{i=1}^{n}P_{i}^{k}}$-strongly-quasi
nonexpansive. By Lemma \ref{lem3},
\begin{equation}
\cap_{q=1}^{P_{n}^{k}}\mathrm{Fix}V_{o_{n}^{k}\left(q\right)}^{k}=\cap_{j\in J_{n}^{k}}\mathrm{Fix}V_{j}^{k}=\cap_{i\in I_{n}^{k}}\mathrm{Fix}U_{i}\not=\emptyset.\label{eq:-13}
\end{equation}
 By Proposition \ref{Inequalities=000020for=000020compositions=000020and=000020convex=000020combinations}
and \eqref{eq:-11}, for each $k\in\mathbb{N}$, we have
\begin{equation}
\left\Vert V_{n}^{k}\left(x^{k}\right)-x^{k}\right\Vert \ge\frac{\varepsilon}{4R\Pi_{i=0}^{n}P_{i}^{k}}\sum_{q=1}^{P_{n}^{k}}\left\Vert S_{q}\left(x^{k}\right)-S_{q-1}\left(x^{k}\right)\right\Vert \rightarrow0,\label{eq:-10}
\end{equation}
where $S_{q}=\prod_{i=1}^{q}V_{o_{n}^{k}\left(q\right)}^{k}=V_{o_{n}^{k}\left(q\right)}^{k}\cdots V_{o_{n}^{k}\left(2\right)}V_{o_{n}^{k}\left(1\right)}$
for each $q=1,2,\dots,P_{n}^{k}$. Let $j\in J_{n}^{k}$ be arbitrary.
Since the mapping $o_{n}^{k}$ is onto $J_{n}^{k}$, there is $q_{0}\in\left\{ 1,2,\dots,P_{n}^{k}\right\} $
such that $j=o_{n}^{k}\left(q_{0}\right)$. This, combined with \eqref{eq:-10},
implies that for each $k\in\mathbb{N}$,
\begin{align}
\left\Vert S_{q_{0}}\left(x^{k}\right)-S_{q_{0}-1}\left(x^{k}\right)\right\Vert  & =\left\Vert V_{o_{n}^{k}\left(q_{0}\right)}^{k}S_{q_{0}-1}\left(x^{k}\right)-S_{q_{0}-1}\left(x^{k}\right)\right\Vert \nonumber \\
 & =\left\Vert V_{j}^{k}S_{q_{0}-1}\left(x^{k}\right)-S_{q_{0}-1}\left(x^{k}\right)\right\Vert \rightarrow0.\label{eq:-2}
\end{align}
Since for each $k\in\mathbb{N}$, the operator $S_{q_{0}}$ is quasi-nonexpansive
(by \eqref{eq:-13}, Lemma \ref{lem2}, Lemma \ref{lem3}\textit{(ii)}
and Theorem \ref{2.1.50}\textit{(ii)}), we see, from \eqref{eq:-13},
that
\[
\left\Vert S_{q_{0}-1}\left(x^{k}\right)-z^{k}\right\Vert \le\left\Vert x^{k}-z^{k}\right\Vert <R
\]
for each $k\in\mathbb{N}$, which implies that the sequence $\left\{ S_{q_{0}-1}\left(x^{k}\right)\right\} _{k=0}^{\infty}$
is bounded because the sequence $\left\{ z^{k}\right\} _{k=0}^{\infty}$
is bounded. By \eqref{eq:-2}, the induction hypothesis and \eqref{eq:}
(since $U_{-j}$ is approximately shrinking if $j$ is non-positive),
we obtain 
\begin{equation}
\lim_{k\rightarrow\infty}\max_{i\in I_{j}^{k}}d\left(S_{q_{0}-1}\left(x^{k}\right),\mathrm{Fix}U_{i}\right)=0.\label{eq:-12}
\end{equation}
 Now for each $k\in\mathbb{N}$, since $\cap_{i\in I_{n}^{k}}\mathrm{Fix}U_{i}\not=\emptyset$,
by Remark \ref{fix_cl_cnv}, the properties of the metric projection,
the triangle inequality, \eqref{eq:-10} and \eqref{eq:-12}, we have
\begin{align*}
\max_{i\in I_{j}^{k}}d\left(x^{k},\mathrm{Fix}U_{i}\right) & =\max_{i\in I_{j}^{k}}\left\Vert x^{k}-P_{\mathrm{Fix}U_{i}}\left(x^{k}\right)\right\Vert \le\max_{i\in I_{j}^{k}}\left\Vert x^{k}-P_{\mathrm{Fix}U_{i}}S_{q_{0}-1}\left(x^{k}\right)\right\Vert \\
 & \le\max_{i\in I_{j}^{k}}\left\Vert S_{q_{0}-1}\left(x^{k}\right)-P_{\mathrm{Fix}U_{i}}S_{q_{0}-1}\left(x^{k}\right)\right\Vert +\left\Vert S_{q_{0}-1}\left(x^{k}\right)-x^{k}\right\Vert \\
 & \le\max_{i\in I_{j}^{k}}\left\Vert S_{q_{0}-1}\left(x^{k}\right)-P_{\mathrm{Fix}U_{i}}S_{q_{0}-1}\left(x^{k}\right)\right\Vert +\sum_{q=1}^{q_{0}-1}\left\Vert S_{q}\left(x^{k}\right)-S_{q-1}\left(x^{k}\right)\right\Vert \\
 & =\max_{i\in I_{j}^{k}}d\left(S_{q_{0}-1}\left(x^{k}\right),\mathrm{Fix}U_{i}\right)+\sum_{q=1}^{q_{0}-1}\left\Vert S_{q}\left(x^{k}\right)-S_{q-1}\left(x^{k}\right)\right\Vert \rightarrow0.
\end{align*}
As a result, by \eqref{eq:} , $\lim_{k\rightarrow\infty}\max_{i\in I_{n}^{k}}d\left(x^{k},\mathrm{Fix}U_{i}\right)=0$.
Lemma \ref{lem5} is now proved.
\end{proof}
The next theorem specifies conditions under which the sequence $\left\{ x^{k}\right\} _{k=0}^{\infty}$,
generated by Algorithm \ref{alg=0000201}, is Fej{\'e}r monotone,
converges in the norm of $\mathcal{H}$ to a point $x\in\cap_{n=0}^{\infty}\mathrm{Fix}U_{n}$,
and is bounded perturbations resilient with respect to $\cap_{n=0}^{\infty}\mathrm{Fix}U_{n}$.
\begin{thm}
\label{main_res}Let $\left\{ U_{n}\right\} _{n=0}^{\infty}$ be a
sequence of $2^{-1}$-strongly quasi-nonexpansive operators such that
$\cap_{n=0}^{\infty}\mathrm{Fix}U_{n}\not=\emptyset$ and let $\rho\in\left[0,1\right]$.
Assume that the sequence $\left\{ N_{k}\right\} _{k=0}^{\infty}$
is bounded. Suppose further that for each $k\in\mathbb{N}$ and each
$n\in c_{k}^{-1}\left(\left\{ 0\right\} \right)$, in addition, $U_{j_{n}^{k}}$
is a cutter, or that $\alpha_{k}\left(n\right)=1$. Let $\left\{ T_{k}\right\} _{k=0}^{\infty}$
be a sequence of operators generated by the GMSA procedure as in \eqref{eq:-3},
and assume that $T_{k}$ is $\rho$-strongly quasi-nonexpansive operator
for each $k\in\mathbb{N}$. Let $\left\{ \lambda_{k}\right\} _{k=0}^{\infty}\subset\left[\varepsilon,1+\rho-\varepsilon\right]$
(where $\varepsilon\in\left(0,1\right]$) be a sequence. Then the
following statements hold:
\begin{enumerate}
\item For each $x^{0}\in\mathcal{H}$, the sequence $\left\{ x^{k}\right\} _{k=0}^{\infty}$
generated by Algorithm \ref{alg=0000201} with respect to the sequences
$\left\{ T_{k}\right\} _{k=0}^{\infty}$ and $\left\{ \lambda_{k}\right\} _{k=0}^{\infty}$
is strongly Fej{\'e}r monotone with respect to $\cap_{n=0}^{\infty}\mathrm{Fix}U_{n}$.
Namely, 
\[
\left\Vert x^{k+1}-z\right\Vert ^{2}\le\left\Vert x^{k}-z\right\Vert ^{2}-\varepsilon\left(1+\rho-\varepsilon\right)^{-1}\left\Vert \left(x^{k+1}-x^{k}\right)\right\Vert ^{2}
\]
for each $k\in\mathbb{N}$, and as a result, $\left\Vert x^{k+1}-x^{k}\right\Vert \rightarrow0$.
\item If, in addition, the sequence $\left\{ I_{N_{k}}^{k}\right\} _{k=0}^{\infty}$
satisfies Equation \eqref{eq:-16}, the family $\left\{ \mathrm{Fix}U_{n}\right\} _{n=0}^{\infty}$
is boundedly regular and the operator $U_{n}$ is approximately shrinking
for each $n\in\mathbb{N}$, then for each $x^{0}\in\mathcal{H}$,
the sequence $\left\{ x^{k}\right\} _{k=0}^{\infty}$ generated by
Algorithm \ref{alg=0000201} with respect to the sequences $\left\{ T_{k}\right\} _{k=0}^{\infty}$
and $\left\{ \lambda_{k}\right\} _{k=0}^{\infty}$ converges in the
norm of $\mathcal{H}$ to a point $x\in\cap_{n=0}^{\infty}\mathrm{Fix}U_{n}$.
\item Under the assumptions of (ii), if for each $k\in\mathbb{N}$, the
$\lambda_{k}$-relaxation of the operator $T_{k}$ is nonexpansive,
then Algorithm \ref{alg=0000201} with respect to the sequences $\left\{ T_{k}\right\} _{k=0}^{\infty}$
and $\left\{ \lambda_{k}\right\} _{k=0}^{\infty}$ is bounded perturbations
resilient with respect to $\cap_{n=0}^{\infty}\mathrm{Fix}U_{n}$.
\end{enumerate}
\end{thm}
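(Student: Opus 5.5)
The plan is to prove the three statements in order, with (ii) resting on (i) and Lemma \ref{lem5}, and (iii) resting on (ii) and Theorem \ref{error}. Throughout we write $C:=\cap_{n=0}^{\infty}\mathrm{Fix}U_{n}$.

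\emph{Statement (i).} First, $C\subseteq\mathrm{Fix}T_{k}$ for every $k$. Since $C\neq\emptyset$, each $\cap_{i\in I_{N_{k}}^{k}}\mathrm{Fix}U_{i}$ is nonempty, so Lemma \ref{lem3} gives $\mathrm{Fix}T_{k}=\mathrm{Fix}V_{N_{k}}^{k}=\cap_{i\in I_{N_{k}}^{k}}\mathrm{Fix}U_{i}\supseteq C$.

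Next, rewrite the iteration as a relaxed cutter iteration. Since $T_{k}$ is $\rho$-strongly quasi-nonexpansive, Corollary \ref{cor:2.7} shows that $W_{k}:=Id+\frac{1+\rho}{2}(T_{k}-Id)$ is a cutter with $\mathrm{Fix}W_{k}=\mathrm{Fix}T_{k}$. Then
\[
x^{k+1}=x^{k}+\mu_{k}\left(W_{k}(x^{k})-x^{k}\right),\qquad\mu_{k}:=\frac{2\lambda_{k}}{1+\rho}\in(0,2).
\]
By Theorem \ref{thm:2.1.39}, for every $z\in C$,
\[
\|x^{k+1}-z\|^{2}\le\|x^{k}-z\|^{2}-\frac{2-\mu_{k}}{\mu_{k}}\|x^{k+1}-x^{k}\|^{2},\qquad\frac{2-\mu_{k}}{\mu_{k}}=\frac{1+\rho-\lambda_{k}}{\lambda_{k}}.
\]
Because $\lambda_{k}\in[\varepsilon,1+\rho-\varepsilon]$, the coefficient satisfies $\frac{1+\rho-\lambda_{k}}{\lambda_{k}}\ge\frac{\varepsilon}{1+\rho-\varepsilon}$. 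Summing the inequality over $k$ (telescoping) shows that $\sum_{k}\|x^{k+1}-x^{k}\|^{2}<\infty$, and hence $\|x^{k+1}-x^{k}\|\rightarrow0$.

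\emph{Statement (ii).} By (i), $\{x^{k}\}$ is bounded, and $\|T_{k}(x^{k})-x^{k}\|=\lambda_{k}^{-1}\|x^{k+1}-x^{k}\|\le\varepsilon^{-1}\|x^{k+1}-x^{k}\|\rightarrow0$.

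Fix any $z\in C$ and choose $r>\|z\|$. Then $z$ lies in every set $\cap_{i\in I_{n}^{k}}\mathrm{Fix}U_{i}\cap B(0,r)$, so the ball hypothesis of Lemma \ref{lem5} holds. Lemma \ref{lem5} then gives
\[
\lim_{k\rightarrow\infty}\max_{i\in I_{N_{k}}^{k}}d\left(x^{k},\mathrm{Fix}U_{i}\right)=0.
\]

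Now fix $n\in\mathbb{N}$. By \eqref{eq:-16}, for each $k$ there is $k'\in\{k,\dots,k+M_{n}-1\}$ with $n\in I_{N_{k'}}^{k'}$. Since $d(\cdot,\mathrm{Fix}U_{n})$ is $1$-Lipschitz,
\[
d(x^{k},\mathrm{Fix}U_{n})\le d(x^{k'},\mathrm{Fix}U_{n})+\sum_{m=k}^{k+M_{n}-2}\|x^{m+1}-x^{m}\|.
\]
Both terms tend to $0$, since $M_{n}$ is fixed. Hence $d(x^{k},\mathrm{Fix}U_{n})\rightarrow0$ for every $n$.

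Bounded regularity then yields $d(x^{k},C)\rightarrow0$. The set $C$ is closed and convex by Remark \ref{fix_cl_cnv}, and $\{x^{k}\}$ is Fej\'er monotone with respect to $C$ by (i). Theorem \ref{Fejer} therefore gives norm convergence of $\{x^{k}\}$ to a point of $C$.

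\emph{Statement (iii).} Put $R_{k}:=Id+\lambda_{k}(T_{k}-Id)$. Each $R_{k}$ is nonexpansive by assumption, and $C\subseteq\mathrm{Fix}T_{k}=\mathrm{Fix}R_{k}$ by Remark \ref{Relaxation=000020has=000020the=000020same=000020Fix}. A perturbed sequence $y^{k+1}=R_{k}(y^{k}+\beta_{k}v^{k})$ satisfies
\[
\|y^{k+1}-R_{k}(y^{k})\|\le\beta_{k}\|v^{k}\|=:\gamma_{k},
\]
and $\sum_{k}\gamma_{k}<\infty$ because $\{v^{k}\}$ is bounded and $\sum_{k}\beta_{k}<\infty$. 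So Theorem \ref{error} will apply once we verify its remaining hypothesis: for every $q\in\mathbb{N}$ and $y\in\mathcal{H}$, the sequence $R_{q+k}\cdots R_{q}(y)$ converges in norm to a point of $C$.

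This hypothesis is the main obstacle. The plan is to regard the shifted sequence as a GMSA-generated sequence in its own right. We apply Lemma \ref{lem1} with the strictly increasing sequence $\ell_{k}:=q+k$, together with the shifted relaxation parameters $\lambda_{q+k}$. The hypotheses of (ii) should survive the shift:
\begin{itemize}
\item the numbers of recursive steps $N_{q+k}$ remain bounded;
\item the cutter-or-$\alpha=1$ condition holds index by index, so it is unaffected;
\item each $T_{q+k}$ is still $\rho$-strongly quasi-nonexpansive;
\item \eqref{eq:-16} holds for all starting indices $i$, so it also holds for the shifted index sets $I_{N_{q+k}}^{q+k}$ with the same constants $M_{n}$.
\end{itemize}
Applying (ii) to the shifted scheme started at $y$ gives the required convergence to a point of $C$. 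Theorem \ref{error} then yields that every perturbed sequence $\{y^{k}\}$ converges in norm to a point of $C$, which is bounded perturbation resilience with respect to $C$.
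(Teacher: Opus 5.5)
Your proposal follows the paper's proof of Theorem \ref{main_res} essentially step for step. Part (i) goes through Corollary \ref{cor:2.7} and Theorem \ref{thm:2.1.39}. Part (ii) goes through Lemma \ref{lem5}, the admissible-control window, bounded regularity and Theorem \ref{Fejer}. Part (iii) goes through the shift $\ell_k=q+k$, Lemma \ref{lem1}, part (ii) and Theorem \ref{error}. In places you are more careful than the paper:
\begin{itemize}
\item you check $C\subseteq\mathrm{Fix}\,T_k$ via Lemma \ref{lem3};
\item you verify the ball hypothesis of Lemma \ref{lem5} with a point of $C$;
\item you apply Lemma \ref{lem5} to the whole sequence rather than to $\{x^{k+\ell_k}\}$. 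This avoids the paper's use of Lemma \ref{lem1} for an index sequence $k+\ell_k$ that need not be increasing.
\end{itemize}

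\textbf{The gap.} The step you take on trust, which the paper shares, does not hold: Lemma \ref{lem5} is false as stated, and part (ii) fails without a bound on the $P^k_n$. Here is a counterexample in $\mathcal{H}=\mathbb{R}^2$.
\begin{itemize}
\item Let the $U_n$ enumerate the projections onto the lines $L_\theta$ through $0$ with dyadic angles $\theta=a\pi/2^m\in[0,\pi/2]$. These are firmly nonexpansive and approximately shrinking, $C=\{0\}$, and the family is boundedly regular since $\dim\mathcal{H}<\infty$.
\item Let $N_k=1$ and $c_k(1)=2$. Let $T_k$ project successively onto $L_0,L_{\delta_k},\dots,L_{\pi/2}$ and back down to $L_0$, where $\delta_k=\pi/2^{k+4}$.
\item Then $T_k=c_kP_{L_0}$ with $c_k=\cos^{2^{k+4}}(\delta_k)\in(0,1)$. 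So each $T_k$ is firmly nonexpansive and you may take $\rho=1$.
\item Each dyadic angle belongs to $I^k_{N_k}$ for all large $k$, so \eqref{eq:-16} holds.
\item With $\varepsilon=\lambda_k=1$ and $x^0=(1,0)$ you get $x^k=(\prod_{i<k}c_i,0)\to(\prod_{i\ge0}c_i,0)\neq0$, because $1-c_i=O(2^{-i})$.
\end{itemize}
Thus $T_k(x^k)-x^k\to0$, yet $d(x^k,L_{\pi/2})\not\to0$, although the index of $L_{\pi/2}$ lies in every $I^k_{N_k}$. Your displayed consequence of Lemma \ref{lem5} fails, and $\{x^k\}$ does not converge to a point of $C$.

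What breaks is Case 3 of the proof of Lemma \ref{lem5}. The constant in \eqref{eq:-10} degenerates as $P^k_n\to\infty$. A sum of $P^k_1$ individually small steps need not be small; here the intermediate points sweep a quarter circle and back.

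There is a separate problem even with bounded $P^k_n$. Lemma \ref{lem5} cannot be used as a black box for $\max_{i\in I^k_{N_k}}$, because approximate shrinking is not uniform over a varying family. In $\mathbb{R}$, the maps $U_n(x)=\frac{n}{n+1}x$ are $2^{-1}$-strongly quasi-nonexpansive, approximately shrinking cutters. Take $T_k=U_k$ (with $N_k=1$, $c_k(1)=0$, $\alpha_k(1)=1$) and $x^k\equiv1$. Then $T_k(x^k)-x^k\to0$ but $d(x^k,\mathrm{Fix}\,U_k)=1$.

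\textbf{A repair.} Assume in addition $M:=\sup_{k,n}P^k_n<\infty$, and set $K:=\sup_kN_k$. Replace the appeal to Lemma \ref{lem5} by a fixed-$n$ statement: if $\{x^k\}$ is bounded, $T_k(x^k)-x^k\to0$, and $n\in I^k_{N_k}$ along a subsequence, then $d(x^k,\mathrm{Fix}\,U_n)\to0$ along it. Prove this by following one branch of the module tree down to an occurrence of $U_n$, using a radius $R$ obtained from a fixed $z\in C$.
\begin{itemize}
\item At each convex-combination node, Proposition \ref{Inequalities=000020for=000020compositions=000020and=000020convex=000020combinations} applies with uniform constants: weights at least $\varepsilon$ and strong quasi-nonexpansiveness constants at least $\varepsilon/(2M^K)$. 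It makes the displacement of the chosen child vanish.
\item At each composition node, each of at most $M$ steps vanishes, so the base point moves by $o(1)$.
\item The depth of the branch is at most $K+1$.
\item At the leaf, only approximate shrinking of the single operator $U_n$ is needed.
\end{itemize}
With this statement in place, your window argument and part (iii) go through unchanged.
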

\begin{proof}
For each $k\in\mathbb{N}$, denote by $S_{k}$ the $\lambda_{k}$-relaxation
of $T_{k}$. \\
\textit{(i)} Let $x^{0}\in\mathcal{H}$ and let $\left\{ x^{k}\right\} _{k=0}^{\infty}$
be the sequence generated by Algorithm \ref{alg=0000201} with respect
to the sequences $\left\{ T_{k}\right\} _{k=0}^{\infty}$ and $\left\{ \lambda_{k}\right\} _{k=0}^{\infty}$.
Assume that $k\in\mathbb{N}$ and that $z\in\cap_{n=0}^{\infty}\mathrm{Fix}U_{n}$.
By Corollary \ref{cor:2.7}, the $2^{-1}\left(1+\rho\right)$-relaxation
of $T_{k}$ is a cutter, that is, is $1$-strongly quasi-nonexpansive.
Therefore, its $2\lambda_{k}\left(1+\rho\right)^{-1}$-relaxation,
which is the $\lambda_{k}$-relaxation of $T_{k}$, is $\left(1+\rho-\lambda_{k}\right)\lambda_{k}^{-1}$-strongly
quasi-nonexpansive by Theorem \ref{thm:2.1.39}. It follows that
\begin{align}
\left\Vert x^{k+1}-z\right\Vert ^{2} & =\left\Vert S_{k}x^{k}-z\right\Vert ^{2}\le\left\Vert x^{k}-z\right\Vert ^{2}-\left(1+\rho-\lambda_{k}\right)\lambda_{k}^{-1}\left\Vert S_{k}x^{k}-x^{k}\right\Vert ^{2}\nonumber \\
 & =\left\Vert x^{k}-z\right\Vert ^{2}-\left(1+\rho-\lambda_{k}\right)\lambda_{k}^{-1}\left\Vert x^{k+1}-x^{k}\right\Vert ^{2}\nonumber \\
 & \le\left\Vert x^{k}-z\right\Vert ^{2}-\varepsilon\left(1+\rho-\varepsilon\right)^{-1}\left\Vert x^{k+1}-x^{k}\right\Vert ^{2}.\label{eq:-1-2}
\end{align}
Since $\varepsilon\left(1+\rho-\varepsilon\right)^{-1}>0$, we see
from \eqref{eq:-1-2} that $\left\{ x^{k}\right\} _{k=0}^{\infty}$
is strongly Fej{\'e}r monotone with respect to $\cap_{n=0}^{\infty}\mathrm{Fix}U_{n}$.
Since the real sequence $\left\{ \left\Vert x^{k}-z\right\Vert ^{2}\right\} _{k=0}^{\infty}$
is monotonically decreasing and bounded from below by $0$, it converges
and, by \eqref{eq:-1-2}, $\left\Vert x^{k+1}-x^{k}\right\Vert \rightarrow0$.

\textit{(ii)} Let $x^{0}\in\mathcal{H}$ and let $\left\{ x^{k}\right\} _{k=0}^{\infty}$
be the sequence generated by Algorithm \ref{alg=0000201} with respect
to the sequences $\left\{ T_{k}\right\} _{k=0}^{\infty}$ and $\left\{ \lambda_{k}\right\} _{k=0}^{\infty}$.
Assume that the sequence $\left\{ I_{N_{K}}^{k}\right\} _{k=0}^{\infty}$
satisfies Equation \eqref{eq:-16}, that the family $\left\{ \mathrm{Fix}U_{n}\right\} _{n=0}^{\infty}$
is boundedly regular and that the operator $U_{n}$ is approximately
shrinking for each $n\in\mathbb{N}$. Let $n\in\mathbb{N}$ be a natural
number. Clearly, there is a sequence $\left\{ \ell_{k}\right\} _{k=0}^{\infty}\subset\left\{ 0,1,\dots,M_{n}-1\right\} $
such that $n\in I_{N_{k+\ell_{k}}}^{k+\ell_{k}}$ for each $k\in\mathbb{N}$.
We have
\begin{align}
\lambda_{k+\ell_{k}}\left(T_{k+\ell_{k}}\left(x^{k+\ell_{k}}\right)-x^{k+\ell_{k}}\right) & =S_{k+\ell_{k}}\left(x^{k+\ell_{k}}\right)-x^{k+\ell_{k}}=x^{k+\ell_{k}+1}-x^{k+\ell_{k}}\label{eq:-20}
\end{align}
for each $k\in\mathbb{N}$. Since $\left\{ \lambda_{k+\ell_{k}}\right\} _{k=0}^{\infty}\subset\left[\varepsilon,1+\rho-\varepsilon\right]$,
we obtain, from \eqref{eq:-20} and \textit{(i),} that 
\[
\lim_{k\rightarrow\infty}\left\Vert T_{k+\ell_{k}}\left(x^{k+\ell_{k}}\right)-x^{k+\ell_{k}}\right\Vert =0.
\]
 The sequence $\left\{ x^{k+\ell_{k}}\right\} _{k=0}^{\infty}$ is
bounded because it is Fej{\'e}r monotone with respect to $\cap_{n=0}^{\infty}\mathrm{Fix}U_{n}$
by \textit{(i)}. Hence, by Lemmata \ref{lem1} and \ref{lem5}, 
\begin{equation}
\lim_{k\rightarrow\infty}\max_{i\in I_{N_{k+\ell_{k}}}^{k+\ell_{k}}}d\left(x^{k+\ell_{k}},\mathrm{Fix}U_{i}\right)=0.\label{eq:-22}
\end{equation}
Now for each $k\in\mathbb{N}$,

\begin{equation}
\left\Vert x^{k+\ell_{k}}-x^{k}\right\Vert =\left\Vert \sum_{i=0}^{\ell_{k}-1}\left(x^{k+i+1}-x^{k+i}\right)\right\Vert \le\sum_{i=0}^{\ell_{k}-1}\left\Vert \left(x^{k+i+1}-x^{k+i}\right)\right\Vert .\label{eq:-21}
\end{equation}
Due to the finite number of summands in \eqref{eq:-21} we deduce
from \textit{(i)} that 
\begin{equation}
\lim_{k\rightarrow\infty}\left\Vert x^{k+\ell_{k}}-x^{k}\right\Vert =0.\label{eq:-23}
\end{equation}
Since $n\in I_{N_{k+\ell_{k}}}^{k+\ell_{k}}$ for each $k\in\mathbb{N}$
and $\mathrm{Fix}U_{n}\not=\emptyset$, we see, by \eqref{eq:-23},
Remark \ref{fix_cl_cnv}, the definition of the metric projection
and the triangle inequality, that for each $k\in\mathbb{N}$,
\begin{align}
d\left(x^{k},\mathrm{Fix}U_{n}\right) & =\left\Vert x^{k}-P_{\mathrm{Fix}U_{n}}\left(x^{k}\right)\right\Vert \le\left\Vert x^{k}-P_{\mathrm{Fix}U_{n}}\left(x^{k+\ell_{k}}\right)\right\Vert \nonumber \\
 & \le\left\Vert x^{k+\ell_{k}}-x^{k}\right\Vert +\left\Vert x^{k+\ell_{k}}-P_{\mathrm{Fix}U_{n}}\left(x^{k+\ell_{k}}\right)\right\Vert \nonumber \\
 & =\left\Vert x^{k+\ell_{k}}-x^{k}\right\Vert +d\left(x^{k+\ell_{k}},\mathrm{Fix}U_{n}\right)\nonumber \\
 & \le\left\Vert x^{k+\ell_{k}}-x^{k}\right\Vert +\max_{i\in I_{N_{k+\ell_{k}}}^{k+\ell_{k}}}d\left(x^{k+\ell_{k}},\mathrm{Fix}U_{i}\right).\label{eq:-5}
\end{align}
Combining \eqref{eq:-5} with \eqref{eq:-22} and \eqref{eq:-23},
we obtain that $\lim_{k\rightarrow\infty}d\left(x^{k},\mathrm{Fix}U_{n}\right)=0$.
Since $n\in\mathbb{N}$ is arbitrary, the bounded regularity of the
family $\left\{ \mathrm{Fix}U_{n}\right\} _{n=0}^{\infty}$ implies
that
\begin{equation}
\lim_{k\rightarrow\infty}d\left(x^{k},\cap_{n=0}^{\infty}\mathrm{Fix}U_{n}\right)=0.\label{eq:-25}
\end{equation}
It now follows, from \eqref{eq:-25}, Remark \ref{fix_cl_cnv} and
Theorem \ref{Fejer}, that the sequence $\left\{ x^{k}\right\} _{k=0}^{\infty}$
converges in the norm of $\mathcal{H}$ to a point $x\in\cap_{n=0}^{\infty}\mathrm{Fix}U_{n}$.

\textit{(iii)} Assume that the assumptions of \textit{(ii)} hold and
that the $\lambda_{k}$-relaxation of the operator $T_{k}$ is nonexpansive.
Define $C:=\cap_{n=0}^{\infty}\mathrm{Fix}U_{n}$. Let $\left\{ \beta_{k}\right\} _{k=0}^{\infty}$
be a sequence of positive real numbers such that $\sum_{k=0}^{\infty}\beta_{k}<\infty$
and let $\{v^{k}\}_{k=0}^{\infty}$ be a bounded sequence in $\mathcal{H}$.
Suppose that $y^{0}\in\mathcal{H}$ and consider the sequence $\left\{ y^{k}\right\} _{k=0}^{\infty}$
generated by the iterative process $y^{k+1}:=S_{k}(y^{k}+\beta_{k}v^{k})$.
Let $q\in\mathbb{N}$ and $y\in\mathcal{H}$ be arbitrary. For each
$k\in\mathbb{N}$, Define $\gamma_{k}:=\beta_{k}\left\Vert v^{k}\right\Vert \in\left[0,\infty\right)$
and $\ell_{k}:=q+k$. Define $x^{0}:=y$. Clearly, $\sum_{k=0}^{\infty}\gamma_{k}<\infty$
and the sequence $\left\{ I_{N_{\ell_{k}}}^{\ell_{k}}\right\} _{k=0}^{\infty}$
satisfies Equation \eqref{eq:-16}. By \ref{lem1} and \textit{(ii)},
the sequence $\left\{ x^{k}\right\} _{k=0}^{\infty}$ generated by
Algorithm \ref{alg=0000201} with respect to the sequences $\left\{ T_{\ell_{k}}\right\} _{k=0}^{\infty}$
and $\left\{ \lambda_{\ell_{k}}\right\} _{k=0}^{\infty}$ converges
in the norm of $\mathcal{H}$ to a point in $C$. Then, for each $k\in\mathbb{N}$,
we have 
\[
x^{k+1}=S_{\ell_{k}}\left(x^{k}\right)=S_{\ell_{k}}\cdots S_{\ell_{1}}S_{\ell_{0}}\left(y\right)=S_{q+k}\cdots S_{q+1}S_{q}\left(y\right)
\]
and hence the sequence $\left\{ S_{q+k}\cdots S_{q+1}S_{q}\left(y\right)\right\} _{k=0}^{\infty}$
converges in the norm of $\mathcal{H}$ to an element of $C$ for
an arbitrary $y\in\mathcal{H}$. Since for each $k\in\mathbb{N}$,
the operator $S_{q+k}$ is nonexpansive, we obtain
\begin{align*}
\left\Vert y^{k+1}-S_{k}\left(y^{k}\right)\right\Vert  & =\left\Vert S_{k}(y^{k}+\beta_{k}v^{k})-S_{k}\left(y^{k}\right)\right\Vert \le\beta_{k}\left\Vert v^{k}\right\Vert =\gamma_{k}.
\end{align*}
By Lemma \ref{lem3}, Equation \eqref{eq:-16} and Remark \ref{fix_cl_cnv},
$C=\cap_{k=0}^{\infty}\mathrm{Fix}T_{k}=\cap_{k=0}^{\infty}\mathrm{Fix}S_{k}$.
We now deduce from Theorem \ref{error} along with Remark \ref{fix_cl_cnv}
that the sequence $\left\{ y^{k}\right\} _{k=0}^{\infty}$ converges
in the norm of $\mathcal{H}$ to an element of $C$ as well, proving
that the sequence $\left\{ x^{k}\right\} _{k=0}^{\infty}$ is bounded
perturbations resilient with respect to $C$.
\end{proof}
\begin{cor}
Let $\left\{ U_{n}\right\} _{n=0}^{\infty}$ be a sequence of $2^{-1}$-strongly
quasi-nonexpansive operators such that $\cap_{n=0}^{\infty}\mathrm{Fix}U_{n}\not=\emptyset$.
Suppose further that for each $k\in\mathbb{N}$ and each $n\in c_{k}^{-1}\left(\left\{ 0\right\} \right)$,
$U_{j_{n}^{k}}$ is a cutter, or that $\alpha_{k}\left(n\right)=1$.
Assume that the sequences $\left\{ N_{k}\right\} _{k=0}^{\infty}$
and $\left\{ \left\{ P_{n}^{k}\right\} _{n\in\mathbb{N}\backslash\left\{ 0\right\} \cap L_{N_{k}}}\right\} _{k=0}^{\infty}$
are bounded. Define $\rho:=\frac{\varepsilon}{2M^{K}}$, where $K:=\max_{k\in\mathbb{N}}N_{k}$
and $M:=\max_{k\in\mathbb{N}}\left\{ P_{n}^{k}\right\} _{n\in\mathbb{N}\backslash\left\{ 0\right\} \cap L_{N_{k}}}$.
Let $\left\{ T_{k}\right\} _{k=0}^{\infty}$ be a sequence generated
by the GMSA procedure as in \eqref{eq:-3}, and let $\left\{ \lambda_{k}\right\} _{k=0}^{\infty}\subset\left[1+\rho-\varepsilon\right]$
(where $\varepsilon\in\left(0,1\right]$) be a sequence. Then, given
$x^{0}\in\mathcal{H}$, the sequence $\left\{ x^{k}\right\} _{k=0}^{\infty}$
generated by Algorithm \ref{alg=0000201} with respect to the sequences
$\left\{ T_{k}\right\} _{k=0}^{\infty}$ and $\left\{ \lambda_{k}\right\} _{k=0}^{\infty}$
satisfies all the statements of Theorem \ref{main_res}.
\end{cor}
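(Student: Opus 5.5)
The plan is to reduce the corollary to Theorem \ref{main_res}. The only hypothesis of that theorem not listed explicitly in the corollary is that each $T_{k}$ is $\rho$-strongly quasi-nonexpansive with $\rho=\frac{\varepsilon}{2M^{K}}\in\left(0,1\right]$. The relaxation interval in the statement should be read as $\left[\varepsilon,1+\rho-\varepsilon\right]$, which is what Theorem \ref{main_res} requires. Once the strong quasi-nonexpansiveness of $T_{k}$ is established with this uniform $\rho$, all three assertions (i)--(iii) follow verbatim from Theorem \ref{main_res}, each under the additional hypotheses stated there.

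First, I will show that $\mathrm{Fix}V_{n}^{k}\neq\emptyset$ for all $k$ and all $n\in L_{N_{k}}$. Since $I_{n}^{k}\subset\mathbb{N}$, we have $\cap_{i\in I_{n}^{k}}\mathrm{Fix}U_{i}\supseteq\cap_{n=0}^{\infty}\mathrm{Fix}U_{n}\neq\emptyset$. Hence the hypothesis of Lemma \ref{lem3} holds, and it gives $\mathrm{Fix}V_{n}^{k}=\cap_{i\in I_{n}^{k}}\mathrm{Fix}U_{i}\neq\emptyset$.

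Next, fix $k$ and take $n=N_{k}$, which is positive. By Lemma \ref{lem3}(i) if $c_{k}(N_{k})=0$, using the cutter or $\alpha_{k}(n)=1$ assumption, and by Lemma \ref{lem3}(ii) otherwise, $T_{k}=V_{N_{k}}^{k}$ is $\frac{\varepsilon}{2\Pi_{i=1}^{N_{k}}P_{i}^{k}}$-strongly quasi-nonexpansive. Each $P_{i}^{k}$ is a positive integer with $P_{i}^{k}\le M$, and $N_{k}\le K$. Since $M\ge1$, this gives $\Pi_{i=1}^{N_{k}}P_{i}^{k}\le M^{N_{k}}\le M^{K}$. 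A $\rho'$-strongly quasi-nonexpansive operator is clearly $\rho''$-strongly quasi-nonexpansive for every $0\le\rho''\le\rho'$, because the subtracted term only decreases. Therefore $T_{k}$ is $\rho$-strongly quasi-nonexpansive. Also $\rho\le\varepsilon/2\le1$, so $\rho\in\left[0,1\right]$ as Theorem \ref{main_res} requires.

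The maxima defining $K$ and $M$ exist because the sequences are bounded sets of positive integers. Boundedness of $\{N_{k}\}$, needed in Theorem \ref{main_res}, is assumed directly. Finally I invoke Theorem \ref{main_res} with this $\rho$.

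The only delicate point, rather than a real obstacle, is to make sure the constant in Lemma \ref{lem3} is stated with the product up to $n$, not beyond it. The proof of Lemma \ref{lem5} writes $\Pi_{i=0}^{n}$ at one point, and that should be read as $\Pi_{i=1}^{n}$. Under either reading the bound by $M^{K}$, or by $M^{K+1}$ if needed after adjusting the constant, is uniform in $k$, and that uniformity is all the argument uses.
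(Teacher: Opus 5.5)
Your proposal is correct and follows the paper's proof. Lemma \ref{lem3} makes $T_k=V_{N_k}^k$ an $\frac{\varepsilon}{2\Pi_{i=1}^{N_k}P_i^k}$-strongly quasi-nonexpansive operator, the product is at most $M^{N_k}\le M^K$, so $T_k$ is $\rho$-strongly quasi-nonexpansive with $\rho\in\left[0,1\right]$, and Theorem \ref{main_res} then applies. You also spell out details the paper leaves implicit, all correctly: that $\cap_{n}\mathrm{Fix}U_n\neq\emptyset$ supplies the nonemptiness hypothesis of Lemma \ref{lem3}, that the strong quasi-nonexpansiveness constant may be decreased, and that the relaxation interval should be read as $\left[\varepsilon,1+\rho-\varepsilon\right]$.
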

\begin{proof}
Clearly, $\rho\in\left[0,1\right]$. By Lemma \ref{lem3}, the operator
$T_{k}$ is $\frac{\varepsilon}{2\Pi_{i=1}^{N_{k}}P_{i}^{k}}$-strongly
quasi-nonexpansive and hence $\rho$- strongly nonexpansive for each
$k\in\mathbb{N}$.

The result now follows from Theorem \ref{main_res}.
\end{proof}

\subsection{\protect\label{firm_nonex}The method based on relaxations of firmly
nonexpansive operators}

In this subsection we consider the particular case of $2^{-1}$-strongly
quasi-nonexpansive input operators, wherein they are $2^{-1}$-firmly
quasi-nonexpansive. In this case we establish the non-expansiveness
of the $\lambda_{k}$-relaxations of the operators $\left\{ T_{k}\right\} _{k=0}^{\infty}$
from Theorem \ref{main_res}\textit{(iii)},\textit{ }as the following
lemma\textit{ }shows\textit{.}
\begin{lem}
\label{outpt_nonexp}Let $\left\{ T_{k}\right\} _{k=0}^{\infty}$
be a sequence of operators generated by the GMSA procedure as in \eqref{eq:-3},
let $\rho\in\left[0,1\right]$ and assume that $T_{k}$ is a $\rho$-firmly
nonexpansive operator for each $k\in\mathbb{N}$. Let $\varepsilon\in\left(0,1\right]$
and let $\left\{ \lambda_{k}\right\} _{k=0}^{\infty}\subset\left[\varepsilon,1+\rho-\varepsilon\right]$
be a sequence. Then for each $k\in\mathbb{N}$, the $\lambda_{k}$-relaxation
of $T_{k}$, $T_{k\lambda_{k}}$ is nonexpansive.
\end{lem}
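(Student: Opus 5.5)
By Corollary \ref{cor:=000020frm-nexp}, applied to $U:=T_k$ with the given $\rho\ge0$, the operator $F_k:=\left(T_k\right)_{2^{-1}(1+\rho)}=Id+2^{-1}(1+\rho)(T_k-Id)$ is firmly nonexpansive. The plan is to write the $\lambda_k$-relaxation of $T_k$ as a relaxation of $F_k$ and then invoke Theorem \ref{FNE_relax_FNE}. Since $1+\rho>0$, we have $T_k-Id=2(1+\rho)^{-1}(F_k-Id)$, and therefore
\[
T_{k\lambda_k}=Id+\lambda_k\left(T_k-Id\right)=Id+\mu_k\left(F_k-Id\right)=\left(F_k\right)_{\mu_k},\qquad \mu_k:=\frac{2\lambda_k}{1+\rho}.
\]

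Next I check that $\mu_k\in(0,2]$. The hypothesis $\lambda_k\in[\varepsilon,1+\rho-\varepsilon]$ with $\varepsilon>0$ gives $0<\mu_k\le 2(1+\rho-\varepsilon)(1+\rho)^{-1}<2$. So $\mu_k$ lies strictly inside $(0,2)$, which is within the admissible range.

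Theorem \ref{FNE_relax_FNE}, applied to the firmly nonexpansive $F_k$ with parameter $\mu_k$, then shows that $(F_k)_{\mu_k}=T_{k\lambda_k}$ is $(2-\mu_k)\mu_k^{-1}$-firmly nonexpansive. Since $(2-\mu_k)\mu_k^{-1}\ge 0$, dropping the nonnegative subtracted term in the defining inequality of Definition \ref{firm=000020def}(ii), as in Remark \ref{FNE-SQNE}, yields $\Vert T_{k\lambda_k}x-T_{k\lambda_k}y\Vert\le\Vert x-y\Vert$ for all $x,y\in\mathcal{H}$. Hence $T_{k\lambda_k}$ is nonexpansive.

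There is no real obstacle here. The only points needing care are the bookkeeping of composing relaxations, $(U_a)_b=U_{ab}$, and the verification that $\mu_k\le 2$; the latter is exactly where the upper bound $1+\rho-\varepsilon$ on $\lambda_k$ is used. The GMSA structure of $T_k$ plays no role beyond the assumed $\rho$-firm nonexpansiveness.
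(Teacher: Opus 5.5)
Your proposal is correct and follows essentially the same route as the paper. The paper likewise uses Corollary \ref{cor:=000020frm-nexp} to make the $2^{-1}(1+\rho)$-relaxation of $T_k$ firmly nonexpansive, and notes that its $2\lambda_k(1+\rho)^{-1}$-relaxation is $T_{k\lambda_k}$. It then obtains $(1+\rho-\lambda_k)\lambda_k^{-1}$-firm nonexpansiveness via Theorem \ref{FNE_relax_FNE} and concludes with Remark \ref{FNE-SQNE}; your explicit check that $\mu_k<2$ is a detail the paper leaves implicit.
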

\begin{proof}
Let $k\in\mathbb{N}$. By Corollary \ref{cor:=000020frm-nexp}, the
$2^{-1}\left(1+\rho\right)$-relaxation of $T_{k}$ is firmly nonexpansive,
i.e., it is $1$-firmly nonexpansive. Therefore, its $2\lambda_{k}\left(1+\rho\right)^{-1}$-relaxation,
which is $T_{k\lambda_{k}}$, is $\left(1+\rho-\lambda_{k}\right)\lambda_{k}^{-1}$-firmly
nonexpansive, by Theorem \ref{FNE_relax_FNE}. So, by Remark \ref{FNE-SQNE},
it is nonexpansive.
\end{proof}
If we assume that for each $n\in\mathbb{N}$, the input operator $U_{n}$
of the GMSA procedure is $2^{-1}$-firmly nonexpansive (or, equivalently,
$4\cdot3^{-1}$-relaxed firmly nonexpansive by Theorem \ref{FNE_relax_FNE}),
then we can guarantee in Theorem \ref{main_res}\textit{(iii)} that
the output operators $\left\{ T_{k}\right\} _{k=0}^{\infty}$ are
$\rho$-firmly nonexpansive and hence $\rho$-strongly quasi-nonexpansive
for some $\rho\in\left[0,1\right]$. To this end we need the following
auxiliary lemma.
\begin{lem}
\label{lem4}Let $\varepsilon\in\left(0,1\right]$, let $\left\{ U_{n}\right\} _{n=0}^{\infty}$
be a sequence of $2^{-1}$-firmly nonexpansive operators, let $k\in\mathbb{N}$
and let $n\in L_{N_{k}}$. Then the following assertions hold:
\begin{enumerate}
\item If $n\in c_{k}^{-1}\left(\left\{ 0\right\} \right)$ and $U_{-j_{n}^{k}}$
is, in addition, a firmly nonexpansive operator, or if $\alpha_{k}\left(n\right)=1$,
then the intermediate module $V_{n}^{k}$ generated by the GMSA procedure
is $\frac{\varepsilon}{2\Pi_{i=1}^{n}P_{i}^{k}}$-firmly nonexpansive
and hence nonexpansive.
\item If $n\in c_{k}^{-1}\left(\left\{ 1\right\} \right)\cup c_{k}^{-1}\left(\left\{ 2\right\} \right)$,
then the intermediate module $V_{n}^{k}$ generated by the GMSA procedure
is $\frac{\varepsilon}{2\Pi_{i=1}^{n}P_{i}^{k}}$-firmly nonexpansive
and hence nonexpansive.
\end{enumerate}
We use the convention that for a non-positive integer $n$, the empty
product $\Pi_{i=1}^{n}P_{i}^{k}=1$.
\end{lem}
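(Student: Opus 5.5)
We argue by strong induction on $n\in L_{N_k}$, with $k$ fixed, mirroring Lemmata \ref{lem2} and \ref{lem3} but with Corollaries \ref{cor:=000020frm-nexp} and \ref{comp_conv_comb_firm} in place of Theorems \ref{thm:2.1.39} and \ref{2.1.50}. The inductive claim is slightly stronger than the statement: every module $V_{m}^{k}$ with $m\le n$ is $\frac{\varepsilon}{2\Pi_{i=1}^{m}P_{i}^{k}}$-firmly nonexpansive whenever it is a legitimate module in the sense of (i) or (ii), and is at least $2^{-1}\varepsilon$-firmly nonexpansive when $m\le 0$. The base case $m\le0$ is immediate: $V_{m}^{k}=U_{-m}$ is $2^{-1}$-firmly nonexpansive, hence $2^{-1}\varepsilon$-firmly nonexpansive since $\varepsilon\le1$. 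This matches the formula, since the empty product is $1$. Throughout we use monotonicity in the parameter: a $\rho$-firmly nonexpansive operator is $\rho'$-firmly nonexpansive for every $0\le\rho'\le\rho$. Since $P_{i}^{k}\ge1$, lowering the product index from $j$ to $n>j$ only decreases the constant.

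\emph{Relaxation step} ($c_k(n)=0$). Here $V_{n}^{k}=Id+\alpha_{k}(n)(U_{-j_{n}^{k}}-Id)$ with $\alpha:=\alpha_k(n)\in[\varepsilon,2-\varepsilon]$. If $\alpha=1$, then $V_n^k=U_{-j_n^k}$ is $2^{-1}$-firmly nonexpansive, hence $\frac{\varepsilon}{2\Pi P}$-firmly nonexpansive. If instead $U:=U_{-j_n^k}$ is firmly nonexpansive, Theorem \ref{FNE_relax_FNE} shows that $U_\alpha$ is $(2-\alpha)\alpha^{-1}$-firmly nonexpansive. Since $(2-\alpha)/\alpha\ge\varepsilon/(2-\varepsilon)\ge\varepsilon/2$, this gives the required constant. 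This case is the only place where the extra hypothesis in (i) is used.

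\emph{Convex combination and composition} ($c_k(n)\in\{1,2\}$). Each $V_j^k$ with $j\in J_n^k\subset L_{n-1}$ is, by the induction hypothesis and monotonicity, $\frac{\varepsilon}{2\Pi_{i=1}^{n-1}P_i^k}$-firmly nonexpansive. This holds for nonpositive $j$ by the base case, and for positive $j$ the module is legitimate whenever $V_n^k$ is, as in the paper's standing assumptions. Corollary \ref{comp_conv_comb_firm}(i) then shows that the convex combination keeps the minimal constant, so $V_n^k$ is $\frac{\varepsilon}{2\Pi_{i=1}^{n-1}P_i^k}$-firmly nonexpansive, which is at least $\frac{\varepsilon}{2\Pi_{i=1}^{n}P_i^k}$. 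For the composition of $P_n^k$ factors, Corollary \ref{comp_conv_comb_firm}(ii) gives the constant $\frac{\varepsilon}{2\Pi_{i=1}^{n-1}P_i^k}\cdot (P_n^k)^{-1}$, which is exactly the claimed value. In each case nonexpansiveness follows from Remark \ref{FNE-SQNE}.

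The main obstacle is bookkeeping in the induction. Lower-index positive modules used inside $V_n^k$ may themselves be relaxations for which the hypothesis of (i) must hold. We therefore run the induction over the class of modules satisfying the hypotheses, exactly as in Lemma \ref{lem3}, where these are implicitly assumed for all relaxation nodes. We also need the constant for a relaxation node at index $n$ to be checked against $\Pi_{i=1}^{n}P_i^k$, which is harmless because the bound obtained there does not depend on $n$.
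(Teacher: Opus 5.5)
Your proof is correct and follows the paper's own argument: the same induction on $n$ with nonpositive indices as base case, Theorem \ref{FNE_relax_FNE} together with $(2-\alpha)/\alpha\ge\varepsilon/(2-\varepsilon)\ge\varepsilon/2$ at relaxation nodes, and Corollary \ref{comp_conv_comb_firm} at convex-combination and composition nodes, with $P_i^k\ge1$ used to compare the constants. Your remark that the hypothesis of (i) must hold at every relaxation node in the tree makes explicit an assumption the paper's induction uses only implicitly; it is supplied by the standing hypotheses of Corollary \ref{cor:firm_nonex}.
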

\begin{proof}
If $n$ is non-positive then, by \eqref{eq:-3}, $V_{n}^{k}=U_{-n}$
and hence $V_{n}^{k}$ is $2^{-1}$-firmly nonexpansive, which implies
that it is $\frac{\varepsilon}{2\Pi_{i=1}^{n}P_{i}^{k}}$-firmly nonexpansive.
The proof proceeds by induction on the set $\mathbb{N}\backslash\left\{ 0\right\} \cap L_{N_{k}}$.
Let $n$ be a positive integer in $L_{N_{k}}$ and assume that the
statement of the lemma holds for each positive integer $j<n$ in $\mathbb{N}\backslash\left\{ 0\right\} \cap L_{N_{k}}$.
We show that it also holds for $n$ is this case. We have the following
three cases:

\textbf{\textit{Case 1:}}\textit{ }$c_{k}\left(n\right)=0$. By \eqref{eq:-3},
\[
V_{n}^{k}=Id+\alpha_{k}\left(n\right)\left(V_{j_{n}^{k}}^{k}-Id\right)=Id+\alpha_{k}\left(n\right)\left(U_{-j_{n}^{k}}-Id\right),
\]
where $\alpha_{k}\left(n\right)\in\left[\varepsilon,2-\varepsilon\right]$
and $j_{n}^{k}\in J_{n}^{k}\subset L_{0}$. Clearly, $J_{n}^{k}=\left\{ j_{n}^{k}\right\} $
and, by \eqref{eq:},
\[
I_{n}^{k}=\cup_{j\in J_{n}^{k}}I_{j}^{k}=I_{j_{n}^{k}}^{k}=\left\{ -j_{n}^{k}\right\} .
\]
Now, if $U_{-j_{n}^{k}}$ is, additionally, firmly nonexpansive then,
by Theorem \ref{FNE_relax_FNE}, the operator $V_{n}^{k}$ is $\frac{2-\alpha_{k}\left(n\right)}{\alpha_{k}\left(n\right)}$-firmly
nonexpansive. The inequality
\[
\frac{2-\alpha_{k}\left(n\right)}{\alpha_{k}\left(n\right)}\ge\frac{\varepsilon}{2-\varepsilon}>\frac{\varepsilon}{2}\ge\frac{\varepsilon}{2\Pi_{i=1}^{n}P_{i}^{k}}
\]
yields that $V_{n}^{k}$ is $\frac{\varepsilon}{2\Pi_{i=1}^{n}P_{i}^{k}}$-firmly
nonexpansive. If $\alpha_{k}\left(n\right)=1$, then $V_{n}^{k}=U_{-j_{n}^{k}}$
is $2^{-1}$-firmly nonexpansive, which implies that it is $\frac{\varepsilon}{2\Pi_{i=1}^{n}P_{i}^{k}}$-firmly
nonexpansive and hence nonexpansive.

\textbf{\textit{Case 2:}} $c_{k}\left(n\right)=1$. In this case (by
\eqref{eq:-3}) $V_{n}^{k}=\sum_{j\in J_{n}^{k}}\omega_{n}^{k}\left(j\right)V_{j}^{k}$,
where $J_{n}^{k}\subset L_{n-1}$ and the weights $\left\{ \omega_{n}^{k}\left(j\right)\right\} _{j\in J_{n}^{k}}$
are in the interval $\left[\varepsilon,1\right]$. By the the induction
hypothesis and since $U_{n}$ is $2^{-1}$-firmly nonexpansive for
each $n\in\mathbb{N}$, the operator $V_{j}^{k}$ is $\frac{\varepsilon}{2\Pi_{i=1}^{j}P_{i}^{k}}$-firmly
nonexpansive for each $j\in J_{n}^{k}$. By Corollary \ref{comp_conv_comb_firm}\textit{(i)},
$V_{n}^{k}$ is $\min_{j\in J_{n}^{k}}\left\{ \frac{\varepsilon}{2\Pi_{i=1}^{j}P_{i}^{k}}\right\} $-firmly
nonexpansive. Since $\min_{j\in J_{n}^{k}}\left\{ \frac{\varepsilon}{2\Pi_{i=1}^{j}P_{i}^{k}}\right\} \ge\frac{\varepsilon}{2\Pi_{i=0}^{n}P_{i}^{k}}$,
it follows that $V_{n}^{k}$ is $\frac{\varepsilon}{2\Pi_{i=1}^{n}P_{i}^{k}}$-firmly
nonexpansive and hence nonexpansive also in this case.

\textbf{\textit{Case 3:}}\textit{ }$c_{k}\left(n\right)=2$. In this
case (by \eqref{eq:-3}) $V_{n}^{k}=V_{o_{n}^{k}\left(P_{n}^{k}\right)}^{k}\cdots V_{o_{n}^{k}\left(1\right)}^{k}$,
where $o_{n}^{k}$ is an order function having its values in $J_{n}^{k}$.
Similarly to \textbf{Case 2}, by Corollary \ref{comp_conv_comb_firm}\textit{(ii)},
$V_{n}^{k}$ is $\frac{\min_{j\in J_{n}^{k}}\left\{ \frac{\varepsilon}{2\Pi_{i=1}^{j}P_{i}^{k}}\right\} }{P_{n}^{k}}$-firmly
nonexpansive, which implies that it is $\frac{\varepsilon}{2\Pi_{i=1}^{n}P_{i}^{k}}$-firmly
nonexpansive and hence nonexpansive.
\end{proof}
\begin{cor}
\label{cor:firm_nonex}Let $\left\{ U_{n}\right\} _{n=0}^{\infty}$
be a sequence of approximately shrinking and $\gamma_{n}$-relaxed
firmly nonexpansive operators, where $\gamma_{n}\in\left(0,4\cdot3^{-1}\right]$
for each $n\in\mathbb{N}$, such that the family $\left\{ \mathrm{Fix}U_{n}\right\} _{n=0}^{\infty}$
is boundedly regular and $\cap_{n=0}^{\infty}\mathrm{Fix}U_{n}\not=\emptyset$.
Suppose that for each $k\in\mathbb{N}$ and each $n\in c_{k}^{-1}\left(\left\{ 0\right\} \right)$,
in addition, $U_{j_{n}^{k}}$ is firmly nonexpansive, or that $\alpha_{k}\left(n\right)=1$.
Suppose that the sequence $\left\{ N_{k}\right\} _{k=0}^{\infty}$
is bounded. Let $\left\{ T_{k}\right\} _{k=0}^{\infty}$ be a sequence
generated by the GMSA procedure as in \eqref{eq:-3}. Assume that
the sequence $\left\{ I_{N_{k}}^{k}\right\} _{k=0}^{\infty}$ satisfies
Equation \eqref{eq:-16}. Let $x^{0}\in\mathcal{H}$. Then:
\begin{enumerate}
\item If there exists $\rho\in\left[0,1\right]$ such that for each $k\in\mathbb{N}$,
the operator $T_{k}$ is $\rho$-firmly nonexpansive, then the sequence
$\left\{ x^{k}\right\} _{k=0}^{\infty}$ generated by Algorithm \ref{alg=0000201}
with respect to the sequences $\left\{ T_{k}\right\} _{k=0}^{\infty}$
and $\left\{ \lambda_{k}\right\} _{k=0}^{\infty}\subset\left[1+\rho-\varepsilon\right]$
is strongly Fej{\'e}r monotone with respect to $\cap_{n=0}^{\infty}\mathrm{Fix}U_{n}$,
converges in the norm of $\mathcal{H}$ to a point $x\in\cap_{n=0}^{\infty}\mathrm{Fix}U_{n}$
and is bounded perturbations resilient with respect to $\cap_{n=0}^{\infty}\mathrm{Fix}U_{n}$.
\item If the sequence $\left\{ \left\{ P_{n}^{k}\right\} _{n\in\mathbb{N}\backslash\left\{ 0\right\} \cap L_{N_{k}}}\right\} _{k=0}^{\infty}$
is also bounded, then there exists $\rho\in\left[0,1\right]$ such
that the operator $T_{k}$ is $\rho$-firmly nonexpansive for each
$k\in\mathbb{N}$. Consequently, by (i) above, the sequence $\left\{ x^{k}\right\} _{k=0}^{\infty}$
generated by Algorithm \ref{alg=0000201} with respect to the sequences
$\left\{ T_{k}\right\} _{k=0}^{\infty}$ and $\left\{ \lambda_{k}\right\} _{k=0}^{\infty}\subset\left[1+\rho-\varepsilon\right]$
is strongly Fej{\'e}r monotone with respect to $\cap_{n=0}^{\infty}\mathrm{Fix}U_{n}$,
converges in the norm of $\mathcal{H}$ to a point $x\in\cap_{n=0}^{\infty}\mathrm{Fix}U_{n}$
and is bounded perturbations resilient with respect to $\cap_{n=0}^{\infty}\mathrm{Fix}U_{n}$.
\end{enumerate}
\end{cor}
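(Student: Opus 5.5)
The corollary reduces to Theorem \ref{main_res} and Lemmata \ref{outpt_nonexp} and \ref{lem4}. The only preliminary work is translating the relaxed firmly nonexpansive hypothesis on the input operators into the language used there.

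First step: normalize the input operators. By Corollary \ref{cor:=000020frm-nexp} (equivalently Theorem \ref{FNE_relax_FNE}), a $\gamma_n$-relaxed firmly nonexpansive operator with $\gamma_n\in(0,4\cdot3^{-1}]$ is $(2-\gamma_n)\gamma_n^{-1}$-firmly nonexpansive. Since $(2-\gamma_n)\gamma_n^{-1}\ge(2-4/3)(4/3)^{-1}=2^{-1}$, each $U_n$ is $2^{-1}$-firmly nonexpansive. By Remark \ref{FNE-SQNE}, each $U_n$ is then also nonexpansive and $2^{-1}$-strongly quasi-nonexpansive. The hypothesis that $U_{j_n^k}$ is firmly nonexpansive gives, via Theorem \ref{thm:2.2.5}, that it is a cutter. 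So all standing assumptions of Theorem \ref{main_res} on the inputs are satisfied: $2^{-1}$-strong quasi-nonexpansiveness, the cutter or $\alpha_k(n)=1$ alternative, bounded $\{N_k\}$, admissible control \eqref{eq:-16}, bounded regularity, approximate shrinking, and $\cap_n\mathrm{Fix}U_n\neq\emptyset$.

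For (i), a $\rho$-firmly nonexpansive $T_k$ is $\rho$-strongly quasi-nonexpansive by Remark \ref{FNE-SQNE}. Theorem \ref{main_res}(i),(ii) then give strong Fej\'er monotonicity and norm convergence to a point of $\cap_n\mathrm{Fix}U_n$, for $\lambda_k\in[\varepsilon,1+\rho-\varepsilon]$. I read the stated interval $[1+\rho-\varepsilon]$ as this typo-ed range. For bounded perturbation resilience I invoke Lemma \ref{outpt_nonexp}, which says each $\lambda_k$-relaxation $T_{k\lambda_k}$ is nonexpansive. This is exactly the extra hypothesis of Theorem \ref{main_res}(iii), and that part of the theorem completes (i).

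For (ii), I apply Lemma \ref{lem4} with $n=N_k$. The firm nonexpansiveness of $U_{-j_n^k}$ in relaxation nodes is what Lemma \ref{lem4}(i) requires, and composition and averaging nodes are handled by Lemma \ref{lem4}(ii). Together these show that $T_k=V_{N_k}^k$ is $\frac{\varepsilon}{2\Pi_{i=1}^{N_k}P_i^k}$-firmly nonexpansive. Put $K:=\max_kN_k$ and $M:=\max_{k,n}P_n^k$, both finite by the boundedness assumptions. Then $\Pi_{i=1}^{N_k}P_i^k\le M^K$, so every $T_k$ is $\rho$-firmly nonexpansive with $\rho:=\varepsilon/(2M^K)\in(0,1]$, because a $\rho'$-firmly nonexpansive operator with $\rho'\ge\rho$ is also $\rho$-firmly nonexpansive. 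Part (i) then applies.

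The main obstacle is bookkeeping rather than analysis. One point is that the index convention in the statement ($U_{j_n^k}$ versus $U_{-j_n^k}$) must match Lemma \ref{lem4}(i). The other is confirming that $\gamma_n\le4/3$ gives exactly the constant $2^{-1}$ required by Lemma \ref{lem4}. Once these are aligned, every claim is a direct citation.
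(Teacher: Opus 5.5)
Your proposal is correct and follows the paper's own proof essentially step by step. Like the paper, you show each $U_n$ is $2^{-1}$-firmly nonexpansive via Theorem \ref{FNE_relax_FNE}, obtain the cutter alternative from Theorem \ref{thm:2.2.5}, and combine Theorem \ref{main_res} with Lemma \ref{outpt_nonexp} for (i); for (ii) you take $\rho=\varepsilon/(2M^K)$ from Lemma \ref{lem4}. Your handling of the typos is also right: you correctly read $[1+\rho-\varepsilon]$ as $[\varepsilon,1+\rho-\varepsilon]$ and $U_{j_n^k}$ as $U_{-j_n^k}$, and your constant $(2-\gamma_n)\gamma_n^{-1}$ supplies the inverse missing from the paper's text.
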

\begin{proof}
\textit{(i)} By Theorem \ref{FNE_relax_FNE}, for each $n\in\mathbb{N}$,
the operator $U_{n}$ is $\left(2-\gamma_{n}\right)\gamma_{n}$-firmly
nonexpansive and hence $2^{-1}$-firmly nonexpansive. By Remark \ref{FNE-SQNE},
the operator $U_{n}$ is $2^{-1}$-strongly quasi-nonexpansive for
each $n\in\mathbb{N}$ and the operator $T_{k}$ is $\rho$-strongly
quasi nonexpansive for each $k\in\mathbb{N}$. Moreover, by Theorem
\ref{thm:2.2.5}, for each $n\in c_{k}^{-1}\left(\left\{ 0\right\} \right)$,
$U_{j_{n}^{k}}$ is, additionally, a nonexpansive cutter, or $\alpha_{k}\left(n\right)=1$.
The result now follows from Theorem \ref{main_res} along with Lemma
\ref{outpt_nonexp}.

\textit{(ii)} Define $\rho:=\frac{\varepsilon}{2M^{K}}$, where $K:=\max_{k\in\mathbb{N}}N_{k}$
and $M:=\max_{k\in\mathbb{N}}\left\{ P_{n}^{k}\right\} _{n\in\mathbb{N}\backslash\left\{ 0\right\} \cap L_{N_{k}}}$.
By Lemma \ref{lem4}, the operator $T_{k}$ is $\frac{\varepsilon}{2\Pi_{i=1}^{n}P_{i}^{k}}$-firmly
nonexpansive and hence $\rho$-firmly nonexpansive for each $k\in\mathbb{N}$.
\end{proof}
\begin{example}
For each $n\in\mathbb{N}$, let $C_{n}$ be an arbitrary finite-dimensional
vector subspace of $\mathcal{H}$. Then by Proposition \ref{bounded_reg},
the family $\left\{ C_{n}\right\} _{n\in\mathbb{N}}$ is a boundedly
regular family of nonempty, closed and convex subsets of $\mathcal{H}$
with the nonempty intersection which contains $\left\{ 0\right\} $,
since each $C_{n}$ is a finite-dimensional normed linear space and
hence is locally compact. For each $n\in\mathbb{N}$, define $U_{n}:=P_{C_{n}}$.
By Examples \eqref{ex_as} and \eqref{metric=000020projection}, the
operator $U_{n}$ is approximately shrinking and firmly nonexpansive
(that is, $1$-firmly nonexpansive) for each $n\in\mathbb{N}$. We
see that the family $\left\{ \mathrm{Fix}U_{n}\right\} _{n=0}^{\infty}$
is boundedly regular and $\cap_{n=0}^{\infty}\mathrm{Fix}U_{n}\not=\emptyset$.
Choose the sequences $\left\{ N_{k}\right\} _{k=0}^{\infty}$, $\left\{ \left\{ P_{n}^{k}\right\} _{n\in\mathbb{N}\backslash\left\{ 0\right\} \cap L_{N_{k}}}\right\} _{k=0}^{\infty}$
and $\left\{ I_{N_{k}}^{k}\right\} _{k=0}^{\infty}$ and the rest
of parameters of GMSA procedure as in Example \ref{Concrete=000020example}.
Clearly, the sequences $\left\{ N_{k}\right\} _{k=0}^{\infty}$ and
$\left\{ \left\{ P_{n}^{k}\right\} _{n\in\mathbb{N}\backslash\left\{ 0\right\} \cap L_{N_{k}}}\right\} _{k=0}^{\infty}$
are bounded and the sequence $\left\{ I_{N_{k}}^{k}\right\} _{k=0}^{\infty}$
satisfies Equation \eqref{eq:-16}.

These settings fit the framework of Corollary \ref{cor:firm_nonex}\textit{(ii)}
above and provide an example of a strongly convergent and perturbation
resilient method, particularly, in case where $\mathcal{H}$ is an
infinite-dimensional space.
\end{example}

\section{\protect\label{SM}The superiorization methodology}

In many scientific or real-world problems that are modeled as constrained
minimization problems, aiming for the (constrained) optimal solution
may require high prices for the investment of time, energy, and resources.

The Superiorization Methodology (SM) addresses this by introducing
low-cost perturbations into a feasibility-seeking algorithm. The resulting
algorithm retains the convergence to a feasible point, while steering
the iterates to a ``superior'' feasible point. In this context,
a superior feasible point has a reduced (but not necessarily optimal)
value of the associated objective function.

Additional information and references on the superiorization methodology
are available in the papers listed in the bibliographic collection
on the dedicated Webpage \cite{SM-bib-page}. For recent works containing
introductory material on the SM see, for example, \cite{asymmetric-2023},
\cite{erturk-salim-2023}, \cite{humphries-2022} and \cite{Torregrosa-2024}.

We consider in the sequel the following algorithm which provides a
general framework for superiorization methods with negative (sub)gradient
perturbation.
\begin{lyxalgorithm}
\label{alg=0000202}Given $y^{0}\in\mathcal{H}$, $\varepsilon\in\left(0,1\right]$,
$\phi:\mathcal{H}\rightarrow\mathbb{R}$ a convex and continuous function,
a sequence $\left\{ \mathcal{T}_{k}\right\} _{k=0}^{\infty}$ of operators,
a sequence $\left\{ M_{k}\right\} _{k=0}^{\infty}$ of positive integers
and a family of positive real sequences $\left\{ \left\{ \beta_{k,n}\right\} _{n=1}^{M_{k}}\right\} _{k=0}^{\infty}$
such that $\sum_{k=0}^{\infty}\sum_{n=1}^{M_{k}}\beta_{k,n}<\infty$,
the algorithm is defined by the recurrences
\[
y^{k+1}:=\mathcal{T}_{k}\left(y^{k}+\sum_{n=1}^{M_{k}}\beta_{k,n}v^{k,n}\right)
\]
\[
\mathrm{wherein}
\]
\begin{equation}
v^{k,n+1}:=\begin{cases}
-\left\Vert s^{k,n}\right\Vert ^{-1}s^{k,n}, & \mathrm{if}\,\,0\not\in\partial\phi\left(y^{k}+\sum_{i=1}^{n}\beta_{k,i}v^{k,i}\right),\\
0, & \mathrm{if}\,\,0\in\partial\phi\left(y^{k}+\sum_{i=1}^{n}\beta_{k,i}v^{k,i}\right),
\end{cases}\label{eq:-19}
\end{equation}
for each $k\in\mathbb{N}$ and each $n=0,1,\dots M_{k}-1$, where
$s^{k,n}$ is a selection of the subgradient $\partial\phi\left(y^{k}+\sum_{i=1}^{n}\beta_{k,i}v^{k,i}\right)$
(which exists by Theorem \ref{subdiff_ne+b}) for each $k\in\mathbb{N}$
and each $n=0,1,\dots M_{k}-1$ (recalling that, by definition, $\sum_{i=1}^{0}\beta_{k,i}v^{k,i}=0$).
\end{lyxalgorithm}
This formulation of the algorithm generalizes, in more than one way,
\cite[Algorithm 4.1]{CZ_sup} as will be clarified below.

We need the following auxiliary lemma to further investigate the behavior
of Algorithm \ref{alg=0000202}.
\begin{lem}
[{\cite[Lemma 5.3]{GDSA_inconsist}}]\label{aux_lem} Let $\phi:\mathcal{H}\rightarrow\mathbb{R}$
be a convex and continuous real-valued objective function. For an
arbitrary nonempty subset $C$ of $\mathcal{H}$ and $y,z\in C$ such
that $z\in\mathrm{Argmin}_{x\in C}\phi\left(x\right)$ and $y\not\in\mathrm{Argmin}_{x\in C}\phi\left(x\right)$,
there exist real numbers $r_{1}>0$ and $r_{2}>0$ so that for each
$\overline{y}\in B\left(y,r_{1}\right)$ and $v\in\partial\phi\left(\overline{y}\right)$,
the following assertions are satisfied:
\begin{enumerate}
\item $0\not\in\partial\phi\left(\overline{y}\right)$ and for each $\overline{z}\in B\left(z,r_{2}\right)$
\[
\left\langle \left\Vert v\right\Vert ^{-1}v,\overline{z}-\overline{y}\right\rangle <0.
\]
\item We have 
\[
\left\langle \left\Vert v\right\Vert ^{-1}v,z-\overline{y}\right\rangle <-2^{-1}r_{2}.
\]
\item Let $p$ be a nonnegative integer. Assume that $\left\{ \alpha_{n}\right\} _{n=1}^{p}$
is a sequence of positive real numbers such that $\sum_{n=1}^{p}a_{n}<2^{-1}r_{1}$
and $\left\{ v^{n}\right\} _{n=1}^{p}\subset\mathcal{H}\backslash\left\{ 0\right\} $
is a sequence such that $v^{n}\in\partial\phi\left(\overline{y}-\sum_{i=1}^{n-1}\alpha_{i}\left\Vert v^{i}\right\Vert ^{-1}v^{i}\right)$
for each $n=1,2,\dots,p$. If, in addition, $\overline{y}\in B\left(y,2^{-1}r_{1}\right)$,
then
\[
\left\Vert \overline{y}-\sum_{n=1}^{p}\alpha_{n}\left\Vert v^{n}\right\Vert ^{-1}v^{n}-z\right\Vert ^{2}\le\left\Vert \overline{y}-z\right\Vert ^{2}-\sum_{n=1}^{p}\left(r_{2}-\alpha_{n}\right)\alpha_{n}
\]
(by definition $\sum_{n=1}^{0}\alpha_{n}\left\Vert v^{n}\right\Vert ^{-1}v^{n}:=\sum_{n=1}^{0}\left(r_{2}-\alpha_{n}\right)\alpha_{n}:=0)$.
\end{enumerate}
\end{lem}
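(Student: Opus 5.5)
The plan is to derive everything from the single quantity $\delta:=\phi(y)-\phi(z)$, combined with continuity of $\phi$ and the subgradient inequality. Since $y\in C$ is not a minimizer of $\phi$ over $C$ while $z$ is, we have $\delta>0$. By continuity of $\phi$ at $y$ and at $z$, I choose $r_{1}>0$ and $r_{2}>0$ so that
\[
\phi(\overline{y})>\phi(z)+\tfrac{3}{4}\delta\ \text{ for all }\overline{y}\in B(y,r_{1}),\qquad \phi(\overline{z})<\phi(z)+\tfrac{1}{4}\delta\ \text{ for all }\overline{z}\in B(z,r_{2}).
\]
These radii depend only on $y$ and $z$, not on $\overline{y}$, $\overline{z}$ or $v$. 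This uniformity is what makes assertion (iii) work.

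For (i), fix $\overline{y}\in B(y,r_{1})$ and $v\in\partial\phi(\overline{y})$. If $0\in\partial\phi(\overline{y})$, then $\overline{y}$ would be a global minimizer of $\phi$. That contradicts $\phi(\overline{y})>\phi(z)+\frac{3}{4}\delta>\phi(z)$, so $v\neq0$. For $\overline{z}\in B(z,r_{2})$, the subgradient inequality gives
\[
\langle v,\overline{z}-\overline{y}\rangle\le\phi(\overline{z})-\phi(\overline{y})<-\tfrac{1}{2}\delta<0.
\]
Dividing by $\|v\|$ yields (i). For (ii), set $u:=\|v\|^{-1}v$ and apply (i) to $\overline{z}:=z+tu$ with $t\in(0,r_{2})$, which lies in $B(z,r_{2})$. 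This gives $\langle u,z-\overline{y}\rangle<-t$. Letting $t\uparrow r_{2}$ gives $\langle u,z-\overline{y}\rangle\le-r_{2}<-\frac{1}{2}r_{2}$.

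For (iii), write $u^{n}:=\|v^{n}\|^{-1}v^{n}$ and $w_{n}:=\overline{y}-\sum_{i=1}^{n}\alpha_{i}u^{i}$, with $w_{0}=\overline{y}$. Since $\overline{y}\in B(y,\frac{1}{2}r_{1})$ and $\sum_{i}\alpha_{i}<\frac{1}{2}r_{1}$, every $w_{n-1}$ lies in $B(y,r_{1})$. Because $v^{n}\in\partial\phi(w_{n-1})$, assertion (ii) applies at the point $w_{n-1}$ and gives $\langle u^{n},w_{n-1}-z\rangle>\frac{1}{2}r_{2}$. Expanding the square,
\[
\|w_{n}-z\|^{2}=\|w_{n-1}-z\|^{2}-2\alpha_{n}\langle u^{n},w_{n-1}-z\rangle+\alpha_{n}^{2}\le\|w_{n-1}-z\|^{2}-(r_{2}-\alpha_{n})\alpha_{n}.
\]
Summing over $n=1,\dots,p$ telescopes to the claimed inequality; the case $p=0$ is trivial.

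The only delicate point is keeping all intermediate points $w_{n-1}$ inside $B(y,r_{1})$, so that (ii) holds with the same $r_{2}$ at each step. This is exactly what the halved radius and the bound $\sum\alpha_{n}<\frac{1}{2}r_{1}$ guarantee.
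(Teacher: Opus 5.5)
Your proof is correct and follows the standard argument for this lemma, which the paper cites from \cite{GDSA_inconsist} without reproving it: continuity of $\phi$ separates its values near $y$ from those near $z$, the subgradient inequality gives (i), testing (i) at $z+t\|v\|^{-1}v$ gives (ii), and telescoping the expanded squares gives (iii), with the halved radius keeping every intermediate point in $B(y,r_{1})$. The only cosmetic difference is in (ii): you could take $t=2^{-1}r_{2}$ directly to get the strict bound $<-2^{-1}r_{2}$, instead of passing to the limit $t\uparrow r_{2}$.
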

The next theorem is ``a theorem of alternatives'' for the possible
outputs of Algorithm \ref{alg=0000202}. It generalizes Theorem 4.1
in \cite{CZ_sup} and Theorem 5.4 in \cite{GDSA_inconsist} to the
setting of Algorithm \ref{alg=0000202} which we apply in the next
section to the ``superiorized version of Algorithm \ref{alg=0000201}''.
\begin{thm}
\label{Strict=000020Fejer} Let $\phi:\mathcal{H}\rightarrow\mathbb{R}$
be a convex and continuous real valued objective function. Assume
that $\left\{ \mathcal{T}_{k}\right\} _{k=0}^{\infty}$ is a sequence
of nonexpansive operators having a common fixed point, where $\varepsilon\in\left(0,1\right]$.
Let $y^{0}\in\mathcal{H}$ and suppose that the sequence $\left\{ y^{k}\right\} _{k=0}^{\infty}$,
generated by Algorithm \ref{alg=0000202}, converges in the norm of
$\mathcal{H}$ to a point $y\in\cap_{k=0}^{\infty}\mathrm{Fix}\mathcal{T}_{k}$.
Then exactly one of the following two alternatives holds:
\begin{enumerate}
\item $y\in\mathrm{Argmin}_{x\in\cap_{k=0}^{\infty}\mathrm{Fix}\mathcal{T}_{k}}\phi\left(x\right)$.

\suspend{enumerate} ~~or \resume{enumerate}
\item $y\notin\mathrm{Argmin}_{x\in\cap_{k=0}^{\infty}\mathrm{Fix}\mathcal{T}_{k}}\phi\left(x\right)$
and there exists $k_{0}\in\mathbb{N}$ such that $\left\{ y^{k}\right\} _{k=k_{0}}^{\infty}$
is strictly Fej{\'e}r monotone with respect to $\mathrm{Argmin}_{x\in\cap_{k=0}^{\infty}\mathrm{Fix}\mathcal{T}_{k}}\phi\left(x\right)$,
that is, $\left\Vert y^{k+1}-z\right\Vert ^{2}<\left\Vert y^{k}-z\right\Vert ^{2}$
for every $z\in\mathrm{Argmin}_{x\in\cap_{n=0}^{\infty}\mathrm{Fix}U_{n}}\phi\left(x\right)$
and for all natural $k\ge k_{0}$.
\end{enumerate}
\end{thm}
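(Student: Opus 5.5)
Write $C:=\cap_{k=0}^{\infty}\mathrm{Fix}\mathcal{T}_{k}$ and $\Omega:=\mathrm{Argmin}_{x\in C}\phi(x)$. The two alternatives are mutually exclusive by their first clauses, so it suffices to assume that (i) fails and establish the strict Fej\'er monotonicity in (ii). If $\Omega=\emptyset$ the monotonicity claim is vacuous, so we may fix $z\in\Omega$. Then $y\in C$ and $y\notin\Omega$, which is exactly the setting of Lemma \ref{aux_lem} with $C$ playing the role of the arbitrary set. Because the Lemma's radii depend on $z$, and we want a single $k_{0}$ valid for every $z\in\Omega$, the plan is to fix one reference minimizer $z_{0}\in\Omega$ and apply Lemma \ref{aux_lem} to the pair $(y,z_{0})$ to obtain $r_{1},r_{2}>0$.

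\textbf{Choosing $k_{0}$.} Since $y^{k}\rightarrow y$ and $\sum_{k}\sum_{n}\beta_{k,n}<\infty$, there is $k_{0}$ such that for all $k\ge k_{0}$ we have $y^{k}\in B(y,2^{-1}r_{1})$, $\sum_{n=1}^{M_{k}}\beta_{k,n}<2^{-1}r_{1}$, and $\beta_{k,n}<r_{2}$ for every $n$. Every intermediate point $y^{k}+\sum_{i=1}^{n}\beta_{k,i}v^{k,i}$ then lies in $B(y,r_{1})$, because $\Vert v^{k,i}\Vert\le1$. By Lemma \ref{aux_lem}(i), $0\notin\partial\phi$ at these points, so the nonzero branch of \eqref{eq:-19} is always taken and each $v^{k,n}$ is a normalized negative subgradient.

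\textbf{The descent step.} Lemma \ref{aux_lem}(iii) then gives
\[
\Bigl\Vert y^{k}+\sum_{n=1}^{M_{k}}\beta_{k,n}v^{k,n}-z_{0}\Bigr\Vert^{2}\le\Vert y^{k}-z_{0}\Vert^{2}-\sum_{n=1}^{M_{k}}(r_{2}-\beta_{k,n})\beta_{k,n}<\Vert y^{k}-z_{0}\Vert^{2}.
\]
The inequality is strict because $M_{k}\ge1$ and every $\beta_{k,n}$ is positive. Since $z_{0}\in C\subseteq\mathrm{Fix}\mathcal{T}_{k}$ and $\mathcal{T}_{k}$ is nonexpansive,
\[
\Vert y^{k+1}-z_{0}\Vert\le\Bigl\Vert y^{k}+\sum_{n}\beta_{k,n}v^{k,n}-z_{0}\Bigr\Vert<\Vert y^{k}-z_{0}\Vert .
\]

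\textbf{Main obstacle: passing to every $z\in\Omega$.} The key point is that step (iii) of Lemma \ref{aux_lem} rests on the inequality $\langle\Vert v\Vert^{-1}v,z-\overline{y}\rangle<-2^{-1}r_{2}$ from part (ii). For a different minimizer $z\in\Omega$, I would repeat the expansion
\[
\Vert\overline{y}-\beta u-z\Vert^{2}=\Vert\overline{y}-z\Vert^{2}+2\beta\langle u,z-\overline{y}\rangle+\beta^{2},\qquad u=\Vert v\Vert^{-1}v .
\]
To control the middle term, use the subgradient inequality $\langle v,z-\overline{y}\rangle\le\phi(z)-\phi(\overline{y})$, together with $\phi(z)=\phi(z_{0})<\phi(y)$ and the continuity of $\phi$ at $y$. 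Near $y$ these give $\phi(\overline{y})\ge\phi(z_{0})+\delta$ for some $\delta>0$, so $\langle u,z-\overline{y}\rangle\le-\delta/\Vert v\Vert$. This bound is uniform in $z$, since every $z\in\Omega$ has the same value $\phi(z)$; the only step that must be justified separately is that $\Vert v\Vert$ stays bounded near $y$. That holds because the subdifferential of a convex function continuous at $y$ is locally bounded. With this uniform estimate the descent argument above goes through for each $z\in\Omega$ with a single $k_{0}$, after shrinking $r_{1}$ and the bound on $\beta_{k,n}$ if necessary. This yields $\Vert y^{k+1}-z\Vert^{2}<\Vert y^{k}-z\Vert^{2}$ for all $z\in\Omega$ and all $k\ge k_{0}$.
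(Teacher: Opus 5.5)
Your proposal is correct. Its skeleton matches the paper's proof: a neighborhood of $y$ on which $0\notin\partial\phi$, a $k_0$ chosen from $y^k\to y$ and the summability of $\sum_k\sum_n\beta_{k,n}$, the inner-loop descent estimate, and finally nonexpansiveness of $\mathcal{T}_k$ together with $z\in\mathrm{Fix}\mathcal{T}_k$.

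The genuine difference is the step you flagged as the main obstacle. The paper fixes one $z\in\mathrm{Argmin}$ first, applies Lemma \ref{aux_lem} to the pair $(y,z)$, and chooses $k_0$ from the resulting $r_1,r_2$. As written, its $k_0$ therefore depends on $z$. That establishes only the weaker form ``for each minimizer $z$ there is $k_0(z)$'', whereas the theorem asserts a single $k_0$ serving every minimizer.

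Your route closes this. You combine three facts:
\begin{itemize}
\item $\phi(\overline{y})\ge\min_C\phi+\delta$ near $y$, by continuity of $\phi$ at $y$;
\item the subgradient inequality, using that all $z\in\Omega$ share the value $\min_C\phi$;
\item a local bound $\Vert v\Vert\le L$ on $\partial\phi$ near $y$, since continuous convex functions are locally Lipschitz.
\end{itemize}
Together these give $\langle u,z-\overline{y}\rangle\le-\delta/L$ for all $z\in\Omega$ simultaneously, and hence a $z$-independent $k_0$. The price is this extra standard fact about local boundedness of the subdifferential, which the paper avoids by treating Lemma \ref{aux_lem} as a black box.

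Once you have the uniform estimate, your first stage (the reference minimizer $z_0$ and Lemma \ref{aux_lem}) is superfluous. You can argue directly:
\begin{itemize}
\item take $\eta>0$ with $\phi(\overline{y})\ge\min_C\phi+\delta$ and $\Vert v\Vert\le L$ for all $\overline{y}\in B(y,\eta)$ and $v\in\partial\phi(\overline{y})$;
\item choose $k_0$ so that, for $k\ge k_0$, $y^k\in B(y,\eta/2)$ and $\sum_n\beta_{k,n}<\min\{\eta/2,\,2\delta/L\}$;
\item then every intermediate point stays in $B(y,\eta)$, and each inner step lowers $\Vert\cdot-z\Vert^2$ by at least $\beta_{k,n}(2\delta/L-\beta_{k,n})>0$, uniformly in $z\in\Omega$.
\end{itemize}
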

\begin{proof}
Assume that $\left\{ y^{k}\right\} _{k=0}^{\infty}$ converges in
the norm of $\mathcal{H}$ to a point $y\not\in\mathrm{Argmin}_{x\in\cap_{k=0}^{\infty}\mathrm{Fix}\mathcal{T}_{k}}\phi\left(x\right)$
and that $z\in\mathrm{Argmin}_{x\in\cap_{k=0}^{\infty}\mathrm{Fix}\mathcal{T}_{k}}\phi\left(x\right)$.
By Lemma \ref{aux_lem}, since $y\in\cap_{k=0}^{\infty}\mathrm{Fix}\mathcal{T}_{k}$,
there exist real numbers $r_{1}>0$ and $r_{2}>0$ such that each
$\overline{y}\in B\left(y,r_{1}\right)$ and $v\in\partial\phi\left(\overline{y}\right)$
satisfy its assertions. By using the strong convergence of $\left\{ y^{k}\right\} _{k=0}^{\infty}$
to $y$ and the convergence of the series $\sum_{k=0}^{\infty}\sum_{n=1}^{M_{k}}\beta_{k,n}$,
choose $k_{0}\in\mathbb{N}$ such that 
\begin{equation}
y^{k}\in B\left(y,2^{-1}r_{1}\right)\label{eq:-30}
\end{equation}
 and 
\begin{equation}
\sum_{n=1}^{M_{k}}\beta_{k,n}<\min\left\{ 2^{-1}r_{1},r_{2}\right\} \label{eq:-14-1}
\end{equation}
for each integer $k\ge k_{0}$. This yields, for each $k\ge k_{0}$,
\[
y^{k}+\sum_{i=1}^{n-1}\beta_{k,i}v^{k,i}\in B\left(y,r_{1}\right)
\]
for each $n=1,2,\dots,M_{k}$, and, consequently, by Lemma \ref{aux_lem}\textit{(i)},
\begin{equation}
0\not\in\partial\phi\left(y^{k}+\sum_{i=1}^{n-1}\beta_{k,i}v^{k,i}\right)\label{eq:-17}
\end{equation}
for each $n=1,2,\dots,M_{k}$. Let $k\ge k_{0}$ be an integer. By
\eqref{eq:-19} and \eqref{eq:-17}, 
\begin{equation}
v^{k,n}=-\left\Vert s^{k,n-1}\right\Vert ^{-1}s^{k,n-1},\label{eq:-29}
\end{equation}
where 
\begin{equation}
s^{k,n-1}\in\partial\phi\left(y^{k}+\sum_{i=1}^{n-1}\beta_{k,i}v^{k,i}\right),\label{eq:-26}
\end{equation}
for each $n=1,2,\dots,M_{k}$. Define $p:=M_{k}$ and $\overline{y}:=y^{k}$.
For each $n=1,2,\dots,p$, define $\alpha_{n}:=\beta_{k,n}>0$ and
$v^{n}:=s^{k,n-1}$. Then, by \eqref{eq:-30}, \eqref{eq:-14-1},
\eqref{eq:-17}, \eqref{eq:-29} and \eqref{eq:-26}, $\overline{y}\in B\left(y,2^{-1}r_{1}\right)$,
$\sum_{n=1}^{p}\alpha_{n}<2^{-1}r_{1}$, $\left\{ v^{n}\right\} _{n=1}^{p}\subset\mathcal{H}\backslash\left\{ 0\right\} $
and $v^{n}\in\partial\phi\left(\overline{y}-\sum_{i=1}^{n-1}\alpha_{i}\left\Vert v^{i}\right\Vert ^{-1}v^{i}\right)$
for each $n=1,2,\dots,p$.

Since the operator $\mathcal{T}_{k}$ is nonexpansive, we obtain from
Lemma \ref{aux_lem}\textit{(iii),} that
\begin{align*}
\left\Vert y^{k+1}-z\right\Vert ^{2} & =\left\Vert \mathcal{T}_{k}\left(y^{k}+\sum_{n=1}^{M_{k}}\beta_{k,n}v^{k,n}\right)-z\right\Vert ^{2}\le\left\Vert y^{k}+\sum_{n=1}^{M_{k}}\beta_{k,n}v^{k,n}-z\right\Vert ^{2}\\
 & =\left\Vert \overline{y}-\sum_{n=1}^{p}\alpha_{n}\left\Vert v_{n}\right\Vert ^{-1}v_{n}-z\right\Vert ^{2}\le\left\Vert \overline{y}-z\right\Vert ^{2}-\sum_{n=1}^{p}\left(r_{2}-\alpha_{n}\right)\alpha_{n}\\
 & =\left\Vert y^{k}-z\right\Vert ^{2}-\sum_{n=1}^{M_{k}}\left(r_{2}-\beta_{k,n}\right)\beta_{k,n}.
\end{align*}
Since by \eqref{eq:-14-1}, $\sum_{n=1}^{M_{k}}\beta_{k,n}<r_{2}$,
we see that $\left\Vert y^{k+1}-z\right\Vert <\left\Vert y^{k+1}-z\right\Vert $
and the proof of the theorem is complete.
\end{proof}
\begin{rem}
The fundamental Algorithm \ref{alg=0000202} is a ``superiorization
algorithm'' meaning that it is not designed for reaching a constrained
optimal point. Its intentional aim is ``to do less than constrained
optimization'', namely, to enable to ``improve'' the feasibility-seeking
algorithm by steering it to an asymptotic feasible point whose objective
function value is smaller or equal to that of an asymptotic feasible
point that would have been reached by the same feasibility-seeking
algorithm under exact equal conditions without the perturbations.
The theorem-of-alternatives (Theorem \ref{Strict=000020Fejer}) is
the best result that we can establish at this time. It does not rule
out the possibility that Algorithm \ref{Strict=000020Fejer} will
reach a constrained optimal point (see Remark \ref{rem:ex} in the
next section in this connection) but tells that if this does not happen
then the generated sequence must be strictly Fej{\'e}r monotone.
It is probably possible to characterize when the superiorization algorithm
can solve a constrained optimization problem but this will come at
an unavoidable cost of imposing additional conditions. Such results
may be derived from, or be developed further from, the work done in
\cite{Combettes2000} and \cite{Combettes2024}. The leading motivation
for splitting methods, as described in \cite{Combettes2024}, is to
break a complex task to more manageable sub-tasks. In this sense the
superiorization can be viewed also as a splitting method. Its ties
to the broad literature on splitting methods have not yet been explored.
\end{rem}

\section{\protect\label{Apps}Applications to superiorization and Dynamic
String Averaging}

This section presents some consequences of the strong convergence
properties of our GMSA procedure for iterative methods based on the
GMSA procedure, and for the superiorization methodology, as developed
in the preceding sections.

We start by observing that under the assumptions of Theorem \ref{main_res}\textit{(iii)},
where, in particular, $\left\{ \lambda_{k}\right\} _{k=0}^{\infty}\subset\left[\varepsilon,1+\rho-\varepsilon\right]$
for $\varepsilon\in\left(0,1\right]$ and $\rho\in\left(0,1\right]$,
the sequence $\left\{ y^{k}\right\} _{k=0}^{\infty}$, generated by
Algorithm \ref{alg=0000202} with respect to the sequences $\left\{ \lambda_{k}\right\} _{k=0}^{\infty}$
and $\left\{ T_{k\lambda_{k}}\right\} _{k=0}^{\infty}$ (the $\lambda_{k}$-relaxations
of operators $\left\{ T_{k}\right\} _{k=0}^{\infty}$ generated by
the GMSA procedure obtained from the sequence of given input $2^{-1}$-strongly
quasi-nonexpansive operators $\left\{ U_{n}\right\} _{n=0}^{\infty}$
with nonempty intersection) converges in the norm of $\mathcal{H}$
to a point $y\in\cap_{n=0}^{\infty}\mathrm{Fix}U_{n}$.

Indeed, define a sequence $\left\{ \beta_{k}\right\} _{k=0}^{\infty}$
of positive real numbers by $0<\beta_{k}=\sum_{n=1}^{M_{k}}\beta_{k,n}$
for each $k\in\mathbb{N}$. Then $\sum_{k=0}^{\infty}\beta_{k}<\infty$
and we have, by defining the operators involved in Algorithm \ref{alg=0000202}
as $\mathcal{T}_{k}:=T_{k\lambda_{k}}$ for each $k\in\mathbb{N}$,
\begin{align}
y^{k+1} & =T_{k\lambda_{k}}\left(y^{k}+\sum_{n=1}^{M_{k}}\beta_{k,n}v^{k,n}\right)=T_{k\lambda_{k}}\left(y^{k}+\beta_{k}\sum_{n=1}^{M_{k}}\beta_{k,n}\beta_{k}^{-1}v^{k,n}\right),\label{eq:-7-1-2}
\end{align}
for each $k\in\mathbb{N}$. It follows, by the triangle inequality
and by \eqref{eq:-19}, that
\[
\left\Vert \sum_{n=1}^{M_{k}}\beta_{k,n}\beta_{k}^{-1}v^{k,n}\right\Vert \le\sum_{n=1}^{M_{k}}\beta_{k,n}\beta_{k}^{-1}=1,
\]
i.e., the real sequence $\left\{ \sum_{n=1}^{M_{k}}\beta_{k,n}\beta_{k}^{-1}v^{k,n}\right\} _{k=0}^{\infty}$
is bounded by 1 in $\mathcal{H}$. By Remark \ref{Relaxation=000020has=000020the=000020same=000020Fix}
and Lemma \ref{lem3}, we obtain in this case that 
\begin{equation}
\cap_{k=0}^{\infty}\mathrm{Fix}T_{k\lambda_{k}}=\cap_{k=0}^{\infty}\mathrm{Fix}T_{k}=\cap_{k=0}^{\infty}\cap_{n\in I_{N_{k}}^{k}}U_{n}=\cap_{n=0}^{\infty}U_{n}.\label{eq:-15}
\end{equation}
 By using \eqref{eq:-7-1-2}, Corollary \ref{cor:firm_nonex}, Lemma
\ref{outpt_nonexp}, Theorem \ref{Strict=000020Fejer} and \eqref{eq:-15},
we obtain the following corollary.
\begin{cor}
\label{SM_cor-1}Let $\phi:\mathcal{H}\rightarrow\mathbb{R}$ be a
convex and continuous real-valued objective function and let $\left\{ U_{n}\right\} _{n=0}^{\infty}$
be a sequence of approximately shrinking and $\gamma_{n}$-relaxed
firmly nonexpansive operators, where $\gamma_{n}\in\left(0,4\cdot3^{-1}\right]$
for each $n\in\mathbb{N}$, such that the family $\left\{ \mathrm{Fix}U_{n}\right\} _{n=0}^{\infty}$
is boundedly regular and $\cap_{n=0}^{\infty}\mathrm{Fix}U_{n}\not=\emptyset$.
Suppose that the sequence $\left\{ N_{k}\right\} _{k=0}^{\infty}$
is bounded and that for each $k\in\mathbb{N}$ and each $n\in c_{k}^{-1}\left(\left\{ 0\right\} \right)$,
in addition, the operator $U_{j_{n}^{k}}$ is firmly nonexpansive,
or that $\alpha_{k}\left(n\right)=1$. Let $\left\{ T_{k}\right\} _{k=0}^{\infty}$
be a sequence of operators generated by the GMSA procedure as in \eqref{eq:-3},
and assume that there exists $\rho\in\left[0,1\right]$ such that
the operators $T_{k}$ are $\rho$-firmly nonexpansive for all $k\in\mathbb{N}$
(by Corollary \ref{cor:firm_nonex}(ii), this holds in particular
when the sequence $\left\{ \left\{ P_{n}^{k}\right\} _{n\in\mathbb{N}\backslash\left\{ 0\right\} \cap L_{N_{k}}}\right\} _{k=0}^{\infty}$
is also bounded). Let $\varepsilon\in\left(0,1\right]$ and let $\left\{ \lambda_{k}\right\} _{k=0}^{\infty}\subset\left[1+\rho-\varepsilon\right]$
be a sequence. Assume that the sequence $\left\{ I_{N_{k}}^{k}\right\} _{k=0}^{\infty}$
satisfies Equation \eqref{eq:-16}. Then the sequence $\left\{ y^{k}\right\} _{k=0}^{\infty}$,
generated by Algorithm \ref{alg=0000202} with respect to the sequence
$\left\{ \lambda_{k}\right\} _{k=0}^{\infty}$ and $\left\{ T_{k\lambda_{k}}\right\} _{k=0}^{\infty}$
(the $\lambda_{k}$-relaxations of operators $\left\{ T_{k}\right\} _{k=0}^{\infty}$)
converges in the norm of $\mathcal{H}$ to a point $y\in\cap_{n=0}^{\infty}\mathrm{Fix}U_{n}$
and exactly one of the following two alternatives holds:
\begin{enumerate}
\item $y\in\mathrm{Argmin}_{x\in\cap_{n=0}^{\infty}\mathrm{Fix}U_{n}}\phi\left(x\right)$

\suspend{enumerate} ~~or \resume{enumerate}
\item $y\not\in\mathrm{Argmin}_{x\in\cap_{n=0}^{\infty}\mathrm{Fix}U_{n}}\phi\left(x\right)$
and there exists $k_{0}\in\mathbb{N}$ such that $\left\{ y^{k}\right\} _{k=k_{0}}^{\infty}$
is strictly Fej{\'e}r monotone with respect to $\mathrm{Argmin}_{x\in\cap_{n=0}^{\infty}\mathrm{Fix}U_{n}}\phi\left(x\right)$,
that is, $\left\Vert y^{k+1}-z\right\Vert ^{2}<\left\Vert y^{k}-z\right\Vert ^{2}$
for every $z\in\mathrm{Argmin}_{x\in\cap_{n=0}^{\infty}\mathrm{Fix}U_{n}}\phi\left(x\right)$
and for all natural $k\ge k_{0}$.
\end{enumerate}
\end{cor}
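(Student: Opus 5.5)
The plan is to treat Corollary \ref{SM_cor-1} as a combination of two earlier results. First, strong convergence of the perturbed iterates to a common fixed point will follow from the bounded perturbation resilience in Corollary \ref{cor:firm_nonex}. Then the dichotomy will follow from the theorem of alternatives, Theorem \ref{Strict=000020Fejer}, applied with $\mathcal{T}_{k}:=T_{k\lambda_{k}}$.

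\textbf{Step 1: the hypotheses of Corollary \ref{cor:firm_nonex}(i) hold.} The standing assumptions of the present corollary are exactly those of Corollary \ref{cor:firm_nonex}:
\begin{itemize}
\item approximately shrinking, $\gamma_{n}$-relaxed firmly nonexpansive input operators with $\gamma_{n}\in\left(0,4\cdot3^{-1}\right]$;
\item bounded regularity of $\left\{ \mathrm{Fix}U_{n}\right\} _{n=0}^{\infty}$ and $\cap_{n=0}^{\infty}\mathrm{Fix}U_{n}\not=\emptyset$;
\item boundedness of $\left\{ N_{k}\right\} _{k=0}^{\infty}$;
\item the cutter-or-unit-relaxation condition at the relaxation nodes;
\item the admissible control \eqref{eq:-16};
\item $\rho$-firm nonexpansiveness of every $T_{k}$.
\end{itemize}
If instead only the boundedness of the $P_{n}^{k}$ is given, Corollary \ref{cor:firm_nonex}(ii) supplies such a $\rho$. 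Corollary \ref{cor:firm_nonex}(i) then says that Algorithm \ref{alg=0000201} with $\left\{ T_{k}\right\} $ and $\left\{ \lambda_{k}\right\} $ converges strongly for every starting point and is bounded perturbations resilient with respect to $C:=\cap_{n=0}^{\infty}\mathrm{Fix}U_{n}$. In addition, Lemma \ref{outpt_nonexp} gives that each $T_{k\lambda_{k}}$ is nonexpansive.

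\textbf{Step 2: Algorithm \ref{alg=0000202} is a bounded perturbation of Algorithm \ref{alg=0000201}.} Set $\beta_{k}:=\sum_{n=1}^{M_{k}}\beta_{k,n}$ and $v^{k}:=\beta_{k}^{-1}\sum_{n=1}^{M_{k}}\beta_{k,n}v^{k,n}$. By \eqref{eq:-7-1-2}, the iteration reads $y^{k+1}=T_{k\lambda_{k}}(y^{k}+\beta_{k}v^{k})$. Moreover $\sum_{k}\beta_{k}<\infty$, and $\Vert v^{k}\Vert\le1$ because every $v^{k,n}$ has norm at most $1$ by \eqref{eq:-19}.

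Positivity of $\beta_{k}$ is the one point to check, since the definition of bounded perturbations requires positive $\beta_{k}$. It holds because each $\beta_{k,n}>0$. Note also that $v^{k}$ may vanish, which the definition permits.

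Resilience therefore yields $y^{k}\rightarrow y\in C$ in norm.

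\textbf{Step 3: apply the theorem of alternatives.} By \eqref{eq:-15}, $\cap_{k}\mathrm{Fix}T_{k\lambda_{k}}=C$, so the $\mathcal{T}_{k}$ share a common fixed point. Since the $\mathcal{T}_{k}$ are nonexpansive and $y^{k}\rightarrow y\in\cap_{k}\mathrm{Fix}\mathcal{T}_{k}$, Theorem \ref{Strict=000020Fejer} applies. Rewriting $\mathrm{Argmin}_{x\in\cap_{k}\mathrm{Fix}\mathcal{T}_{k}}\phi$ as $\mathrm{Argmin}_{x\in C}\phi$ via \eqref{eq:-15} gives exactly alternatives (i) and (ii).

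\textbf{Main obstacle: the identity \eqref{eq:-15}.} The inclusion $\cap_{k}\cap_{n\in I_{N_{k}}^{k}}\mathrm{Fix}U_{n}\supseteq C$ is trivial. The reverse inclusion uses the admissible control, under which every $n$ lies in some $I_{N_{k}}^{k}$. The identity $\mathrm{Fix}T_{k}=\cap_{i\in I_{N_{k}}^{k}}\mathrm{Fix}U_{i}$ comes from Lemma \ref{lem3}, whose hypothesis of nonempty partial intersections holds because $C\not=\emptyset$. Passing from $T_{k}$ to its relaxation uses Remark \ref{Relaxation=000020has=000020the=000020same=000020Fix}, valid since $\lambda_{k}>0$. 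Everything else is a direct invocation of the earlier results.
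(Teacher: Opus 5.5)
Your proposal is correct and follows the paper's own argument. You rewrite Algorithm \ref{alg=0000202} with $\mathcal{T}_{k}:=T_{k\lambda_{k}}$ as a bounded perturbation of Algorithm \ref{alg=0000201} via \eqref{eq:-7-1-2}, obtain strong convergence from the resilience in Corollary \ref{cor:firm_nonex} and nonexpansiveness from Lemma \ref{outpt_nonexp}, and then get the dichotomy from Theorem \ref{Strict=000020Fejer} together with \eqref{eq:-15}. The one step you leave implicit is that $\gamma_{n}\le4\cdot3^{-1}$ makes each $U_{n}$ $2^{-1}$-firmly nonexpansive, hence $2^{-1}$-strongly quasi-nonexpansive as Lemma \ref{lem3} requires; the paper records this in the proof of Corollary \ref{cor:firm_nonex}(i).
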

\begin{rem}
\label{rem:ex}Alternative 2 in Corollary \ref{SM_cor-1} is not merely
theoretical. Using the framework of Example \ref{MSA}, we can embed
concrete examples given, for instance, in \cite{neg_condit_2025}
into the setting of this alternative.
\end{rem}
A strongly convergent Dynamic String-Averaging Projection (DSAP) algorithm,
based on convex combinations and compositions of operators, was introduced,
along with its bounded perturbation resilience, by Censor and Zaslavski
in \cite{DSAP} for a family of nonempty, closed and convex sets $\left\{ C_{i}\right\} _{i=1}^{m}$,
where the considered operators were given by the metric projections
$\left\{ P_{C_{i}}\right\} _{i=1}^{m}$ in the consistent case, that
is when $\cap_{i=1}^{m}C_{i}\not=\emptyset$. A superiorized version
of this algorithm appears in \cite{CZ_sup}, where its properties
were studied.

String-averaging projection (SAP) methods form a general algorithmic
framework introduced in \cite{CEH2001}. Subsequently, they were developed
in a variety of situations such as for convex feasibility with infinitely
many sets \cite{Kong2019}, for incremental stochastic subgradient
algorithms \cite{costa} and for proton computed tomography image
reconstruction \cite{Barsik}, to name but a few. See also \cite{Bargetz2018},
where perturbation resilience of such methods was further studied.

In the recent paper by Barshad and Censor \cite{GDSA_inconsist} the
General Dynamic String-Averaging (GDSA) iterative scheme was presented
covering the case of empty intersection of fixed points sets of the
given operators (that is, the inconsistent case), wherein a finite
family of relaxed firmly nonexpansive operators was considered. Since
the emptiness of this intersection was assumed there, the conditions
which guarantee the strong convergence of the GDSA algorithm had to
be assumed on the output operators produced by it.

Using the above GMSA and SM machinery, we expand the GDSA method in
the consistent case to cover the case of infinitely many input operators
in the next example, where the aforesaid conditions are assumed on
these input operators.
\begin{example}
[The General Dynamic String-Averaging method in the consistent case]\label{GDSA}We
consider a sequence $\left\{ U_{n}\right\} _{n=0}^{\infty}$ of approximately
shrinking and $\gamma_{n}$-relaxed firmly nonexpansive operators
having a common fixed point, where $U_{n}:\mathcal{H\rightarrow\mathcal{H}}$
and $\gamma_{n}\in\left(0,4\cdot3^{-1}\right]$ for each $n\in\mathbb{N}$,
such that the family $\left\{ \mathrm{Fix}U_{n}\right\} _{n=0}^{\infty}$
is boundedly regular. Let $\left\{ q_{k}\right\} _{k=0}^{\infty}$
be a bounded sequence of positive integers, let $\left\{ J_{k}\right\} _{k=0}^{\infty}$
be a sequence of finite subsets of $\mathbb{N}$, whose cardinalities
are bounded by a positive integer $p$, and let $\left\{ \varOmega_{k}\right\} _{k=0}^{\infty}$
be a sequence of nonempty sets such that $\varOmega_{k}\subset J_{k}^{\left\{ 1,2,\dots,q_{k}\right\} }$
(that is, $\varOmega_{k}$ is a finite subset of the set of functions
from $\left\{ 1,2,\dots,q_{k}\right\} $ to $J_{k}$) for each $k\in\mathbb{N}$.
For each $k\in\mathbb{N}$ and each $t\in\varOmega_{k}$, define $V_{k}\left[t\right]:=U_{t\left(q_{k}\right)}\cdots U_{t\left(2\right)}U_{t\left(1\right)}$
and let $\omega_{k}:\varOmega_{k}\rightarrow\left[\varepsilon,1\right]$
(where $\varepsilon$ is a positive number) be a function such that
$\sum_{t\in\varOmega_{k}}\omega_{k}\left(t\right)=1$. For each $k\in\mathbb{N}$,
define $T_{\left(\varOmega_{k},\omega_{k}\right)}:=\sum_{t\in\varOmega_{k}}\omega_{k}\left(t\right)V_{k}\left[t\right]$.

Define $q:=\max\left\{ q_{k}\right\} _{k=0}^{\infty}$ and 
\[
\rho:=\min\left\{ q^{-1}\inf_{n\in\mathbb{N}}\left(2-\gamma_{n}\right)\gamma_{n},1\right\} \le1.
\]
\end{example}
For each $k\in\mathbb{N}$, we say that the set $\varOmega_{k}$ is
called ``fit'' if the sequence of sets $\left\{ I_{k}\right\} _{k=0}^{\infty}$,
defined by $I_{k}:=\cup_{t\in\varOmega_{k}}\im t$, for each $k\in\mathbb{N}$,
where $\im t$, which denotes the image of the mapping $t$, satisfies
Equation \eqref{eq:-16} for all $n\in\mathbb{N}$. For example, if
$f\left(k\right)\in\cup_{t\in\varOmega_{k}}\im t$ for each $k\in\mathbb{N}$,
where $\left\{ f_{k}\right\} _{k=0}^{\infty}$ is the sequence defined
in Example \ref{Concrete=000020example}, then the set $\varOmega_{k}$
is a fit for each $k\in\mathbb{N}$.

The operators $T_{\left(\varOmega_{k},\omega_{k}\right)}$ defined
above are ``string-averaging operators'' as introduced in \cite{CEH2001}
and further studied in various forms and settings, see, for instance,
\cite{GDSA_inconsist}, Example 5.21 in \cite{BC_book}, \cite{Kong2019}
and \cite{Nikazad2016}, to name but a few. In those and other papers,
the index vector $t$ is called a ``string'', the composite operator
$V_{k}\left[t\right]$ is called ``a string operator'' and $\omega_{k}$
are called ``weight functions''.

To properly embed this in the framework of Section \eqref{sec:The-strong-convergence}
we do as follows. Define a sequence $\left\{ N_{k}\right\} _{k=0}^{\infty}$
of positive integers by $N_{k}:=\left|\varOmega_{k}\right|+1$ for
each $k\in\mathbb{N}$. Clearly, $\left\{ N_{k}\right\} _{k=0}^{\infty}$
is bounded, since $N_{k}\le p^{q}+1$ for each $k\in\mathbb{N}$.
Let $t_{k}:\left\{ 1,2,\dots,\left|\varOmega_{k}\right|\right\} \rightarrow\varOmega_{k}$
be a bijection for each $k\in\mathbb{N}$. Now, for each $k\in\mathbb{N}$
and each positive integer $n\in L_{N_{k}-1}$, define $J_{n}^{k}:=-\im t_{k}\left(n\right)$,
where $\im t_{k}\left(n\right)$ is the image of the mapping $t_{k}\left(n\right)\in\varOmega_{k}$,
$c_{k}\left(n\right):=2$, $P_{n}^{k}:=q_{k}$ and $o_{n}^{k}:=-t_{k}\left(n\right)$.
Define also $J_{N_{k}}^{k}:=\left\{ 1,2,\dots,\left|\varOmega_{k}\right|\right\} $,
$c_{k}\left(N_{k}\right):=1$, and $P_{N_{k}}^{k}:=\left|\varOmega_{k}\right|$
for each $k\in\mathbb{N}$. Define $\omega_{n}^{k}:=\omega_{k}\left(t_{k}\left(n\right)\right)$
for each positive integer $n\in L_{N_{k}-1}$.

By \eqref{eq:}, 
\begin{equation}
I_{n}^{k}=\cup_{j\in J_{n}^{k}}I_{j}^{k}=-J_{n}^{k}=\im t_{k}\left(n\right)\label{eq:-18}
\end{equation}
for each $k\in\mathbb{N}$ and each positive integer $n\in L_{N_{k}-1}=L_{\left|\varOmega_{k}\right|}$.
By \eqref{eq:-18} and \eqref{eq:}, since $t_{k}$ is a bijection
for each $k\in\mathbb{N}$, we have 
\[
I_{N_{k}}^{k}=\cup_{j\in J_{N_{k}}^{k}}I_{j}^{k}=\cup_{j\in\left\{ 1,2,\dots,\left|\varOmega_{k}\right|\right\} }I_{j}^{k}=\cup_{j\in\left\{ 1,2,\dots,\left|\varOmega_{k}\right|\right\} }\im t_{k}\left(j\right)=\cup_{t\in\varOmega_{k}}\mathrm{Im}t
\]
for each $k\in\mathbb{N}$. Applying now the GMSA procedure with respect
to the sequence $\left\{ U_{n}\right\} _{n=0}^{\infty}$ we have,
for each $k\in\mathbb{N}$ and each positive $n\in L_{N_{k}-1}$,
\begin{equation}
V_{n}^{k}=V_{k}\left[t_{k}\left(n\right)\right]\,\,\mathrm{and}\,\,V_{N_{k}}^{k}=T_{\left(\varOmega_{k},\omega_{k}\right)}.\label{eq:-14}
\end{equation}

By Corollary \ref{comp_conv_comb_firm}\textit{(ii)}, for each $k\in\mathbb{N}$,
the operator $V_{k}\left(t\right)$ is $\rho$-firmly nonexpansive
for each $t\in\varOmega_{k}$. Therefore, by \ref{comp_conv_comb_firm},
$T_{\left(\varOmega_{k},\omega_{k}\right)}$ is $\rho$-firmly nonexpansive.
Thus, assuming that the set $\varOmega_{k}$ is a fit for each $k\in\mathbb{N}$,
we obtain the GDSA iterative scheme which fits the framework of Corollaries
\ref{cor:firm_nonex} and \ref{SM_cor-1}.

We use Example \ref{MSA} to embed in this scheme the GDSA method
in the consistent case for a given finite family $\left\{ U_{n}^{\prime}\right\} _{n=1}^{m}$
of input operators. By Examples \ref{metric=000020projection} and
\ref{ex_as}, we can choose, in particular, the input operators to
be metric projections and obtain the DSAP method with infinitely many
input operators, which was described for a finite number of these
operators in \cite{DSAP}.

\bigskip{}
\textbf{Acknowledgments}. We gratefully acknowledge the enlightening
and constructive comments of the three referees which helped us improve
the paper. This work is supported by U.S. National Institutes of Health
(NIH) Grant Number R01CA266467 and by the Cooperation Program in Cancer
Research of the German Cancer Research Center (DKFZ) and Israel's
Ministry of Innovation, Science and Technology (MOST).

\subsubsection*{Data availability}

No data was used for the research described in the article.

\bibliographystyle{j_style}
\bibliography{bank_of_references}

\end{document}